\documentclass[final,3p,times,12pt]{elsarticlearxiv}
\usepackage[utf8]{inputenc}
\usepackage{hyperref}
\usepackage{verbatim}
\usepackage{mathtools}
\usepackage{color}
\usepackage[normalem]{ulem} 
\sloppy
\usepackage{amssymb,amstext,amsmath,mathabx}
\usepackage{amsthm}
\theoremstyle{definition}
\newtheorem{definition}{Definition} 
\newtheorem{remark}{Remark} 
\newtheorem{theorem}{Theorem} 
\newtheorem{corollary}{Corollary} 
 
\newcommand{\no}[1]{\widebar{#1}}

\def\C{\mathcal{C}}

\def\F{\mathcal F}
\def\G{\mathcal{G}}
\def\H{\mathcal H}
\def\I{\mathcal{I}}

\def\M{\mathcal M}
\def\P{\mathcal P}

\def\prev{\mathbb{P}}

\renewcommand\bot{\emptyset}
\journal{This preprint was accepted for publication in IJAR. The final version is available at  \href{http://dx.doi.org/10.1016/j.ijar.2020.03.001}{http://dx.doi.org/10.1016/j.ijar.2020.03.001}}
\begin{document}
\begin{frontmatter}


\title{
Probabilities of conditionals and previsions of iterated conditionals
}

\author[gs]{Giuseppe Sanfilippo\corref{cor1}\fnref{fn1}}
\address[gs]{Department of Mathematics and Computer Science, University of Palermo, Italy}
\ead{giuseppe.sanfilippo@unipa.it}
\author[ag]{Angelo Gilio\fnref{fn2}}
\address[ag]{Department of Basic and Applied Sciences for Engineering, University of Rome ``La Sapienza'', Italy}
\ead{angelo.gilio1948@gmail.com}

\author[do]{David E.\ Over}
\address[do]{Department of Psychology, Durham University,  United Kingdom}
\ead{david.over@durham.ac.uk}
\author[np]{Niki Pfeifer}
\address[np]{Department of Philosophy, University of Regensburg, Germany}
\ead{niki.pfeifer@ur.de}
\fntext[fn1]{Also affiliated with INdAM-GNAMPA, Italy}
\fntext[fn2]{Retired}
\cortext[cor1]{Corresponding author}

\begin{abstract}
We analyze selected iterated conditionals in the framework of conditional random quantities. 
We point out that it is instructive to examine Lewis’s triviality result, which shows the conditions a conditional must satisfy for its probability to be the conditional probability. In our approach, however, we avoid triviality because  the import-export principle is invalid. 
We then analyze an example of reasoning under partial knowledge where, given a conditional \emph{if $A$ then $C$} as information, the probability of $A$ should intuitively increase. We explain this intuition by making  some implicit background information explicit.
We consider several (generalized) iterated conditionals, which allow us to formalize different kinds of latent information. We verify that for these iterated conditionals the prevision is greater than or equal to the  probability of $A$. We also investigate the lower and upper bounds of the \emph{Affirmation of the Consequent} inference. We conclude our study with some remarks on the supposed ``independence'' of two conditionals, and we interpret this property as \emph{uncorrelation} between two random quantities.
\end{abstract}



\begin{keyword}
Coherence \sep  Conditional random quantities \sep Conditional probabilities and previsions \sep Conjoined  and  iterated conditionals  \sep   Affirmation of the Consequent\sep  Independence  and uncorrelation.



\end{keyword}

\end{frontmatter}

\section{Introduction and motivation}
\begin{quote}
``Even the crows on the roofs  caw about the nature of conditionals'' \\ (Callimachos quoted after \cite{kneale84}, p.\ 128)
\end{quote}
Debates about the nature of conditionals have a very long tradition in philosophy and logic, which go back at least to Diodorus Cronus and his pupil Philo 
(\cite{kneale84}). 
 Even at that early stage, the emerging debate inspired the famous Callimachos epigram. The logical tradition in the study of conditionals has recently led to the popularity of probabilistic approaches. Among the various interpretations of probability
(see, e.g., \cite{fine73}),
we adopt the subjective analysis, which is due to de Finetti 
 (\cite{definetti31,definetti37})
 and  Ramsey (\cite{ramsey94}). 
The probabilistic theory of de Finetti, based on the well-known coherence principle, has been studied and extended  by many authors (see, e.g.,
\cite{Baioletti2019,BCPV11,berti17,BeRR98,biazzo00,biazzo05,CaLS07,cosco96,coletti02,coletti15,flaminio18,gilio02,gilio12ijar,gilio16,gilio13ijar,gilio13ins,lad96,PeVi14,PeVa17}).
We recall that the coherence principle of de Finetti plays a key role in probabilistic reasoning. Indeed, it allows us to extend  any  coherent assessment, on an arbitrary family of (conditional) events, to  further (conditional) events (\emph{fundamental theorem of probability}).
De Finetti and Ramsey have held that the
probability of a natural language conditional, $P($if $A$ then $C)$, and the conditional subjective probability of $C$ given $A$,
$P(C|A)$, are closely related to each other (\cite{bennett03}). Identifying these probabilities has such
far-reaching consequences that it has simply been called \emph{the Equation} (\cite{edgington95}), but we will refer to it here
as the \emph{conditional probability hypothesis}:
\begin{quote}
(CPH)\, $P($if $A$ then $C) = P(C|A)$.
\end{quote}
A conditional that satisfies the CPH has been termed a \emph{conditional event} (\cite{definetti37}), and a \emph{probability
conditional} (\cite{adams98,lewis76}). 
 The CPH is usually adopted in conjunction with the Ramsey test (\cite{bennett03,edgington95,ramsey94}), which states that a probability judgment is made about a conditional, \emph{if $A$ then $C$}, by judging the
probability of $C$ under the supposition that $A$ holds, resulting in a degree of belief in \emph{$C$ given $A$}, the conditional
subjective probability $P(C|A)$. We will use \emph{conditional event} for such a conditional, as our work here depends
so much on de Finetti. In our  approach, a natural language conditional \emph{if $A$ then $C$} is looked at as a three-valued logical entity which coincides with the  conditional event $C|A$, formally defined in Section \ref{sect:2.1}. Then, CPH appears as a natural consequence.

One of the important implications of the CPH is that the natural language conditional cannot be identified with the
material conditional of truth functional logic, which is equivalent to $\no{A} \vee C$ (``not-$A$ or $C$'', as defined as usual) and which resembles Philo's ancient conception of conditionals. The material conditional is not a
conditional event, since $P(\no{A} \vee C) = P(C|A)$ only in extreme cases, e.g. when $C=A$, in which case $P(\no{A}\vee  C)=P(\no{A}\vee  A) =1= P(A|A) =P(C|A)$, while in general it holds that $P(C|A) \leq P(\no{A} \vee C)$.
Some philosophers (see, e.g., \cite{grice89}) and psychologists (see, e.g., \cite{johnsonlaird91}) have tried to argue that the natural language conditional is a material conditional
at its semantic core, but this cannot be right if the CPH is true (see \cite{gilio12} for a detailed study of the probability of disjunctions and the conditional probability). 

There are logical and philosophical arguments, and appeals to intuition, in support of the CPH. Consider the simple
indicative conditional:
\begin{quote}
(SH)\, If you spin the coin ($S$), it will land heads ($H$).
\end{quote}
Suppose we believe that the coin is fair. It would then seem clear that the probability of (SH), \emph{$P($if $S$ then $H)$}, is 0.5,
i.e. $P(H|S)=.5$ for a fair coin. If we suspect the coin is biased, we could spin it a large number of times $m$ and
record the number of times $n$ that it came up heads. The ratio $n/m$ would give us direct evidence about $P(H|S)$
for the next spin, and intuitively that would tell us how to judge \emph{$P($if $S$ then $H)$}. We could then make a conditional bet
on the next spin, by placing ``I bet that'' at the beginning of (SH), using $P(H|S)$ as the 
probability that we will win the bet (assuming that the bet is not called off), thus fixing the rational odds for it. Both \cite{definetti37} and \cite{ramsey26} related
conditionals to conditional bets, and it is striking that the two founders of contemporary subjective probability
theory could have independently developed views of the conditional that were so similar to each other.

Influenced by the above points, psychologists of reasoning have tested the CPH in controlled experiments, and found that
people tend to conform to it for a wide range of conditionals, from indicative conditionals (\cite{evans03,fugard11a,pfeifer13b}) and conditional bets (\cite{baratgin13a}) to counterfactuals and causal conditionals (\cite{OverCruz18,over07b,pfeifertulkkicogsci17,pfeifertulkki17}). In the opinion of some authors, the CPH
may not hold for conditionals \emph{if $A$ then $C$} when the antecedent $A$ does not raise the probability of the conditional's consequent~$C$ (\cite{douven16,skovgaard16}), but the significance of this finding is open to dispute (\cite{OverCruz18}), and the
general support for the CPH has had a major impact in psychology (\cite{elqayam13,oaksford07}), formal epistemology (\cite{pfeifer12y}), and philosophical logic (\cite{pfeiferHOR20,PS2017,SUM2018PS,PfSa19}). But
what can we say about the semantics and logic of conditionals, under CPH? 

Stalnaker used a possible worlds analysis to give the formal semantics and logical properties of a
conditional that he claimed satisfied the CPH (\cite{stalnaker68,stalnaker70}),  which is indeed sometimes termed \emph{Stalnaker's hypothesis} (\cite{douven11b}). 
Stalnaker's conditional, \emph{if $A$ then $C$}, is true in a possible world $w_x$ if and only if $C$ is true in the
closest possible world $w_y$ to $w_x$ in which $A$ is true, the closest $A$ world. The closest $A$ world to $w_x$ is determined by
similarity to $w_x$. Where $w_y$ is the closest possible $A$ world to $w_x$, $w_y$ will be the most similar $A$ world to $w_x$ in facts
and scientific laws, with the proviso that $A$ will be true in $w_y$. Every world is most similar to itself, and so when $A$
is true at $w_x$, \emph{if $A$ then $C$} is true at $w_x$ if and only if $A \wedge C$ is true at $w_x$. The Stalnaker conditional \emph{if $A$ then $C$} is
also always true or false  at a possible world in which $A$ is false, a not-$A$ world. For when $A$ is false at $w_x$, \emph{if $A$ then
$C$} will be true at $w_x$ if and only if $A \wedge C$ is true at the closest $A$ world $w_y$ to $w_x$. 

For example, suppose there are  four possible worlds, $A\wedge C$, $A\wedge \no{C}$, $\no{A}\wedge C$, and $\no{A}\wedge \no{C}$. Then, \emph{if $A$ then $C$} will be true in the $A\wedge C$ world, and false in the $A\wedge \no{C}$ world. We assume that all of these worlds have the probability of .25.
Supposing that the $A\wedge C$ world is closer to the $\no{A}$ worlds than the $A \wedge \no{C}$ world, the probability of the Stalnaker conditional \emph{if $A$ then $C$} is obtained by the sum of the probabilities over the three worlds, $P(A \wedge C)+P(\no{A} \wedge C)+P(\no{A} \wedge  \no{C})=.75$. This is because the Stalnaker conditional is then true in the three worlds: $A \wedge C$, $\no{A} \wedge C$, and $\no{A} \wedge  \no{C}$. Now we note that the conditional probability of $C|A$ is .5. But the probability of the Stalnaker conditional is .75, since it is true in three of the four worlds, and it has their probabilities. If, differently from above, we suppose that the
$A \wedge \no{C}$ world is closer to the $\no{A}$ worlds than the $A\wedge C$ world, then the probability of the Stalnaker conditional \emph{if $A$ then $C$} 
is obtained by the probability of one world only, namely
 $P(A \wedge C)=0.25$. This is because the Stalnaker conditional is now true in just the world:  $A \wedge C$.
The above examples show that  the probability of the Stalnaker conditional is in general different from  the conditional probability $P(C|A)$.  

Lewis did more than illustrate this point: he proved a stronger result. 
Lewis strictly proved that the CPH will not generally be satisfied for a conditional like Stalnaker's, or Lewis's own in
\cite{lewis73}. His proof is very instructive in revealing what the semantics of a conditional must be like for it to
satisfy the CPH, and so for it to be a conditional event. There are a number of variations of Lewis's proof, and some
further proofs have been inspired by it (\cite{bennett03,douven11b}),  but we discuss below  a version of the
proof that best illustrates the properties that a conditional must have to satisfy the CPH and so be a conditional
event. Considering the relation between a conditional \emph{if $A$ then $C$} and its consequent $C$, and using the total probability
theorem, in \cite{lewis76} it was derived:
\[P(\text{\em if } A \text{ \em  then } C) = P(C)P((\text{\em if } A \text{ \em  then  } C)|C) + P(\no{C})P((\text{\em if } A \text{ \em  then  }C)|\no{C}).
\]
Assuming the CPH, Lewis then argued that $P(($if $A$ then $C)|C)$ should be 1,\footnote{In logic under the material conditional interpretation,  it is easy to see  that  \emph{if $C$, then (if $A$ then $C$)} is logically true and, by the deduction theorem, the inference of \emph{if $A$ then $C$} from $C$ is logically valid. This inference is also known as one of the paradoxes of the material conditional, which is absurd when instantiated by natural language conditionals. For standard approaches to probability, which define the conditional probability $P(C|A)$ by $P(AC)/P(A)$---where $P(A)>0$ is assumed to avoid a fraction over zero---one obtains $P(C|A)=1$ when $P(C)=1$ (provided $P(A)>0$). Thus, in this case, this paradox of the material conditional is inherited by such standard approaches to probability. In the coherence approach, however, this paradox of the material conditional is blocked since even if $P(C)=1$, $0 \leq P(C|A) \leq 1$  (and it is not assumed that $P(A)>0$; for a proof see \cite{pfeifer13}).} and \emph{$P(($if $A$ then $C)|\no{C})$}
should be 0, with the result that \emph{$P($if $A$ then $C) = P(C)$}. This is an absurd result. As Lewis put it, \emph{$P($if $A$ then $C)$}
can only be $P(C)$ for ``trivial'' probability functions. He therefore inferred, by \emph{reductio ad absurdum}, that CPH must
be rejected. He justified his claim that, given CPH, \emph{$P(($if $A$ then $C)|C)$}  should be 1 by arguing that it should
then equal the result of learning $C$ for sure, denoted by $P_C({\dots})$, yielding $P_C($if $A$ then $C) = P_C(C|A) = 1$, and
similarly for \emph{$P(($if $A$ then $C)|\no{C})$} and learning not-$C$ for sure,  $P_{\no{C}}({\ldots})$, with \emph{$P_{\no{C}}($if $A$ then $C)= P_{\no{C}}(C|A) = 0$} following.

In an early reply, van Fraassen  (\cite{vanfraassen76}) cast doubt on Lewis's presupposition that the semantics of a natural language
conditional is independent of a person's subjective epistemic state (see also \cite{kaufmann09}). The semantics of a
Stalnaker conditional is based on a similarity relation between possible worlds, which is a (kind of subjective) qualitative comparison between 
possible worlds, and this conditional is always objectively true or false, at $A$ worlds and $\no{A}$ worlds, as we saw above. But it
has to be different for a conditional event. A conditional event \emph{if $A$ then $C$} cannot be objectively true or false in
$\no{A}$ worlds. As we will explain below in a more formal analysis, a conditional event $C|A$ is looked at as a
three-valued logical entity, with values true, or false, or void (with an associated subjective degree of belief),
according to whether $C\wedge A$ is true, or $\no{C}\wedge A$ is true, or $\no{A}$ is true, respectively. Moreover, from a numerical
point of view $C|A$ becomes a random quantity (the indicator of $C|A$) with values: 1, when $C|A$ is true;  0, when $C|A$
is false; $P(C|A)$, when $C|A$ is void. Notice that the use of $P(C|A)$ as numerical
representation of the logical value void plays a key role both in theoretical developments and in  algorithms (for
instance, the betting scheme, the penalty criterion, coherence checking and propagation). Then, (by identifying logical and
numerical aspects) the conditional event $C|A$ has the conditional probability, $P(C|A)$, as its semantic
value in $\no{A}$ cases (\cite{cruz14,gilio90,GiSa14,Jeff91,OverCruz18,pfeifer09b}). This value does, of course, depend on subjective mental states, which concern the uncertainty on $C$ (when $A$ is assumed to be true), and the effect of these on conditional probability judgments. The conditional event does not
acquire probability from $\no{A}$ worlds, because it is not objectively true at these worlds. Its value, $P(C|A)$,
in these worlds is its overall expected value, or prevision in de Finetti's terms, across all the $A$-worlds (indeed, the
value $P(C|A)$ is the result of a mental process in which $A$ is assumed to be true and $C$ may be true or false
across the $A$-worlds). The value $P(C|A)$ can be determined in a Ramsey test of \emph{if $A$ then $C$}, by using
operatively  the betting scheme or the penalty criterion of de Finetti.

Another way to justify Lewis's claim about what follows from the CPH is to use what has been called the import-export
principle (\cite{mcgee85}):
\begin{quote}
(IE)\, $P($if $B$ then $($if $A$ then $C)) = P($if $(A \wedge B)$ then $C)$.
\end{quote}
Using IE and a general form of CPH, one can prove that \emph{$P(($if $A$ then $C)|C)$} should be 1, and \emph{$P(($if $A$ then
$C)|\no{C})$} should be 0. Indeed,  assuming CPH and IE, as \emph{$P(($if $A$ then $C)|C) = P($if $C$ then $($if $A$
then $C)$}, we can infer  that \emph{$P($if $C$ then $($if $A$ then $C)) = P($if ($C \wedge A)$ then $C) = P(C|(A \wedge C)) = 1$}. Similarly under
those assumptions, it follows that \emph{$P(($if $A$ then $C)|\no{C}) = 0$}. It is therefore clear that, to avoid
triviality, IE must fail for the conditional event, and  in our formal analysis  IE is false for
this conditional (\cite{GiSa13c,GiSa14}).
 The expected values, or  previsions (denoted by the symbol $\prev$), of \emph{$($if $B$ then $($if $A$ then $C))$} and \emph{$($if
$(A \wedge B)$ then $C)$} can diverge.  Indeed, in our approach, it holds that 
\[
P(C|A)=\prev((C|A)|C)P(C)+\prev((C|A)|\no{C})P(\no{C}),
\]
which in general does not coincide with $P(C)$ because $\prev((C|A)|C)\neq P(C|AC)=1$ and $\prev((C|A)|\no{C})\neq P(C|A\no{C})=0$ (see \cite[Theorem 6]{GiSa14}). Thus Lewis's triviality is avoided (see also \cite{Over2015} for an experimental study of Lewis / Stalnaker conditionals and the import-export principle).

Douven and Dietz (\cite{douven11b}) aimed to show that there is a serious problem with the CPH without making assumptions about the
relation between conditionals and subjective semantic values. Their argument depends on their observation that a
conditional that satisfies the CPH will be probabilistically independent of its antecedent:
\begin{quote}
(IA)\, \emph{$P(A|($if $A$ then $C)) = P(A)$}.
\end{quote}
We will examine the validity of (IA) formally for the iterated conditional $A|(C|A)$ in what follows (see Section~\ref{SEC:AgCgA}), and make some points about probabilistic independence and uncorrelation in our approach (see Section~\ref{SEC:INDINC}). But at this point, we
will focus on the instructive argument of Douven and Dietz that (IA) should be rejected for the natural language
conditional, implying that CPH is false. Douven and Dietz used an example for their argument, and we will slightly simplify the conditional as follows: 
\begin{quote}
(ES)\,  If Sue passed the exam ($A$), she will go on a skiing holiday ($C$). \end{quote}
In the example, Harry sees Sue buying some skiing equipment, and this surprises him because he knows that she recently
had an exam, which he believes she is unlikely to have passed, making $P(A)$ low for him. But then Harry meets Tom, who
tells him that (ES) is likely. This information increases \emph{$P($if $A$ then $C)$} for Harry, which means that $P(C|A)$
increases for Harry, assuming that CPH holds. Douven and Dietz argue that it is intuitively right that $P(A)$ should go up
for Harry, given this information about (ES), since a high $P(A)$ explains why Sue bought the skiing equipment. But then
(IA) cannot be accepted, because it implies that $P(A)$ will be unaffected by conditioning on \emph{if $A$ then $C$}. We draw a different conclusion from this useful example. 

Among other things we will show that, by replacing   the antecedent $C|A$ by $(C|A)\wedge C$, we will reach the conclusion that the degree of belief in $A|((C|A)\wedge C)$ is greater than or equal to $P(A)$. This reasoning is a form of abductive inference (\cite{sep-abduction}, see also \cite{DuGK08}), which is related to the classical ``fallacy'' of \emph{Affirmation of  the Consequent} (AC): from {\em if $A$ then $C$} and $C$ infer $A$. 
Under the material conditional interpretation of conditionals, AC is not logically valid. Therefore, it is classically called a fallacy. However, 
 we will compute the lower and upper bounds for the conclusion of AC, showing when AC can be a strong, though not valid, inference for reasoning under partial knowledge. We will expand our analysis by considering iterated conditionals. We will use this analysis to take account of implicit information that can be present in particular contexts. This added information will explain the intuition that the degree of belief in $A$ should sometimes increase given {\em if $A$ then $C$}. We will also make some comments about uncorrelation and probabilistic independence, pointing out in particular that the event $A$ and the conditional {\em if $A$ then $C$} are uncorrelated, but not probabilistically independent, and we will explain why this distinction does not lead to counterintuitive results.

The outline of the paper is as follows. In Section~\ref{SEC:PRELIM} 
 we recall some basic notions and results on  coherence,  logical operations among conditional events, and iterated conditionals.
In Section~\ref{SEC:DgCgA} we study the antecedent-nested conditional 
\emph{if ($C$ when $A$), then $D$}. 
In Section~\ref{SEC:AgCgA} we analyze the particular case where $D=A$, by studying  the antecedent-nested conditional 
\emph{if ($C$ when $A$), then $A$} and by  considering selected related iterated conditionals.  In addition, we discuss and apply our results to the above mentioned Harry and Sue example (ES). 
In Section~\ref{SEC:AgCgAandC} we consider the generalized iterated conditional $A|((C|A)\wedge C)$ 
which allows 
to make some latent information explicit, such that the degree of belief in $A$ increases.
In Section~\ref{SECT:AC} we refine the study of the lower and upper bounds for the AC rule.
In Section~\ref{SEC:FURTHERITER} we examine selected further cases where, based on suitable generalized iterated conditionals,  the degree of belief in $A$ increases.
In Section~\ref{SEC:INDINC} 
we discuss and correctly interpret a product formula for the conjunction of two conditionals using the notion of uncorrelation, rather than probabilistic independence, between two random quantities.
In Section~\ref{SEC:CONCLUSIONS} we  give a summary of the paper, by adding  some final comments.

\section{Preliminary notions and results}
\label{SEC:PRELIM}
In this section we recall some basic notions and results  concerning coherence (see, e.g., \cite{biazzo05,BiGS12,CaLS07,coletti02,PeVa17}),  logical operations among conditional events, and iterated conditionals (see \cite{GiSa13c,GiSa13a,GiSa14,GiSa17,GiSa19,SPOG18}).

\subsection{Conditional events and coherent conditional probability assessments}
\label{sect:2.1}
In real world applications, we very often have to manage uncertainty about the facts, which are described by (non-ambiguous) logical propositions.  For dealing with unknown facts we use  the notion of event. In formal terms, an event $A$  is a two-valued logical entity which can be \emph{true}, or \emph{false}.
The indicator of $A$, denoted by the same symbol, is  1, or 0, according to  whether $A$ is true, or false. The sure event and the impossible event are denoted by $\Omega$ and  $\emptyset$, respectively. 
Given two events $A$ and $B$,  we denote by $A\land B$, or simply by $AB$, (resp., $A \vee B$) the logical conjunction (resp., the  logical disjunction).   The  negation of $A$ is denoted by $\no{A}$.  We simply write $A \subseteq B$ to denote that $A$ logically implies $B$, that is  $A\no{B}=\emptyset$. 
We recall that  $n$ events $A_1,\ldots,A_n$ are logically independent when the number $m$ of constituents, or possible worlds, generated by them  is $2^n$ (in general  $m\leq 2^n$).

Given two events $A,H$,
with $H \neq \emptyset$, the conditional event $A|H$
is defined as a three-valued logical entity which is \emph{true}, or
\emph{false}, or \emph{void}, according to whether $AH$ is true, or $\no{A}H$
is true, or $\no{H}$ is true, respectively.
The notion of logical inclusion among events has been generalized to conditional events by Goodman and Nguyen in \cite{GoNg88} (see also \cite{gilio13ins} for some related results). 
Given two conditional events 
$A|H$ and $B|K$, we say that $A|H$ implies $B|K$, denoted by $A|H \subseteq B|K$, iff $AH$ {\em true} implies $BK$ {\em true} and $\no{B}K$ {\em true} implies $\no{A}H$ {\em true}; i.e., iff $AH \subseteq BK$ and $\no{B}K \subseteq \no{A}H$.
In the subjective approach to probability based on the betting scheme, to assess  $P(A|H)=x$ means that, for every real number $s$,  you are willing to pay (resp., to receive) 
an amount $sx$ and to receive (resp., to pay) $s$, or 0, or $sx$, according
to whether $AH$ is true, or $\widebar{A}H$ is true, or $\widebar{H}$
is true (bet called off), respectively. The random gain, which is  the difference between the (random) amount that you receive and the amount that you pay, is 
\[
G=(sAH+0\widebar{A}H+sx\widebar{H})-sx=
sAH+sx(1-H)-sx=
sH(A-x).
\]
In what follows, we assume that the probabilistic assessments are coherent (see Definition \ref{COER-RQ}). In particular, the coherence of any assessment $x=P(A|H)$ is equivalent to $min \; \G_{H} \; \leq 0 \leq max \;
\G_{H}$, $\forall \, s$, where $\G_{H}$ is the set of values of $G$ restricted to $H$. Then, the set $\Pi$ of coherent assessments $x$ on $A|H$ is: $(i)$ $\Pi=[0,1]$, when $\emptyset \neq AH \neq H$;
$(ii)$ $\Pi=\{0\}$, when $ AH=\emptyset$; $(iii)$ $\Pi=\{1\}$, when $ AH=H$. 
 In numerical terms, once  $x=P(A|H)$ is assessed by the betting scheme, the indicator of $A|H$, denoted by the same symbol, is defined as  $1$, or $0$, or $x$, 
according
to whether $AH$ is true, or $\widebar{A}H$ is true, or $\widebar{H}$
is true. 
Then, by setting  $P(A|H)=x$, 
\begin{equation}\label{EQ:AgH}
A|H=AH+x \no{H}=\left\{\begin{array}{ll}
1, &\mbox{if $AH$ is true,}\\
0, &\mbox{if $\no{A}H$ is true,}\\
x, &\mbox{if $\no{H}$ is true,}\\
\end{array}
\right.
\end{equation} and when you pay $sx$ the amount that you receive is $s\,A|H=s(AH+x\no{H})$, with a random gain given by  $G=sH(A-x)=s(A|H-x)$.  In particular, when $s=1$, you pay $x$ and receive $A|H$.
Notice that,  when $H\subseteq A$ (i.e., $AH=H$), by coherence $P(A|H)=1$ and hence $A|H=H+\no{H}=1$.  The negation of a conditional event $A|H$ is defined by  $\no{A}|H$, which coincides with  $1-A|H$.  
\begin{remark}\label{REM:AgH}
We point out that  the definition of (the indicator of) $A|H$ is not circular because, by the betting scheme, 
the three-valued numerical entity  $A|H$ is defined once the value $x=P(A|H)$ is assessed. 
Moreover, denoting prevision by the symbol $\prev$, the value $x$ coincides with the conditional prevision $\prev(A|H)$ because
\begin{equation*}\label{EQ:prevAgH}
\prev(A|H)=\prev(AH+x\no{H})=P(AH)+xP(\no{H})=P(A|H)P(H)+xP(\no{H})=xP(H)+xP(\no{H})=x.
\end{equation*}
We recall that 
a systematic study of the third value
  of a conditional event has been developed in \cite{CoSc99a}, where   it has been shown that  this value  satisfies all the properties of a conditional probability. In addition,  extensions to conditional possibility  and to general conditional measures of uncertainty have been given in \cite{BMCM02,CoSc01}. The semantics of our approach, however, is probabilistic, and hence the third value for the indicator of $A|H$ is $P(A|H)$. Notice that, given two conditional events $A|H$ and $B|K$, it makes sense, for their indicators,  to check the inequality $A|H\leq B|K$. For instance, the inequality holds when the conditional events satisfy the Goodman and Nguyen relation, i.e. $A|H\subseteq B|K$. Indeed, in this case   coherence requires that $P(A|H)\leq P(B|K)$ and hence $A|H\leq B|K$ (see also \cite[Theorem 6]{gilio13ins}). 
\end{remark}
Given a probability function $P$ defined on an arbitrary family $\mathcal{K}$ of 
conditional events, consider a finite subfamily $\F = \{E_1|H_1, \ldots,E_n|H_n\} \subseteq \mathcal{K}$ and the vector
$\P=(p_1,\ldots, p_n)$, where $p_i = P(E_i|H_i)$ is the
assessed probability for the conditional event  $E_i|H_i$, $i\in \{1,\ldots,n\}$.
With the pair $(\F,\P)$ we associate the random gain $G =
\sum_{i=1}^ns_iH_i(E_i - p_i)=\sum_{i=1}^ns_i(E_i|H_i - p_i)$. We denote by $\G_{\mathcal{H}_n}$ the set of values of $G$ restricted to $\H_n= H_1 \vee \cdots \vee H_n$, i.e., the set of values of $G$ when $\H_n$ is true. Then, we
recall below the notion of coherence in the context of the  {\em  betting scheme}.
\begin{definition}\label{COER-EV}{\rm
		The function $P$ defined on $\mathcal{K}$ is coherent if and only if, $\forall n
		\geq 1$, $\forall \, s_1, \ldots,
		s_n$, $\forall \, \F=\{E_1|H_1, \ldots,E_n|H_n\} \subseteq \mathcal{K}$, it holds that: $min \; \G_{\mathcal{H}_n} \; \leq 0 \leq max \;
		\G_{\mathcal{H}_n}$. }
\end{definition}
As shown by Definition \ref{COER-EV}, the function $P$ is coherent if and only if, in any finite combination of $n$ bets, 
it cannot happen that the values in the set $\G_{\mathcal{H}_{n}}$  are all
positive, or all negative (\emph{no Dutch Book}).  
In other words, in any finite combination of $n$ bets, 
after discarding the case where  all the bets are called off, the values of the random gain are  neither all positive nor all negative.
 
\subsection{Conditional random quantities and coherent conditional prevision assessments}
More in general, if $A$ is replaced by  a  random quantity $X$, by recalling that $\prev$ is the symbol of prevision,  to assess $\prev(X|H)=\mu$ means that, for every real number $s$,  you are willing to pay 
an amount $s\mu$ and to receive  $sX$, or $s\mu$, according
to whether $H$ is true, or $\widebar{H}$
is true (the bet is called off), respectively. Of course, when $X$ is an event $A$, it holds that $\prev(X|H)=P(A|H)$.
The random gain is $G=s(XH+\mu \widebar{H})-s\mu=
sH(X-\mu)$. 
By following the approach given in \cite{CoSc99a,GOPS16,GiSa13c,GiSa13a,GiSa14,lad96}, once  a coherent assessment $\mu=\prev(X|H)$ is specified,  the conditional random quantity $X|H$ (is not looked at as the restriction to $H$, but)
is defined as  $X$, or $\mu$,
according
to whether $H$ is true, or $\widebar{H}$
is true; that is,  
\begin{equation}\label{EQ:XgH}
X|H=XH+\mu \widebar{H}.
\end{equation}
As shown in (\ref{EQ:XgH}), given any random quantity $X$ and any event $H\neq \emptyset$, in the framework of subjective probability, in order to define $X|H$ we just need to specify the value $\mu$ of the conditional prevision $\prev(X|H)$.  Indeed, once  the value $\mu$ is specified, the object $X|H$ is (subjectively) determined.
We observe that (\ref{EQ:XgH}) is consistent because
\begin{equation}\label{EQ:PREVXgH}
\prev(XH+\mu\no{H})=\prev(XH)+\mu P(\no{H})=\prev(X|H)P(H)+\mu P(\no{H})=\mu P(H)+\mu P(\no{H})=\mu.
\end{equation}
By (\ref{EQ:XgH}), the random gain associated with a bet on $X|H$ can be represented as $G=s(X|H-\mu)$, that is $G$ is the difference between what you 
receive, $s X|H$, and what you pay, $s \mu$.
In what follows,
for any given conditional random quantity $X|H$, we assume that, when $H$ is true, the set of possible values of $X$ is finite.
 In this case we say 
that $X|H$ is a finite conditional random quantity.  
Denoting by $\mathcal{X}_H=\{x_{1}, \ldots,x_{r}\}$ the set of possible values  of $X$ restricted to $H$ and by setting $A_{j} = (X = x_{j})$, $j=1,\ldots, r$, it holds that 
$\bigvee_{j=1}^r A_j=H$ and
\begin{equation}
X|H=XH+\mu \widebar{H}=x_1A_1+\cdots +x_rA_r+\mu\widebar{H}.
\end{equation}
Given a prevision function $\prev$ defined on an arbitrary family $\mathcal{K}$ of finite
conditional random quantities, consider a finite subfamily $\F = \{X_1|H_1, \ldots,X_n|H_n\} \subseteq \mathcal{K}$ and the vector
$\M=(\mu_1,\ldots, \mu_n)$, where $\mu_i = \prev(X_i|H_i)$ is the
assessed prevision for the conditional random quantity $X_i|H_i$, $i\in \{1,\ldots,n\}$.
With the pair $(\F,\M)$ we associate the random gain $G =
\sum_{i=1}^ns_iH_i(X_i - \mu_i)=\sum_{i=1}^ns_i(X_i|H_i - \mu_i)$. We  denote by $\G_{\mathcal{H}_n}$ the set of values of $G$ restricted to $\H_n= H_1 \vee \cdots \vee H_n$. 
Then,  the notion of coherence is defined as below.
\begin{definition}\label{COER-RQ}{\rm
		The function $\prev$ defined on $\mathcal{K}$ is coherent if and only if, $\forall n
		\geq 1$, $\forall \, s_1, \ldots,
		s_n$, $\forall \, \F=\{X_1|H_1, \ldots,X_n|H_n\} \subseteq \mathcal{K}$,   it holds that: $min \; \G_{\mathcal{H}_n} \; \leq 0 \leq max \;
		\G_{\mathcal{H}_n}$. }
\end{definition}
In particular, by Definition \ref{COER-RQ}, the coherence of a prevision assessment $\prev(X|H)=\mu$ is equivalent to $ \min \mathcal{X}_{H} \leq \mu \leq \max \mathcal{X}_{H}$, where we recall that $\mathcal{X}_{H}$ is the set of valued of $X$ when $H$ is true.

Given a family $\F = \{X_1|H_1,\ldots,X_n|H_n\}$, for each $i \in \{1,\ldots,n\}$ we denote by $\{x_{i1}, \ldots,x_{ir_i}\}$ the set of possible values  of $X_i$ when  $H_i$ is true; then,  we set $A_{ij} = (X_i = x_{ij})$,   $i=1,\ldots,n$, $j = 1, \ldots, r_i$. We set $C_0 = \widebar{H}_1 \cdots \widebar{H}_n$ (it may be $C_0 = \emptyset$) and  we denote by $C_1, \ldots, C_m$ the constituents
contained in $\H_n=H_1\vee \cdots \vee H_n$. Hence
$\bigwedge_{i=1}^n(A_{i1} \vee \cdots \vee A_{ir_i} \vee
\widebar{H}_i) = \bigvee_{h = 0}^m C_h$.
With each $C_h,\, h \in \{1,\ldots,m\}$, we associate a vector
$Q_h=(q_{h1},\ldots,q_{hn})$, where $q_{hi}=x_{ij}$ if $C_h \subseteq
A_{ij},\, j=1,\ldots,r_i$, while $q_{hi}=\mu_i$ if $C_h \subseteq \widebar{H}_i$;
with $C_0$ we associate  $Q_0=\M = (\mu_1,\ldots,\mu_n)$.
Denoting by $\I$ the convex hull of $Q_1, \ldots, Q_m$, the condition  $\M\in \I$ amounts to the existence of a vector $(\lambda_1,\ldots,\lambda_m)$ such that:
$ \sum_{h=1}^m \lambda_h Q_h = \M \,,\; \sum_{h=1}^m \lambda_h
= 1 \,,\; \lambda_h \geq 0 \,,\; \forall \, h$; in other words, $\M\in \I$ is equivalent to the solvability of the system $(\Sigma)$, associated with  $(\F,\M)$,
\begin{equation}\label{SYST-SIGMA}
(\Sigma) \quad
\begin{array}{ll}
\sum_{h=1}^m \lambda_h q_{hi} =
\mu_i \,,\; i \in\{1,\ldots,n\} \,, 
\sum_{h=1}^m \lambda_h = 1,\;\;\lambda_h \geq 0 \,,\;  \,h \in\{1,\ldots,m\}\,.
\end{array}
\end{equation}
Given the assessment $\M =(\mu_1,\ldots,\mu_n)$ on  $\F =
\{X_1|H_1,\ldots,X_n|H_n\}$, let $S$ be the set of solutions $\Lambda = (\lambda_1, \ldots,\lambda_m)$ of system $(\Sigma)$.   
We point out that the solvability of  system $(\Sigma)$  is a necessary (but not sufficient) condition for coherence of $\M$ on $\F$. When $(\Sigma)$ is solvable, that is  $S \neq \emptyset$, we define:
\begin{equation}\label{EQ:I0}
\begin{array}{ll}
I_0 = \{i : \max_{\Lambda \in S}  \sum_{h:C_h\subseteq H_i}\lambda_h= 0\},\;
\F_0 = \{X_i|H_i \,, i \in I_0\},\;\;  \M_0 = (\mu_i ,\, i \in I_0)\,.
\end{array}
\end{equation}
For what concerns the probabilistic meaning of $I_0$, it holds that  $i\in I_0$ if and only if the (unique) coherent extension of $\M$ to $H_i|\H_n$ is zero.
Then, the following theorem can be proved  (\cite[Theorem 3]{BiGS08}):
\begin{theorem}[\emph{Operative characterization of coherence}]
	\label{CNES-PREV-I_0-INT}{\rm 
		A conditional prevision assessment ${\M} = (\mu_1,\ldots,\mu_n)$ on
		the family $\F = \{X_1|H_1,\ldots,X_n|H_n\}$ is coherent if
		and only if the following conditions are satisfied: \\
		(i) the system $(\Sigma)$ defined in (\ref{SYST-SIGMA}) is solvable; (ii) if $I_0 \neq \emptyset$, then $\M_0$ is coherent. }
\end{theorem}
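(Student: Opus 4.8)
The plan is to prove the two implications separately, using the geometric reformulation already established in the excerpt---namely that solvability of $(\Sigma)$ is equivalent to $\M \in \I = \mathrm{conv}(Q_1,\dots,Q_m)$---together with the betting definition of coherence (Definition~\ref{COER-RQ}). Throughout I write $g_h = \sum_{i=1}^n s_i(q_{hi}-\mu_i)$ for the value of the random gain $G$ on the constituent $C_h$, so that $\G_{\H_n} = \{g_1,\dots,g_m\}$ and coherence on the full family amounts to $\min_h g_h \le 0 \le \max_h g_h$ for every stake vector $(s_1,\dots,s_n)$.

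For \emph{necessity}, assume $\M$ is coherent. Taking $\F$ itself as the subfamily in Definition~\ref{COER-RQ}, no choice of stakes can make all the $g_h$ strictly positive or all strictly negative. By a standard separating-hyperplane argument (equivalently, Gordan's theorem or linear-programming duality), the non-existence of a vector $(s_1,\dots,s_n)$ with $\sum_i s_i(q_{hi}-\mu_i) > 0$ for all $h$ is exactly the condition $\M \in \I$, which by the discussion preceding \eqref{SYST-SIGMA} is the solvability of $(\Sigma)$; this gives~(i). Condition~(ii) then follows from the heredity of coherence: since Definition~\ref{COER-RQ} quantifies over \emph{all} finite subfamilies, any Dutch book against $\M_0$ on a subfamily of $\F_0$ is, upon setting the remaining stakes to $0$, already a Dutch book against a subfamily of $\F$, contradicting coherence of $\M$. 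Hence $\M_0$ is coherent.

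For \emph{sufficiency} I would argue by induction on $n$. First note the induction is well founded: if every $H_i$ received zero mass in all solutions, i.e.\ $I_0 = \{1,\dots,n\}$, then $\sum_{h:C_h\subseteq H_i}\lambda_h = 0$ for all $i$ would force $\lambda_h = 0$ for every $h$ (each $C_h$ implies some $H_i$), contradicting $\sum_h \lambda_h = 1$; thus $I_0 \subsetneq \{1,\dots,n\}$ and $|\F_0| < n$. Assuming (i) and (ii), the first step is to average finitely many solutions in $S$ to obtain a single solution $\Lambda^\ast$ with $\sum_{h:C_h\subseteq H_i}\lambda_h^\ast > 0$ precisely for the indices $i \notin I_0$ (and $=0$ for $i \in I_0$); this $\Lambda^\ast$ realizes the ``first layer''. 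Given any stake vector, the identity $\sum_h \lambda_h^\ast g_h = \sum_i s_i\big(\sum_h \lambda_h^\ast q_{hi} - \mu_i\big) = 0$ shows that the gain cannot be one-signed on the constituents carrying positive $\lambda^\ast$-mass, which settles every subfamily that genuinely involves a first-layer conditioning event.

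The main obstacle, and the heart of the proof, is gluing the two layers for stake vectors that also act on the zero-layer events $\{H_i : i \in I_0\}$, on whose constituents $\lambda_h^\ast = 0$, so the averaging identity gives no control. Here I would invoke the inductive hypothesis: by (ii) the reduced assessment $\M_0$ on $\F_0$ (with $|\F_0| < n$) is coherent, hence admits no Dutch book on $\bigvee_{i\in I_0} H_i$. The probabilistic meaning of $I_0$ recorded in the excerpt---$i \in I_0$ iff the coherent extension of $\M$ to $H_i|\H_n$ is zero---is what legitimizes this descent: the zero-layer events are exactly those conditioned on a top-level null event, so their coherence must be re-examined one layer down. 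To combine the layers for a mixed stake vector I would use a perturbation argument: because the first-layer constituents carry strictly positive $\Lambda^\ast$-mass, adding a sufficiently small multiple $\epsilon>0$ of the zero-layer neutralizing strategy supplied by the coherence of $\M_0$ cannot overturn the sign balance established on the first layer, while $\epsilon>0$ handles the zero layer itself. Verifying that a single $\epsilon$ works simultaneously on both layers, so that the resulting gain is never uniformly positive or uniformly negative on $\H_n$ (nor on any subfamily), is the delicate point that completes the induction.
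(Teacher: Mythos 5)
First, a point of comparison: the paper does not prove this theorem at all --- it is recalled verbatim from \cite{BiGS08} (Theorem 3 there) --- so there is no in-paper proof to measure you against; I assess your argument against the standard proof in that reference. Your necessity direction is correct: the separation (Gordan) argument for (i), and heredity of the no-Dutch-book condition over subfamilies for (ii), are exactly right, as is the observation that solvability of $(\Sigma)$ forces $I_0\subsetneq\{1,\ldots,n\}$ and the construction of $\Lambda^\ast$ by averaging solutions.

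The gap is in sufficiency, and it is twofold. The obstacle you call ``the heart of the proof'' --- gluing the layers for a mixed stake vector --- does not exist. The identity $\sum_h\lambda_h^\ast g_h=0$ holds for an \emph{arbitrary} stake vector, including one betting on the zero-layer events, since it only uses the equations $\sum_h\lambda_h^\ast q_{hi}=\mu_i$ for \emph{all} $i$. Moreover, for a subfamily with index set $J$ one has $g_h=0$ for every constituent $C_h\subseteq\H_n$ with $C_h\not\subseteq\bigvee_{i\in J}H_i$ (there $q_{hi}=\mu_i$ for all $i\in J$), hence also $\sum_{h:\,C_h\subseteq\bigvee_{i\in J}H_i}\lambda_h^\ast g_h=0$. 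Whenever $J\not\subseteq I_0$ this restricted sum has strictly positive total $\Lambda^\ast$-mass and immediately gives $\min\G\leq 0\leq\max\G$ on $\bigvee_{i\in J}H_i$, \emph{regardless} of what stakes are placed on the indices in $J\cap I_0$: one only needs some nonpositive and some nonnegative value among the positive-mass constituents, and those are already inside $\bigvee_{i\in J}H_i$. The correct dichotomy is therefore on the subfamily, not on the stake vector: either $J\not\subseteq I_0$ (settled as above) or $J\subseteq I_0$, in which case the bet lives entirely on $\bigvee_{i\in I_0}H_i$ and the required sign condition is literally an instance of hypothesis (ii). No induction on $n$ and no combination step are needed.

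Second, the device you propose for the (illusory) combination step would not work even if it were needed: coherence of $\M_0$ is a universally quantified no-Dutch-book property and does not ``supply a neutralizing strategy'' that one could scale by $\epsilon$ and add to the first-layer stakes. What must be verified is a sign condition on a \emph{given} gain, not the existence of a counter-strategy, so there is nothing to perturb. Since you yourself leave this step unverified, the sufficiency direction is incomplete as written; it becomes complete, and shorter, once the subfamily case split replaces the perturbation idea.
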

Coherence can be characterized in terms of proper scoring rules (\cite{BiGS12,GiSa11a}), which can be related  to the notion of entropy in information theory (\cite{LS18,LaSA12,LSA15,LaSA18}).

 The  result below (\cite[Theorem 4]{GiSa14}) shows that
if  
 two conditional random quantities $X|H$, $Y|K$ coincide when  $H\vee K$ is true, then $X|H$ and $ Y|K$ also coincide when  $H\vee K$ is false, and hence $X|H$ coincides with $Y|K$ in all cases.
\begin{theorem}\label{THM:EQ-CRQ}{\rm Given any events $H\neq \emptyset$ and  $K\neq \emptyset$, and any random quantities $X$ and $Y$, let $\Pi$ be the set of the coherent prevision assessments $\prev(X|H)=\mu$ and $\prev(Y|K)=\nu$. \\
		$(i)$ Assume that, for every $(\mu,\nu)\in \Pi$,  $X|H=Y|K$  when  $H\vee K$ is true; then   $\mu=\nu$ for every $(\mu,\nu)\in \Pi$. \\
		$(ii)$ For every $(\mu,\nu)\
		\in \Pi$,   $X|H=Y|K$  when  $H\vee K$ is true  if and only if $X|H=Y|K$.
}\end{theorem}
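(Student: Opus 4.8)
The plan is to prove both parts through the geometric (constituents / convex hull) characterization of coherence recalled just before Theorem~\ref{CNES-PREV-I_0-INT}, rather than through a betting argument. First I would fix a coherent pair $(\mu,\nu)\in\Pi$ and apply the decompositions $X|H=XH+\mu\no{H}$ and $Y|K=YK+\nu\no{K}$ from (\ref{EQ:XgH}) to the family $\F=\{X|H,Y|K\}$. Writing $C_0=\no{H}\,\no{K}$ and letting $C_1,\dots,C_m$ be the constituents contained in $H\vee K$, each $C_h$ (with $h\in\{1,\dots,m\}$) carries a value vector $Q_h=(q_{h1},q_{h2})$, where $q_{h1}$ and $q_{h2}$ are the values of $X|H$ and $Y|K$ on $C_h$. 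Concretely, on a constituent of $HK$ one has $Q_h=(x_j,y_k)$, on a constituent of $H\no{K}$ one has $Q_h=(x_j,\nu)$, and on a constituent of $\no{H}K$ one has $Q_h=(\mu,y_k)$; finally $Q_0=(\mu,\nu)$. Since $H\neq\emptyset$ we have $H\vee K\neq\emptyset$, so $m\geq1$ and the convex hull $\I$ of $Q_1,\dots,Q_m$ is nonempty.

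The decisive observation is that the hypothesis \emph{$X|H=Y|K$ when $H\vee K$ is true} says exactly that $q_{h1}=q_{h2}$ for every $h\in\{1,\dots,m\}$, i.e.\ that every $Q_h$ with $h\geq1$ lies on the diagonal $\Delta=\{(t,t):t\in\mathbb{R}\}$. This is genuinely a statement about the actual values taken by $X$ and $Y$ on each constituent: e.g.\ on a constituent of $H\no{K}$ it forces the value $x_j$ of $X$ to equal $\nu$, so $Q_h=(\nu,\nu)\in\Delta$, and symmetrically on $\no{H}K$ it forces $Q_h=(\mu,\mu)\in\Delta$. Since $(\mu,\nu)$ is coherent, the necessary condition for coherence gives that system $(\Sigma)$ is solvable, which is equivalent to $\M=(\mu,\nu)\in\I$. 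As $\Delta$ is convex and contains every $Q_h$ with $h\geq1$, it contains $\I$; hence $\M\in\Delta$, that is $\mu=\nu$. Because this argument is carried out for an arbitrary coherent pair, it yields the conclusion of part $(i)$ for every $(\mu,\nu)\in\Pi$.

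Part $(ii)$ then follows almost immediately, and per pair. The implication from $X|H=Y|K$ to its restriction to $H\vee K$ is trivial. For the converse, I would fix $(\mu,\nu)\in\Pi$ with $X|H=Y|K$ on $H\vee K$; the argument above gives $\mu=\nu$, and on the remaining constituent $C_0=\no{H}\,\no{K}$ one has $X|H=\mu=\nu=Y|K$. Hence $X|H$ and $Y|K$ coincide on every constituent, that is $X|H=Y|K$ in all cases.

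I expect the only delicate point to be the bookkeeping in the second paragraph: the value vectors on the constituents of $H\no{K}$ and $\no{H}K$ themselves involve $\mu$ and $\nu$, so one must verify that the coincidence hypothesis still pins each such $Q_h$ onto the diagonal before invoking convexity. Once this is checked, the convex-hull inclusion $\M\in\I\subseteq\Delta$ does all the work, and no appeal to a specific extension of the probability assessment (nor any positivity assumption such as $P(H\vee K)>0$) is needed.
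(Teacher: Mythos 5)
Your proof is correct. Note that the paper itself does not prove Theorem~\ref{THM:EQ-CRQ}; it recalls it from \cite[Theorem 4]{GiSa14}, where the standard argument is the betting-scheme one: taking stakes $s_1=1$, $s_2=-1$, the random gain $G=(X|H-\mu)-(Y|K-\nu)$ reduces, when $H\vee K$ is true, to the constant $\nu-\mu$, and the coherence condition $\min \G_{H\vee K}\leq 0\leq \max \G_{H\vee K}$ then forces $\mu=\nu$. Your route through the convex-hull characterization is the geometric dual of exactly this Dutch-book argument: the condition $\M\in\I$ is the paper's stated necessary condition for coherence, and your key step --- checking that the coincidence hypothesis pins every $Q_h$ with $h\geq 1$ onto the diagonal $\Delta$, including on the constituents of $H\no{K}$ and $\no{H}K$ where it forces $x_j=\nu$ and $y_k=\mu$ respectively --- correctly yields $\I\subseteq\Delta$ and hence $\mu=\nu$. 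Both arguments use only the necessity half of the characterization of coherence, require no positivity assumption on $P(H\vee K)$, and rely on the finiteness of the sets of values of $X$ on $H$ and of $Y$ on $K$ assumed in the paper. The betting version is a one-line computation with no case analysis over $HK$, $H\no{K}$, $\no{H}K$, while yours makes explicit where each constituent's value vector lands; your per-pair treatment of part $(ii)$, reducing it to the agreement on $C_0=\no{H}\,\no{K}$ via $\mu=\nu$, is the same in both approaches and is complete.
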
	
\begin{remark}\label{REM:INEQ-CRQ}
Theorem \ref{THM:EQ-CRQ} has been generalized in \cite[Theorem 6]{GiSa19} by replacing the symbol ``$=$'' by ``$\leq$'' in statements  $(i)$ and $(ii)$. In other words,
if  $X|H\leq Y|K$ when  $H\vee K$ is true, then $\prev(X|H)\leq  \prev(Y|K)$ and hence  $X|H\leq  Y|K$ in all cases.	
\end{remark}
\subsection{Conjoined  and  iterated conditionals}
We recall now  the notions of conjoined (e.g.,  \emph{$($if $H$ then $A)$ and $($if $K$ then $B)$})  and iterated conditionals (e.g.,  \emph{if $($if $H$ then $A)$, then $($if $K$ then $B)$}), which were introduced  in the framework of conditional  random quantities (\cite{GiSa13c,GiSa13a,GiSa14}). 
Given a coherent probability assessment $(x,y)$ on $\{A|H,B|K\}$ we consider the random quantity $AHBK+x\no{H}BK+y\no{K}AH$ and we set $\prev[(AHBK+x\no{H}BK+y\no{K}AH)|(H\vee K)]=z$. Then  we define the  conjunction $(A|H)\wedge(B|K)$ as follows:
\begin{definition}\label{CONJUNCTION}{\rm Given a coherent prevision assessment 
$P(A|H)=x$, $P(B|K)=y$, and $\prev[(AHBK+x\no{H}BK+y\no{K}AH)|(H\vee K)]=z$, the conjunction
 $(A|H)\wedge(B|K)$ is the conditional random quantity  defined as
\begin{equation}\label{EQ:CONJUNCTION}
\begin{array}{ll}
(A|H)\wedge(B|K)=(AHBK+x\no{H}BK+y\no{K}AH)|(H\vee K) =\\
=(AHBK+x\no{H}BK+y\no{K}AH)(H\vee K)+z\no{H}\,\no{K}=
\left\{\begin{array}{ll}
1, &\mbox{if $AHBK$ is true,}\\
0, &\mbox{if $\no{A}H\vee \no{B}K$ is true,}\\
x, &\mbox{if $\no{H}BK$ is true,}\\
y, &\mbox{if $AH\no{K}$ is true,}\\
z, &\mbox{if $\no{H}\no{K}$ is true}.
\end{array}
\right.
\end{array}
\end{equation}
}\end{definition}
Notice that in  (\ref{EQ:CONJUNCTION})  the conjunction is represented as $X|H$ is in (\ref{EQ:XgH}) and, once  the (coherent) assessment $(x,y,z)$ is given, the  
conjunction $(A|H)\wedge (B|K)$
is (subjectively) determined. Moreover,  by (\ref{EQ:PREVXgH}),
it holds that $\prev[(A|H)\wedge(B|K)]=z$.  
 We recall that, in betting terms,  $z$ represents the amount you agree to pay, with the proviso that you will receive the quantity
\begin{equation}
(A|H)\wedge(B|K)=AHBK+x\no{H}BK+y\no{K}AH+z\no{H}\no{K},
\end{equation}
 which assumes one of the following values:
\begin{itemize}
	\item $1$, if both conditional events are true;
	\item  $0$, if at least one of the conditional events is false; 
	\item  the probability of  the conditional event that is void, if one conditional event is void  and the other one is true;
	\item  $z$ (the amount that you payed), if both 
	conditional events are void.
\end{itemize}
The  result  below  shows that Fr\'echet-Hoeffding bounds still hold for the conjunction of two conditional events (\cite[Theorem~7]{GiSa14}).
\begin{theorem}\label{THM:FRECHET}{\rm
		 Given any coherent assessment $(x,y)$ on $\{A|H, B|K\}$, with $A,H,B,K$ logically independent, and with $H \neq \bot, K \neq \bot$, the extension $z = \mathbb{P}[(A|H) \wedge (B|K)]$ is coherent if and only if   the following  Fr\'echet-Hoeffding bounds are satisfied:
	\begin{equation}\label{LOW-UPPER}
\max\{x+y-1,0\} =\;z' \leq \; z \; \leq \; z''=\; \min\{x,y\} \,.
	\end{equation}
}\end{theorem}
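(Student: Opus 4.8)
The plan is to treat the conjunction $(A|H)\wedge(B|K)$ as a single conditional random quantity conditioned on $H\vee K$, with $\prev[(A|H)\wedge(B|K)]=z$, as displayed in \eqref{EQ:CONJUNCTION}, and to prove the biconditional by splitting it into \emph{necessity} of the bounds (every coherent $z$ lies in the interval) and \emph{sufficiency} (every $z$ in the interval is coherent). Since $A,H,B,K$ are logically independent and $H,K\neq\bot$, the pair $(x,y)$ is coherent for every $(x,y)\in[0,1]^2$, so throughout I may take $x,y\in[0,1]$.

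For necessity I would first read off from \eqref{EQ:CONJUNCTION} the value of the indicator $(A|H)\wedge(B|K)$ on each constituent contained in $H\vee K$. A direct inspection of these finitely many constituents yields the three pointwise inequalities, valid whenever $H\vee K$ is true,
\[
0\;\leq\;(A|H)\wedge(B|K)\;\leq\;A|H,\qquad (A|H)\wedge(B|K)\;\leq\;B|K,\qquad (A|H)\wedge(B|K)\;\geq\;A|H+B|K-1.
\]
Applying Remark~\ref{REM:INEQ-CRQ} to each of them (viewing $A|H+B|K-1$ as the conditional random quantity $(A|H+B|K-1)|(H\vee K)$, whose prevision equals $x+y-1$ by linearity and by $\prev(A|H)=x$, $\prev(B|K)=y$) promotes these pointwise relations to the prevision inequalities $0\leq z$, $z\leq x$, $z\leq y$, and $z\geq x+y-1$, which together say exactly $\max\{x+y-1,0\}\leq z\leq\min\{x,y\}$.

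For sufficiency I would invoke the operative characterization, Theorem~\ref{CNES-PREV-I_0-INT}, for the family $\F=\{A|H,\,B|K,\,(A|H)\wedge(B|K)\}$ and assessment $\M=(x,y,z)$. The constituents contained in $H\vee K$ generate the points $Q_h=(q_{h1},q_{h2},q_{h3})$, whose first two coordinates are the values of $A|H$ and $B|K$ and whose third is the value of the conjunction; enumerating them gives the vectors $(1,1,1)$, $(1,y,y)$, $(1,0,0)$, $(0,1,0)$, $(0,y,0)$, $(0,0,0)$, $(x,1,x)$, $(x,0,0)$. Coherence then reduces to $\M\in\I$, the convex hull of these $Q_h$, i.e.\ to the solvability of system $(\Sigma)$. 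As $\I$ is convex, the admissible $z$ (for fixed $x,y$) form an interval, so it suffices to realize the two endpoints and interpolate. I would exhibit explicit convex combinations: for the upper bound, when $x\leq y$ the weights $x,\,y-x,\,1-y$ on $(1,1,1)$, $(0,1,0)$, $(0,0,0)$ give marginals $(x,y)$ and third coordinate $\min\{x,y\}$, with the symmetric choice when $y\leq x$; for the lower bound, the weights $x,\,y,\,1-x-y$ on $(1,0,0)$, $(0,1,0)$, $(0,0,0)$ realize $z=0$ when $x+y\leq1$, while the weights $x+y-1,\,1-y,\,1-x$ on $(1,1,1)$, $(1,0,0)$, $(0,1,0)$ realize $z=x+y-1$ when $x+y\geq1$. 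Convexity of $\I$ covers every intermediate $z$.

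The main obstacle is the sufficiency direction: one must produce the masses $\lambda_h$ hitting the Fréchet endpoints while keeping the two marginals pinned at $x$ and $y$, which forces the case distinctions $x\leq y$ versus $y\leq x$ and $x+y\leq1$ versus $x+y\geq1$ above, and one must still verify condition~(ii) of Theorem~\ref{CNES-PREV-I_0-INT}. The latter is the only delicate point: at boundary assessments a conditioning event may receive zero mass in every solution, so that $I_0\neq\bot$; but then $\M_0$ is a subassessment on a strict subfamily of $\F$, whose coherence follows from the coherence of $(x,y)$ and the logical independence of $A,H,B,K$, so condition~(ii) holds and the characterization delivers coherence of $(x,y,z)$.
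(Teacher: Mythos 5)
Your proof is correct, and it follows essentially the route the paper indicates: the paper does not prove Theorem~\ref{THM:FRECHET} itself but recalls it from [GiSa14, Theorem~7], remarking that the main step is to show that $(x,y,z)$ is coherent iff it lies in the tetrahedron with vertices $(1,1,1)$, $(1,0,0)$, $(0,1,0)$, $(0,0,0)$ — which is exactly the convex-hull computation you carry out for sufficiency (and which would also give necessity, since all eight points $Q_h$, including $(1,y,y)$, $(x,1,x)$, $(0,y,0)$, $(x,0,0)$, lie in that tetrahedron). Your alternative necessity argument via the pointwise inequalities and Remark~\ref{REM:INEQ-CRQ} is a valid minor variant, and your treatment of condition (ii) of Theorem~\ref{CNES-PREV-I_0-INT} is sound — indeed $I_0=\emptyset$ here, because your explicit solutions place all mass on constituents contained in $HK$, so every conditioning event receives positive mass.
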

We observe that, by logical independence, the assessment $(x,y)$ on $\{A|H,B|K\}$ is coherent for every $(x,y)\in[0,1]^2$. Moreover, the main aspect in the  proof of Theorem \ref{THM:FRECHET} shows that  the assessment $(x,y,z)$ is coherent if and only if it belongs to the  tetrahedron with vertices the points $(1, 1, 1)$, $(1, 0, 0)$, $(0, 1, 0)$, $(0, 0, 0)$.
\begin{remark}
Notice that, the assumption of logical independence plays a key role for the validity of Theorem    \ref{THM:FRECHET}. Indeed, in case of some logical dependencies, for the interval 	 $[z',z'']$ of coherent extensions $z$, it holds that $\max\{x+y-1,0\}\leq z'\leq z''\leq\min\{x,y\}$.
For instance, when $H=K$ and $AB=\emptyset$, the coherence of the assessment $(x,y)$ on $\{A|H,B|H\}$ is equivalent to the condition  $x+y-1\leq 0$. In this case, it holds that $(A|H)\wedge (B|H)=AB|H$ with $P(AB|H)=0$;  then, 
 the unique coherent extension on $AB|H$ is $z=0$.
As another example, in the case $A=B$, with $A,H,K$ assumed to be  logically independent, it holds that the assessment $(x,y)$ on $\{A|H,A|K\}$ is coherent for every $(x,y)\in[0,1]^2$. Moreover,
the extension $z$ is coherent if and only if $xy\leq  z\leq \min\{x,y\}$ (see \cite[Theorem 5]{GiSa19b}). Finally, we remark that in all cases,  for each coherent extension $z$, it holds that  $z\in [z',z'']\subseteq  [0,1]$; thus $(A|H) \wedge (B|K)\in [0,1]$.
\end{remark} 
Other approaches to compounded conditionals, which are not based on coherence, can be found in \cite{kaufmann09,mcgee89} (see also \cite{Cala17,Godo17}).
A study of the lower and upper bounds for other definitions of conjunction, where the conjunction is a conditional event like Adams' quasi conjunction,
 has been given in \cite{SUM2018S}.

 The notion of an iterated conditional is based on a  structure  like  (\ref{EQ:AgH}), 
i.e. $ \Box|\bigcirc = \Box\wedge \bigcirc +\prev(\Box|\bigcirc)\no{\bigcirc}$,  where $\Box$ denotes $B|K$ and  $\bigcirc$ denotes $A|H$, and where we set $\prev(\Box|\bigcirc)=\mu$.  We recall that in the framework of subjective probability $\mu=\prev(\Box|\bigcirc)$ is the amount that you agree to pay, by knowing that you will receive the random quantity  $\Box\wedge \bigcirc +\prev(\Box|\bigcirc)\no{\bigcirc}$. 
Then, the  notion of iterated conditional $(B|K)|(A|H)$ is defined (see, e.g., \cite{GiSa13c,GiSa13a,GiSa14}) as follows:
\begin{definition}
	\label{DEF:ITER-COND}
	 Given any  pair of conditional events $A|H$ and $B|K$, with $AH\neq \emptyset$, let $(x,y,z)$ be a coherent assessment on $\{A|H,B|K, (A|H)\wedge (B|K)\}$. The iterated
	conditional $(B|K)|(A|H)$ is defined as 
	\begin{equation}\label{EQ:ITER-COND}
	(B|K)|(A|H) = (B|K) \wedge (A|H) + \mu\,  \no{A}|H=\left\{\begin{array}{ll}
	1, &\mbox{if $AHBK$ is true,}\\
	0, &\mbox{if $AH\no{B}K$ is true,}\\
	y, &\mbox{if $AH\no{K}$ is true,}\\
	x + \mu(1-x), &\mbox{if $\no{H}BK$ is true,}\\
	\mu(1-x), &\mbox{if $\no{H}\no{B}K$ is true,}\\
	z+\mu(1-x),  &\mbox{if $ \no{H}\no{K}$ is true,}\\
	\mu,  &\mbox{if $\no{A}H$ is true,}	
	\end{array}
	\right.
	\end{equation}
	where
	$\mu =\mathbb{P}[(B|K)|(A|H)]=\prev[(B|K) \wedge (A|H) + \mu\,  \no{A}|H]$.
\end{definition}
In Definition \ref{DEF:ITER-COND} the quantity $\mu$, unlike $z$ in Definition \ref{CONJUNCTION}, is not introduced as a (suitable) conditional prevision by a preliminary step. However, later we examine some aspects of the  coherence of $\mu$, and  in Theorem \ref{THM:PIONITERATED} we illustrate  the coherence of the prevision assessment $(x,y,z,\mu)$, under the logical independence of $A,H,B,K$.  We recall that,  by the  betting scheme, if you  assess $\prev[(B|K)|(A|H)]=\mu$, then you agree  to pay the amount $\mu$, with the proviso that you will receive the quantity $(A|H)\wedge(B|K)  + \mu\, \no{A}|H$. 
In what follows,  in order to check coherence,   a bet is  called off when you  receive  back the  paid  amount $\mu$, for every  $\mu$. 
 This may  happen in particular cases and for some preliminary probability values.
For instance, when $H=K=\Omega$, by Definition \ref{DEF:ITER-COND}  it holds that 
$(B|\Omega)|(A|\Omega)=AB+\mu\no{A}$, where $\mu=\prev(AB+\mu\no{A})$.  Then,
in a bet on $AB+\mu\no{A}$ you agree to pay $\mu$  by receiving  1, or 0, or $\mu$ according to whether $AB$ is true, or $A\no{B}$ is true, or  $\no{A}$ is true. 
When  $\no{A}$ is true you receive back the same amount you paid and hence, in this case, the bet is called off.
Then, for checking the coherence of $\mu$, we discard the case where $\no{A}$ is true, by only considering the values of the random gain restricted to $A$.  Thus, $\mu=P(B|A)$ and  $(B|\Omega)|(A|\Omega)=AB+\mu\no{A}=AB+P(B|A)\no{A}=B|A$.
Moreover, as we will see, a  bet on  $(B|K)|(A|H)$  may  be also called off     when $x=P(A|H)=0$.

Definition~\ref{DEF:ITER-COND}  allows us to represent antecedent-nested and consequent-nested conditionals. These are, respectively, conditionals with other conditionals as antecedents, and conditionals with other conditionals as consequents. As an  example of a natural language instantiation of such a conditional consider the following:
\begin{quote}``If the match is canceled if it starts raining, then the match is canceled if it starts snowing'' (p. 45 of \cite{douven16}).\end{quote} 
\begin{remark}\label{REM:A|H=0}
	Notice that we assume  $AH\neq \emptyset$ to give a nontrivial meaning to the notion of the iterated conditional. Indeed,   if  $AH$ were equal to $\emptyset$ (and of course $H\neq \emptyset$),  that is $A|H=0$, then it would be the case that $\no{A}|H =1$ and     
	$(B|K)|(A|H)=(B|K)|0=(B|K) \wedge (A|H) + \mu \no{A}|H=\mu$ would follow; that is, $(B|K)|(A|H)$ 
	would coincide with the (indeterminate) value $\mu$.  Similarly in the case of  $B|\emptyset$   (which is of no interest). Thus  the trivial iterated conditional $(B|K)|0$
	is not considered in our approach. In betting terms, both situations mean that you get your money back because these bets are always called-off.
\end{remark}
We observe that, by the linearity of prevision, it holds that 
\[
\begin{array}{lll}
\mu &=& \prev((B|K)|(A|H)) =\prev((B|K) \wedge (A|H)+\mu\, \no{A}|H)= 
\prev((B|K) \wedge (A|H))+\prev(\mu\, \no{A}|H)
=\\&=&\prev((B|K) \wedge (A|H)) + \mu\, P(\no{A}|H) = z + \mu(1-x) \,,
\end{array}
\]
from which it follows that  (\cite{GiSa13a})
\begin{equation}\label{EQ:PRODUCT}
z=\prev((B|K) \wedge (A|H))=\mu x= \prev((B|K)|(A|H))P(A|H).
\end{equation}
Here, when $x>0$, we obtain $\mu = \frac{z}{x} \in [0,1]$.
Based on the equality $\mu=z + \mu(1-x)$,
formula (\ref{EQ:ITER-COND}) can be written as
\begin{equation}\label{EQ:SHORTITER-COND}
(B|K)|(A|H) 
= \left\{\begin{array}{ll}
1, &\mbox{if $AHBK$ is true,}\\
0, &\mbox{if $AH\no{B}K$ is true,}\\
y, &\mbox{if $AH\no{K}$ is true,}\\
x + \mu(1-x), &\mbox{if $\no{H}BK$ is true,}\\
\mu(1-x), &\mbox{if $\no{H}\no{B}K$ is true,}\\
\mu,  &\mbox{if $\no{A}H \vee \no{H}\no{K}$ is true.}
\end{array}
\right.
\end{equation}
When $x>0$ it holds that  $\{1,0,y,x+\mu(1-x),\mu(1-x),\mu\}\subset [0,1]$ and hence $(B|K)|(A|H) \in[0,1]$.
Moreover, when $x=0$, it holds that
\begin{equation}
(B|K)|(A|H) 
= \left\{\begin{array}{ll}
1, &\mbox{if $AHBK$ is true,}\\
0, &\mbox{if $AH\no{B}K$ is true,}\\
y, &\mbox{if $AH\no{K}$ is true,}\\
\mu, &\mbox{if $\no{A}H \vee \no{H}$ is true.}
\end{array}
\right.
\end{equation}
In order that the prevision assessment  $\mu$ on $(B|K)|(A|H)$ be coherent, $\mu$ must belong to the convex hull of the values $0,y,1$; that is, (also when $x=0$) it must be that $\mu \in [0,1]$.  Then,  $(B|K)|(A|H)\in [0,1]$ in all cases.	
In general, in 
\cite[Theorem 3]{SPOG18} it has been given the following result:
	\begin{theorem}
	\label{THM:PIONITERATED}
	Let $A,B,H,K$ be any logically independent events. The set $\Pi $ of all
	coherent assessments $(x,y,z,\mu )$ on the family
	$\mathcal{F}=\{A|H,B|K$, $(A|H)\wedge (B|K), (B|K)|(A|H)\}$ is
	$\Pi =\Pi '\cup \Pi ''$, where
	\begin{equation}
	\label{EQ:PI}
	\begin{array}{l}
	\Pi '=\{(x,y,z,\mu ): x\in (0,1], y\in [0,1],
	z\in [z', z''], \mu =
	\frac{z}{x}\},
	\\ \noalign{\vspace{4pt}}
	\text{with } z'=\max\{x+y-1,0\}, z''= \min\{x,y\}, \text{ and}
	\\ \noalign{\vspace{4pt}}
	\Pi ''=\{(0,y,0,\mu ): (y,\mu )\in [0,1]^{2}\}.
	\end{array}
	\end{equation}
\end{theorem}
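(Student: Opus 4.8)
The plan is to obtain the coherence of the four-dimensional assessment $(x,y,z,\mu)$ by leveraging the characterization already available for its restriction to the triple $\{A|H,B|K,(A|H)\wedge(B|K)\}$ and then pinning down $\mu$ through the product relation. First I would note that any coherent assessment on $\mathcal{F}$ restricts to a coherent one on $\{A|H,B|K,(A|H)\wedge(B|K)\}$; hence, by the logical independence of $A,B,H,K$ together with Theorem~\ref{THM:FRECHET}, we necessarily have $x,y\in[0,1]$ and $z\in[z',z'']$ with $z'=\max\{x+y-1,0\}$ and $z''=\min\{x,y\}$, equivalently $(x,y,z)$ lies in the tetrahedron $T$ with vertices $(1,1,1),(1,0,0),(0,1,0),(0,0,0)$. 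These are therefore necessary conditions.

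Next I would exploit the defining identity $(B|K)|(A|H)=(A|H)\wedge(B|K)+\mu\,\no{A}|H$: taking previsions and using linearity gives $\mu=z+\mu(1-x)$, i.e. the product relation $z=\mu x$ of~(\ref{EQ:PRODUCT}), a further necessary constraint that governs two regimes. If $x>0$ this forces $\mu=z/x$; moreover $0\le z'\le z\le z''=\min\{x,y\}\le x$ yields $\mu\in[0,1]$ automatically, so the necessary conditions collapse exactly to the description of $\Pi'$. If $x=0$, then $z\in[\max\{y-1,0\},\min\{0,y\}]=\{0\}$ forces $z=0$, the relation $z=\mu x$ becomes vacuous and leaves $\mu$ unconstrained by it; this is the regime of $\Pi''$, where it remains to determine the coherent range of $\mu$.

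For the converse I would invoke the operative characterization of coherence (Theorem~\ref{CNES-PREV-I_0-INT}). For $(x,y,z,\mu)\in\Pi'$ I would read off the constituent vectors $Q_h$ of the four quantities from~(\ref{EQ:ITER-COND}) and exhibit a solution of the associated system $(\Sigma)$ for $\M=(x,y,z,z/x)$ in which every conditioning event, including the effective conditioning region of the iterated conditional, receives positive probability; since $x>0$ this is possible, so $I_0=\emptyset$ and condition (ii) is vacuous. The \emph{main obstacle} is the degenerate regime $x=0$. There, taking $P(H)>0$, the constraint $P(A|H)=0$ forces $P(AH)=0$; and since for $x=0$ the iterated conditional equals the indicator of $B|K$ on the constituents contained in $AH$ and equals $\mu$ on $\no{A}H\vee\no{H}$, the event $AH$ is precisely its effective conditioning event, which therefore lies in $I_0$. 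The coherent values of $\mu$ are then dictated by the second-level condition (ii), namely $\mu$ must be a coherent prevision of $(B|K)|AH$; as $B|K$ takes the values $1,0,y$ on the nonempty (by logical independence) constituents $AHBK$, $AH\no{B}K$, $AH\no{K}$, the coherent range is their convex hull $[0,1]$, independent of $y$, giving $\Pi''$. Together with the two preceding paragraphs this yields $\Pi=\Pi'\cup\Pi''$.
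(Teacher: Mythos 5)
You should first note that the paper does not actually prove Theorem~\ref{THM:PIONITERATED}: it is imported verbatim from \cite[Theorem 3]{SPOG18}, so there is no in-paper proof to compare against. That said, your strategy is the natural one within the paper's framework and is essentially correct: necessity of the Fr\'echet--Hoeffding bounds from Theorem~\ref{THM:FRECHET} by restriction, necessity of $\mu=z/x$ (for $x>0$) from the product relation (\ref{EQ:PRODUCT}), the collapse to $z=0$ with $\mu$ unconstrained when $x=0$, and sufficiency via Theorem~\ref{CNES-PREV-I_0-INT}; your treatment of the degenerate regime $x=0$ matches exactly the paper's own discussion after (\ref{EQ:SHORTITER-COND}), where $\mu$ is required to lie in the convex hull of $\{0,y,1\}$, i.e.\ in $[0,1]$. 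The one place where you assert rather than argue is the existence, for each $(x,y,z,z/x)\in\Pi'$, of a solution of $(\Sigma)$ putting positive mass on every conditioning event; since this is the heart of the sufficiency direction it should be made explicit, and it is easy to do. Write $(x,y,z)=\lambda_1(1,1,1)+\lambda_2(1,0,0)+\lambda_3(0,1,0)+\lambda_4(0,0,0)$ with $\lambda_h\geq 0$, $\sum_h\lambda_h=1$ (possible exactly on the tetrahedron), and place the masses $\lambda_1,\lambda_2,\lambda_3,\lambda_4$ on $AHBK$, $AH\no{B}K$, $\no{A}HBK$, $\no{A}H\no{B}K$ respectively; these constituents are nonempty by logical independence. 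The first three equations of $(\Sigma)$ then hold by construction, and the fourth is automatic: on every constituent the value of $(B|K)|(A|H)$ in (\ref{EQ:ITER-COND}) satisfies $q_{h4}=q_{h3}+\mu(1-q_{h1})$, so $\sum_h\lambda_h q_{h4}=z+\mu(1-x)=\mu$ precisely because $\mu x=z$. This choice gives $P(H)=P(K)=1$ and $P(AH)=\lambda_1+\lambda_2=x>0$, so every conditioning event, including the effective conditioning event $AH\vee\no{H}K$ of the iterated conditional, has positive probability; hence $I_0=\emptyset$ and coherence follows. With that step filled in (and a similar explicit solution, e.g.\ masses $y$ and $1-y$ on $\no{A}HBK$ and $\no{A}H\no{B}K$, for the $x=0$ case), your argument is complete.
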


\begin{remark}
We note that  the iterated conditional $(B|K)|A$ is (not a conditional event but) a conditional random quantity. Moreover,   $(B|K)|A$ does not coincide with the conditional event $B|AK$ (see \cite[Section 3.3]{GiSa14}). 
Thus the import-export principle (\cite{mcgee89}), which says that   $(B|K)|A=B|AK$, does not hold (as, e.g., in  \cite{adams75,kaufmann09}). Therefore, as shown in \cite{GiSa14}, we  avoid the  counter-intuitive consequences related to the well-known Lewis' first triviality result  (\cite{lewis76}).
\end{remark}
\begin{remark}\label{REM:AHK}
As a further comment on the import-export principle, we observe that 
	given any random quantity  $X$ and any  events  $H,K$, with $H\neq \emptyset$, $K\neq \emptyset$, it holds that (see \cite[Proposition 1]{GiSa13c}): $(X|K)|H=(X|H)|K=X|HK$, 
	when $H\subseteq K$ or $K\subseteq H$. Of course, $X|HK$ coincides with $X|H$, or $X|K$, according to whether $H\subseteq K$, or 
	$K\subseteq H$, respectively.
	In particular, when $X$ is an event $A$, it holds that
	$(A|K)|H=(A|H)|K=A|HK$.
	Then, given any two events $H,K$, with $K\neq \emptyset$, as $K\subseteq H\vee K$,    it holds that: 
	$(X|K)|(H\vee K)=X|K$.
\end{remark}
We recall the notion of conjunction of $n$ conditional events (\cite{GiSa19}).
\begin{definition}\label{DEF:CONGn}	Let  $n$ conditional events $E_1|H_1,\ldots,E_n|H_n$ be given.
	For each  non-empty strict subset $S$  of $\{1,\ldots,n\}$,  let $x_{S}$ be a prevision assessment on $\bigwedge_{i\in S} (E_i|H_i)$.
	Then, the conjunction  $(E_1|H_1) \wedge \cdots \wedge (E_n|H_n)$ is the conditional random quantity $\C_{1\cdots n}$ defined as
	\begin{equation}\label{EQ:CF}
	\begin{array}{lll}
	\C_{1\cdots n}=
	[\bigwedge_{i=1}^n E_iH_i+\sum_{\emptyset \neq S\subset \{1,2\ldots,n\}}x_{S}(\bigwedge_{i\in S} \no{H}_i)\wedge(\bigwedge_{i\notin S} E_i{H}_i)]|(\bigvee_{i=1}^n H_i)=
	\\
	=\left\{
	\begin{array}{llll}
	1, &\mbox{ if } \bigwedge_{i=1}^n E_iH_i\, \mbox{ is true,} \\
	0, &\mbox{ if } \bigvee_{i=1}^n \no{E}_iH_i\, \mbox{ is true}, \\
	x_{S}, &\mbox{ if } (\bigwedge_{i\in S} \no{H}_i)\wedge(\bigwedge_{i\notin S} E_i{H}_i)\, \mbox{ is true}, \; \emptyset \neq S\subset \{1,2\ldots,n\},\\
	x_{1\cdots n}, &\mbox{ if } (\bigwedge_{i=1}^n \no{H}_i) \mbox{ is true},
	\end{array}
	\right.
	\end{array}
	\end{equation}	
\end{definition}
where 
\[
x_{1\cdots n}=x_{\{1,\ldots, n\}}=\prev[(\bigwedge_{i=1}^n E_iH_i+\sum_{\emptyset \neq S\subset \{1,2\ldots,n\}}x_{S}(\bigwedge_{i\in S} \no{H}_i)\wedge(\bigwedge_{i\notin S} E_i{H}_i))|(\bigvee_{i=1}^n H_i)]=\prev(\C_{1\cdots n}).
\]
Of course,  we obtain  $\C_1=E_1|H_1$, when $n=1$.  In  Definition \ref{DEF:CONGn}  each possible value $x_S$ of $\C_{1\cdots n}$,  $\emptyset\neq  S\subset \{1,\ldots,n\}$, is evaluated  when defining (in a previous step) the conjunction $\C_{S}=\bigwedge_{i\in S} (E_i|H_i)$. 
Then, after the conditional prevision $x_{1\cdots n}$ is evaluated, $\C_{1\cdots n}$ is completely specified.
In the framework of the betting scheme,  $x_{1\cdots n}$ is the amount  that you agree to pay  with the proviso that you will receive:
\begin{itemize}
	\item $1$, if all conditional events are true;
	\item	$0$, if at least one of the conditional events is false;
	\item the prevision of the conjunction of that conditional events which are void,  otherwise. In particular you receive back $x_{1\cdots n}$ when all  conditional events are void.
\end{itemize}

The operation of conjunction is associative and commutative. In addition, the following monotonicity property holds (\cite[Theorem 7]{GiSa19})
\begin{equation}\label{EQ:MONOTONY}
\C_{1\cdots n+1}\leq \C_{1\cdots n}.
\end{equation}
We recall the following generalized notion  of  iterated conditional (\cite[Definition 14]{GiSa19}). 
\begin{definition}\label{DEF:GENITER}
	Let  $n+1$ conditional events $E_1|H_1, \ldots, E_{n+1}|H_{n+1}$, with $(E_1|H_1) \wedge \cdots \wedge (E_n|H_n)\neq 0$,  be given. We denote by $(E_{n+1}|H_{n+1})|((E_1|H_1) \wedge \cdots \wedge (E_n|H_n))$   the following random quantity
	\[
	\begin{array}{ll}
	(E_1|H_1) \wedge \cdots \wedge (E_{n+1}|H_{n+1})  + \mu (1-(E_1|H_1) \wedge \cdots \wedge (E_n|H_n)).
	\end{array}
	\]
	where $\mu = \prev[(E_{n+1}|H_{n+1})|((E_1|H_1) \wedge \cdots \wedge (E_n|H_n))]$.
\end{definition}
Definition~\ref{DEF:GENITER} extends  the notion of the iterated conditional $(E_{2}|H_{2})|(E_1|H_1)$ given in 
Definition~\ref{DEF:ITER-COND} to the case where  the antecedent is a conjunction of conditional events.
Based on the betting metaphor,  the quantity $\mu$ is the amount to be paid in order to receive the amount $\C_{1\cdots n+1}+\mu (1-\C_{1\cdots n})=	(E_1|H_1) \wedge \cdots \wedge (E_{n+1}|H_{n+1})  + \mu (1-(E_1|H_1) \wedge \cdots \wedge (E_n|H_n))$. 
We  observe that, defining 	$\prev(\C_{1\cdots n})=z_{n}$ and 
$\prev(\C_{1\cdots n+1})=z_{n+1}$, by the linearity of prevision it holds that $\mu=z_{n+1}+\mu(1-z_{n})$; then, $z_{n+1}=\mu z_n$, that is 
\begin{equation}
\label{EQ:COMPOUNDCN}
\prev(\C_{1\cdots n+1})=\prev[(E_{n+1}|H_{n+1})|\C_{1\cdots n}]\prev(\C_{1\cdots n}),
\end{equation}
which is  the compound prevision theorem for the generalized iterated conditionals.

\section{The iterated conditional $D|(C|A)$}	
\label{SEC:DgCgA}
In this section we analyze the iterated conditional $D|(C|A)$ (see, e.g., \cite{douven10,kaufmann09}) which is a more general version of the iterated conditional of interest $A|(C|A)$ that will be studied in Section \ref{SEC:AgCgA}.
We examine the object $D|(C|A)$ in the framework of the betting scheme.
Given   any real number $x\in[0,1]$,  we denote by $(x>0)$ the event  which is true or false, according to whether $x$ is positive or zero, respectively. By the symbol $(x>0)E$ we denote the conjunction between  $(x>0)$ and any event  $E$. 
Then, the event $AC\vee (x>0)\no{A}$ coincides with $AC\vee \no{A}$, or $AC$, according to whether $x$ is positive or zero, respectively. 
The next result  shows that    $D|(C|A)$ is a conditional random quantity, where the (dynamic) conditioning event is
$AC\vee (x>0)\no{A}$.
\begin{theorem}\label{THM:DgCgADynamic}
Let  a coherent assessment  $(x,\mu)$ on $\{C|A,D|(C|A)\}$ be given.
The iterated conditional $D|(C|A)$ is  the  conditional random quantity 
\begin{equation}\label{EQ:DgCgADynamic}
\begin{array}{lll}
D|(C|A)&=&
[ACD+(x+\mu(1-x))\no{A}D+\mu(1-x)\no{A}\no{D}]|[AC\vee (x>0)\no{A}]=\\ \\
&=&\left\{
\begin{array}{ll}
D|AC, & \mbox{ if } x=0,\\ 
\,[ACD+(x+\mu(1-x))\no{A}D+\mu(1-x)\no{A}\no{D}]|(AC\vee \no{A}), & \mbox{ if } 0<x<1,\\
D|(AC\vee\no{A}),& \mbox{ if } x=1.\\
\end{array}
\right.
\end{array}
\end{equation}	
\end{theorem}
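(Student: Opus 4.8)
The plan is to specialize the general definitions to the object $D|(C|A)$, read off its value on every constituent, and then recognize the resulting random quantity as the conditional random quantity $Z|H^{*}$ appearing in the statement; the only substantive point is a single prevision identity.

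First I would apply Definition~\ref{DEF:ITER-COND} taking the consequent $B|K$ to be the unconditional event $D=D|\Omega$ and the antecedent $A|H$ to be $C|A$ (so that $\no{A}|H$ becomes $\no{C}|A$), which gives $D|(C|A)=\big((C|A)\wedge D\big)+\mu\,(\no{C}|A)$ with $\mu=\prev(D|(C|A))$. Evaluating the conjunction by Definition~\ref{CONJUNCTION} in the degenerate case $K=\Omega$ (where the ``both antecedents void'' term drops out) yields $(C|A)\wedge D=ACD+x\no{A}D$, while $\no{C}|A=A\no{C}+(1-x)\no{A}$ by (\ref{EQ:XgH}). Combining these and splitting $\no{A}$ according to $D$, I would obtain
\[
D|(C|A)=ACD+(x+\mu(1-x))\no{A}D+\mu(1-x)\no{A}\no{D}+\mu A\no{C},
\]
that is, the values $1,\,0,\,x+\mu(1-x),\,\mu(1-x),\,\mu$ on the constituents $ACD,\ AC\no{D},\ \no{A}D,\ \no{A}\no{D},\ A\no{C}$.

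Next I would set $Z=ACD+(x+\mu(1-x))\no{A}D+\mu(1-x)\no{A}\no{D}$ and $H^{*}=AC\vee(x>0)\no{A}$. For $x>0$ one has $H^{*}=AC\vee\no{A}$ and $\no{H^{*}}=A\no{C}$, and $Z$ vanishes off $H^{*}$, so $Z|H^{*}=Z+\prev(Z|H^{*})A\no{C}$. Comparing with the display above, $D|(C|A)$ and $Z|H^{*}$ already agree on every constituent contained in $H^{*}$; on the single remaining constituent $A\no{C}$ they equal $\mu$ and $\prev(Z|H^{*})$, respectively. Hence the claim for $x>0$ reduces to proving $\prev(Z|H^{*})=\mu$.

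To prove this I would compute $\prev(Z)=P(ACD)+xP(\no{A}D)+\mu(1-x)P(\no{A})$ and use two ingredients: the compound prevision relation (\ref{EQ:PRODUCT}), which gives $P(ACD)+xP(\no{A}D)=\prev((C|A)\wedge D)=\mu x$; and the multiplication rule $P(AC)=xP(A)$. Since $Z$ is supported in $H^{*}$, one has $\prev(Z|H^{*})=\prev(Z)/P(H^{*})$ with $P(H^{*})=P(AC)+P(\no{A})=xP(A)+P(\no{A})>0$, and the short computation $\prev(Z)=\mu(x+(1-x)P(\no{A}))=\mu(xP(A)+P(\no{A}))=\mu P(H^{*})$ gives $\prev(Z|H^{*})=\mu$, settling $0<x<1$. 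The case $x=1$ then follows since $\mu(1-x)=0$ makes $Z=ACD+\no{A}D$ the indicator of $D(AC\vee\no{A})$, so $Z|H^{*}=D|(AC\vee\no{A})$. Finally, for $x=0$ the antecedent $C|A=AC+x\no{A}$ is the indicator of $AC$, $H^{*}$ collapses to $AC$, and the representation reduces to $D|(C|A)=ACD+\mu\no{AC}$; this is exactly the form (\ref{EQ:XgH}) of $D|AC$, so the consistency relation (\ref{EQ:PREVXgH}) forces $\mu=P(D|AC)$ and hence $D|(C|A)=D|AC$. The main obstacle is the prevision identity $\prev(Z|H^{*})=\mu$ of the third step; the rest is bookkeeping over constituents, and the identity itself is immediate once (\ref{EQ:PRODUCT}) and the multiplication rule are available.
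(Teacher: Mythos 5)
Your proof is correct, and it reaches the paper's constituent table by an equivalent route (you rederive it from Definitions \ref{CONJUNCTION} and \ref{DEF:ITER-COND} where the paper simply specializes its formula (\ref{EQ:SHORTITER-COND})); the genuine divergence is in how the representation is justified for $x>0$. The paper argues operationally: it observes that $D|(C|A)-\mu$ vanishes for every $\mu$ exactly on $A\no{C}$ when $0<x<1$, declares the bet called off precisely there, and reads the conditioning event $AC\vee\no{A}$ off from that; the fact that the third value of the resulting conditional random quantity is $\mu$ is left implicit in the operational semantics. You instead verify this explicitly, computing $\prev(Z)=\prev\bigl((C|A)\wedge D\bigr)+\mu(1-x)P(\no{A})=\mu x+\mu(1-x)P(\no{A})=\mu P(H^{*})$ via (\ref{EQ:PRODUCT}) and noting that $P(H^{*})=1-P(A)(1-x)\geq x>0$ for every coherent extension, so $\prev(Z|H^{*})=\mu$. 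This buys a self-contained check that $\mu$ really is the conditional prevision of the numerator given $AC\vee\no{A}$, and it lets you absorb $x=1$ into the same computation, whereas the paper treats $x=1$ separately through Theorem \ref{THM:EQ-CRQ}.

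One step needs repair. In the case $x=0$ you claim that the consistency relation (\ref{EQ:PREVXgH}) forces $\mu=P(D|AC)$. But when $x=0$ one has $P(AC)=P(C|A)P(A)=0$ in every coherent extension, so taking unconditional previsions of $ACD+\mu\,\no{AC}$ and of $D|AC=ACD+t\,\no{AC}$ only yields $(\mu-t)P(AC)=0$, which is vacuous here. The identification $\mu=P(D|AC)$ requires the coherence of the gains restricted to $AC$, i.e.\ exactly Theorem \ref{THM:EQ-CRQ}, which is what the paper invokes after writing down the two tables. Since that theorem is available and your two objects visibly satisfy its hypothesis (they coincide when $AC$ is true), this is a citation slip rather than a conceptual gap, but as written the $x=0$ case is not justified.
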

\begin{proof}
Let us  consider a bet on $D|(C|A)$, with $A\neq\bot, C\neq \bot$,  $P(C|A)=x$, and $\prev[D|(C|A)]=\mu$. In this bet, $\mu$ is the amount that you agree to pay, while  $D|(C|A)$ is the amount that you  receive.
We observe that the bet on $D|(C|A)$ must be called off in all cases where the random gain  $G=D|(C|A)-\prev[D|(C|A)]=D|(C|A)-\mu$ coincides with zero, whatever be the assessed value $\mu$. In other words,  the  bet  must be called off in all cases where the amount 
 $D|(C|A)$ that you receive coincides  with the quantity that you payed  $\mu$, whatever be the assessed value $\mu$. 
By applying
(\ref{EQ:SHORTITER-COND})  to the iterated conditional $(D|\Omega)|(C|A)=D|(C|A)$, we obtain 
\begin{equation}\label{EQ:DgCgA}
D|(C|A)= D\wedge (C|A)+\mu(1-C|A)=\left\{\begin{array}{ll}
1, &\mbox{ if }  ACD \mbox{ is true,}\\
0, &\mbox{ if }  AC\no{D} \mbox{ is true,}\\
\mu, &\mbox{ if }  A\no{C}\mbox{ is true,}\\
x+\mu(1-x), &\mbox{ if }  \no{A}D \mbox{ is true,}\\
\mu(1-x), &\mbox{ if }  \no{A}\no{D} \mbox{ is true.}
\end{array}
\right.
\end{equation} 
 We distinguish three cases: $(i)$ $x=0$; $(ii)$ $0<x<1$; $(iii)$ $x=1$. 
We show that in case $(i)$ the bet is called off when $\no{A}\vee \no{C}$ is true; in cases $(ii)$ and $(iii)$ the bet is called off when $\no{A}C$ is true.\\
Case $(i)$. 
As $x=0$, formula (\ref{EQ:DgCgA}) becomes
\begin{equation}\label{EQ:CgBgHx=0}
D|(C|A)=\left\{\begin{array}{ll}
1, &\mbox{ if }  ACD \mbox{ is true,}\\
0, &\mbox{ if }  AC\no{D} \mbox{ is true,}\\
\mu, &\mbox{ if }  A\no{C}\mbox{ is true,}\\
\mu, &\mbox{ if }  \no{A}D \mbox{ is true,}\\
\mu, &\mbox{ if }  \no{A}\no{D} \mbox{ is true,}
\end{array}
\right.
=\left\{\begin{array}{ll}
1, &\mbox{ if }  ACD \mbox{ is true,}\\
0, &\mbox{ if }  AC\no{D} \mbox{ is true,}\\
\mu, &\mbox{ if }  \no{A}\vee \no{C} \mbox{ is true}.\\
\end{array}
\right.
\end{equation}
By setting $t=P(D|AC)$ it holds that	
\begin{equation}\label{EQ:CgBH}
\small
D|AC=\left\{\begin{array}{ll}
1, &\mbox{ if }  ACD \mbox{ is true,}\\
0, &\mbox{ if }  AC\no{D} \mbox{ is true,}\\
t, &\mbox{ if }  \no{A}\vee \no{C} \mbox{ is true}.\\
\end{array}
\right.
\end{equation}	
Then, based on  Theorem \ref{THM:EQ-CRQ}, from   (\ref{EQ:CgBgHx=0}) and (\ref{EQ:CgBH})   it follows that  $\mu=t$ and hence
\begin{equation}\label{EQ:DgCgAifx=0}
D|(C|A)=D|AC, \;\mbox{ if } x=0. 
\end{equation}
That is, the two objects $D|(C|A)$ and  $D|AC$ coincide when $x=0$. Here, the bet on $D|(C|A)$ is called off when  $\no{AC}=\no{A}\vee \no{C}$  is true.
\\
Case $(ii)$. 
As $0<x<1$, from (\ref{EQ:DgCgA})   the equality $D|(C|A)=\mu$ holds  for every $\mu$ only when $A\no{C}$ is true. This means that   the bet on $D|(C|A)$ is called off when $A\no{C}$ is true.
Then  the conditioning event is  $\no{A\no{C}}=C\vee \no{A}=AC\vee \no{A}$ and hence
\begin{equation}\label{EQ:DgCgAif0<x<1}
D|(C|A)=
[ACD+(x+\mu(1-x))\no{A}D+\mu(1-x)\no{A}\no{D}]|[AC\vee \no{A}], \mbox{ if } 0<x<1.
\end{equation}
Case $(iii)$. 
As  $x=1$,  the iterated conditional $D|(C|A)$ coincides with the conditional event  $D|(AC\vee \no{A})$. Indeed, in this case formula (\ref{EQ:DgCgA}) becomes
\begin{equation}\label{EQ:CgBgHx=1}
\small
D|(C|A)= \left\{\begin{array}{ll}
1, &\mbox{ if }  ACD \mbox{ is true,}\\
0, &\mbox{ if }  AC\no{D} \mbox{ is true,}\\
\mu, &\mbox{ if }  A\no{C}\mbox{ is true,}\\
1, &\mbox{ if }  \no{A}D \mbox{ is true,}\\
0, &\mbox{ if }  \no{A}\no{D} \mbox{ is true,}
\end{array}
\right.=\left\{\begin{array}{ll}
1, &\mbox{ if }  (AC\vee\no{A}) D \mbox{ is true,}\\
0, &\mbox{ if }  (AC\vee\no{A}) \no{D}  \mbox{ is true,}\\
\mu, &\mbox{ if }  A\no{C}\mbox{ is true.}\\
\end{array}
\right.
\end{equation}	
In addition, by setting $\nu=P(D|(AC\vee \no{A}))$ it holds that, 
\begin{equation}\label{EQ:MI}
\small
D|(AC\vee \no{A})=\left\{\begin{array}{ll}
1, &\mbox{ if }  (AC\vee \no{A}) D \mbox{ is true,}\\
0, &\mbox{ if }  (AC\vee \no{A}) \no{D}  \mbox{ is true,}\\
\nu, &\mbox{ if }  A\no{C}\mbox{ is true.}\\
\end{array}
\right.
\end{equation}	
Then, based on  Theorem \ref{THM:EQ-CRQ}, from   (\ref{EQ:CgBgHx=1}) and (\ref{EQ:MI})   it follows that  $\mu=\nu$ and hence
\begin{equation}\label{EQ:DgCgAifx=1}
D|(C|A)=D|(AC\vee \no{A}), \mbox{ if } x=1. 
\end{equation}
We remark that  (\ref{EQ:DgCgAifx=1}) also  follows by observing that  when $x=1$ it holds that $C|A=AC+x\no{A}=AC+\no{A}=AC\vee A$. Then, we directly obtain $D|(C|A)=D|(AC\vee \no{A})$.\\
Finally, by unifying (\ref{EQ:DgCgAifx=0}), (\ref{EQ:DgCgAif0<x<1}), and (\ref{EQ:DgCgAifx=1}), we obtain 
\[
D|(C|A)=
[ACD+(x+\mu(1-x))\no{A}D+\mu(1-x)\no{A}\no{D}]|[AC\vee (x>0)\no{A}].
\]
\end{proof}
\begin{remark}
As shown in Theorem \ref{THM:DgCgADynamic},  in general $D|(C|A)\neq D|AC$; that is,  the import-export principle is invalid also for antecedent-nested conditionals\footnote{This result answers to a specific question that G. Coletti posed to N. Pfeifer during the conference ``Reasoning under partial knowledge'' (Perugia, Italy,  December 14--15, 2018) in  honor of her 70\textsuperscript{th} birthday. }. Indeed,  $D|(C|A)=D|AC$ only when $x=0$ (in this special case the import-export principle holds) and  $D|(C|A)=D|(AC\vee \no{A})$ only when $x=1$.
In addition, $AC\subseteq C|A \subseteq AC \vee \no{A}$ (logical inclusion  among conditional events), which in numerical terms implies
 $AC\leq C|A \leq AC \vee \no{A}$. However, 
  there are no order relations among the three objects $D|AC$, $D|(C|A)$, and $D|(AC\vee \no{A})$ (see Table \ref{TAB:DgCgA}). 
	\begin{table}[!h]
	\centering
	\begin{tabular}{l|c|lcc|cc}		
		Constituent& $D|AC$& & $D|(C|A)$ & & $D|(AC\vee \no{A})$ \\ 
		&& $x=0$ & $0<x<1$  & $x=1$  &\\
		\hline
		$ACD$           & $1$ & $1$ \,        &            $1$   & $1$ &$1$\\
		$AC\no{D}$       &$0$ & $0$                    & $0$  &$0$ &$0$\\
		$A\no{C}$  &$t$  & $\mu$                  & $\mu$  &$\mu$ &$\nu$ \\
		$\no{A}D$  &$t$  & $\mu$                  & $x+\mu(1-x)$   &$1$   &$1$ \\
		$\no{A}\no{D}$   &$t$  &$\mu$                       & $\mu(1-x)$  &$0$  &$0$ 
	\end{tabular}
	\\[.5em]	
	\caption{$D|(C|A)$ coincides with  $D|AC$, when $x=0$.  $D|(C|A)$ coincides with  $D|(AC\vee \no{A})$, when $x=1$.  When $0<x<1$ there are no order relations among  $D|AC$, $D|(C|A)$, and $D|(AC\vee \no{A})$. \label{TAB:DgCgA}	}
\end{table}		
\end{remark}	
\begin{theorem}\label{THM:NEGDgCgA}
Let $A,C,D$ be three  events, with $AC\neq \emptyset$. Then, $\no{D}|(C|A)=1-D|(C|A)$.
\end{theorem}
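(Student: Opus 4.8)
The plan is to reduce both sides to the single uniform representation \eqref{EQ:DgCgA} and then let the linearity of prevision do the work. First I would apply Theorem~\ref{THM:DgCgADynamic} (equivalently, formula \eqref{EQ:DgCgA}) twice. Writing $x=P(C|A)$, $\mu=\prev[D|(C|A)]$ and $\mu'=\prev[\no{D}|(C|A)]$ for the relevant coherent values, the formula applied to $D|(C|A)$ gives the piecewise values $1,0,\mu,\,x+\mu(1-x),\,\mu(1-x)$ on the constituents $ACD,\ AC\no{D},\ A\no{C},\ \no{A}D,\ \no{A}\no{D}$. Replacing $D$ by $\no{D}$ leaves $x$ unchanged (it depends only on $C|A$) and replaces $\mu$ by $\mu'$, while interchanging the roles of the true/false outputs; so $\no{D}|(C|A)$ takes the values $0,1,\mu',\,\mu'(1-x),\,x+\mu'(1-x)$ on the same constituents. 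Note that, by Theorem~\ref{THM:DgCgADynamic}, both objects are conditional random quantities sharing the one conditioning event $AC\vee(x>0)\no{A}$, independently of the consequent.

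Next I would add the two piecewise expressions. A short constituent-by-constituent computation shows that the sum equals $1$ on $AC$, equals $\mu+\mu'$ on $A\no{C}$, and equals $x+(\mu+\mu')(1-x)$ on $\no{A}$; that is,
\[
D|(C|A)+\no{D}|(C|A)=C|A+(\mu+\mu')(1-C|A).
\]
Taking the prevision of both sides, using linearity together with $\prev(C|A)=x$ and $\prev[D|(C|A)]+\prev[\no{D}|(C|A)]=\mu+\mu'$, yields $\mu+\mu'=x+(\mu+\mu')(1-x)$, hence $(\mu+\mu')\,x=x$.

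If $x>0$ this forces $\mu+\mu'=1$, and the displayed identity collapses to $D|(C|A)+\no{D}|(C|A)=C|A+(1-C|A)=1$, which is exactly $\no{D}|(C|A)=1-D|(C|A)$.

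The main obstacle is the degenerate case $x=0$, where the linearity step becomes vacuous and cannot pin down $\mu+\mu'$. Here I would instead invoke the reduction established in Theorem~\ref{THM:DgCgADynamic}: when $x=0$ one has $D|(C|A)=D|AC$, and applying the same theorem with consequent $\no{D}$ gives $\no{D}|(C|A)=\no{D}|AC$. Since $AC\neq\emptyset$, these are genuine conditional events, and the negation rule for conditional events recalled in Section~\ref{sect:2.1} (namely $\no{E}|H=1-E|H$) gives $\no{D}|AC=1-D|AC$, again establishing the claim. Thus the only subtle point is this $x=0$ boundary case; elsewhere the statement is a direct consequence of the additivity of the representation \eqref{EQ:DgCgA} combined with linearity of prevision.
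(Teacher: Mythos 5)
Your proof is correct and follows essentially the same route as the paper's: the same case split on $x>0$ versus $x=0$, the same key identity $(\mu+\mu')x=x$ obtained by linearity of prevision (the paper derives it via $\prev[D\wedge(C|A)]=\mu x$ and $\prev[D\wedge(C|A)]=x-\eta x$ rather than by summing the two piecewise representations, but the computation is the same), the same constituent-wise verification that the sum is identically $1$, and the same reduction to $D|AC$ and $\no{D}|AC$ in the degenerate case $x=0$.
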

\begin{proof}
We distinguish two cases: case $(i)$ $x>0$ and case $(ii)$ $x=0$.\\
Case $(i)$. We set $P(C|A)=x$, $\prev[D\wedge(C|A)]=z$, and  $\prev[D|(C|A)]=\mu$. Then, we recall that by the linearity of prevision   $\prev[D|(C|A)]=\prev[D\wedge (C|A)]+\mu \prev[1-C|A]$, that is 
$z=\mu x$.
By setting $\prev[\no{D}|(C|A)]=\eta$ and  by applying (\ref{EQ:SHORTITER-COND}) to the iterated conditional $(\no{D}|\Omega)|(C|A)=\no{D}|(C|A)$, we obtain  
$\no{D}|(C|A)=\no{D}\wedge (C|A)+\eta(1-C|A)$.
Then, as $C|A=D\wedge(C|A)+\no{D}\wedge(C|A)$ (see (Proposition 1 in \cite{ECSQARU17}), it holds that
	\begin{equation}\label{EQ:noDgCgA}
	\no{D}|(C|A)= C|A -D\wedge (C|A)+\eta(1-C|A)=\left\{\begin{array}{ll}
	0, &\mbox{ if }  ACD \mbox{ is true,}\\
	1, &\mbox{ if }  AC\no{D} \mbox{ is true,}\\
	\eta, &\mbox{ if }  A\no{C}\mbox{ is true,}\\
	\eta(1-x), &\mbox{ if }  \no{A}D \mbox{ is true,}\\
	x+\eta(1-x), &\mbox{ if }  \no{A}\no{D} \mbox{ is true.}
	\end{array}
	\right.
	\end{equation}
By the linearity of prevision it holds that  $\prev[\no{D}|(C|A)]=P(C|A)-\prev[D\wedge (C|A)]+\eta \prev[1-C|A]$, that is  $\eta=x-z+\eta(1-x)$, shortly: $z=x-\eta x$. 
From $z=\mu  x$ and $z=x-\eta x$ it follows that $\mu x=x-\eta x$, that is $x=(\mu+\eta)x$. Then, as $x>0$, it follows that $\eta=1-\mu$. In Table \ref{TAB:DnDgCgA} we show that  $D|(C|A)+ \no{D}|(C|A)$ is constant and coincides with 1 in all possible cases.
	\begin{table}[!ht]
		\centering\footnotesize
		\begin{tabular}{l|c|c|c}		
						 Constituent&  $D|(C|A)$ & $\no{D}|(C|A)$ & $D|(C|A)+ \no{D}|(C|A)$ \\ 
			\hline
			$ACD$           & $1$    &   $0$   & $1$\\
			$AC\no{D}$      & $0$     &   $1$  & $1$\\
			$A\no{C}$     & $\mu$     &   $1-\mu$  & $1$\\
			$\no{A}D$  & $x+\mu-\mu x$  & $(1-\mu)(1-x)=1-\mu-x+\mu x$   &$1$  \\
			$\no{A}\no{D}$   & $\mu-\mu x$  & $x+(1-\mu)(1-x)=1-\mu+\mu x$   &$1$
		\end{tabular}
\\[.5em]	
		\caption{Numerical values of $D|(C|A)$, $\no{D}|(C|A)$, and $D|(C|A)+\no{D}|(C|A)$ in the case when $x=P(C|A)>0$  and $\mu=\prev[D|(C|A)]$. By coherence, $\prev[\no{D}|(C|A)]=1-\mu$. See also equations (\ref{EQ:DgCgA}) and (\ref{EQ:noDgCgA}).  \label{TAB:DnDgCgA}	}
	\end{table}	\\
Case $(ii)$. As $x=0$, from Theorem \ref{THM:DgCgADynamic} (see  equation (\ref{EQ:DgCgAifx=0})), it holds that $D|(C|A)=D|AC$. Likewise, it holds that $\no{D}|(C|A)=\no{D}|AC$. Therefore, $D|(C|A)+\no{D}|(C|A)=D|AC+\no{D}|AC=\Omega|AC=1$.
\end{proof}
We observe that in case of some logical dependencies among the events $A,C,D$ some constituents may become impossible, in which case some lines in Table \ref{TAB:DnDgCgA} disappear; but, of course, Theorem  \ref{THM:NEGDgCgA} still holds. 

\noindent Notice that, based on  Theorem \ref{THM:NEGDgCgA},  a natural notion of  negation  for $D|(C|A)$ is  given by $\no{D}|(C|A)$,
which  corresponds to the narrow-scope negation of  conditionals. We recall that also the (non-nested) \emph{conditional}  is traditionally negated by the narrow-scope negation of conditionals, i.e., the negation of $C|A$ is defined by $\no{C}|A=1-C|A$ (see Section \ref{sect:2.1}).
 \section{The iterated conditional $A|(C|A)$}
\label{SEC:AgCgA}
In this section we focus the analysis on the  iterated conditional $A|(C|A)$, which is a special case of $D|(C|A)$ when $D=A$.
After describing  $A|(C|A)$ as a conditional random quantity (with a dynamic conditioning event), we obtain the equality $\prev[A|(C|A)]=P(A)$, under the assumption $P(C|A)>0$. We also illustrate an urn experiment where such equality is natural. Then, we study some relations among $A|AC$, $A|(C|A)$, and $A|(AC\vee \no{A})$ by showing in particular that 	$A|(AC\vee \no{A})\leq   A|(C|A)\leq A|AC$. In addition, we give further results related with the equality $\prev[A|(C|A)]=P(A)$ and then we consider the Sue example.
\subsection{The iterated conditional $A|(C|A)$ and its prevision}
We recall that $A\wedge (C|A)=AC$, indeed
\begin{equation}\label{EQ:AandCgAisAC}
A\wedge (C|A)=	\left\{\begin{array}{ll}
1, &\mbox{ if }  AC \mbox{ is true,}\\
0, &\mbox{ if }  A\no{C}\mbox{ is true,}\\
0, &\mbox{ if }  \no{A} \mbox{ is true;}
\end{array}
\right.=	\left\{\begin{array}{ll}
1, &\mbox{ if }  AC \mbox{ is true,}\\
0, &\mbox{ if } \no{A}\vee \no{C}\mbox{ is true;}
\end{array}
\right.=AC.
\end{equation}
 By setting $\mu=\prev[A|(C|A)]$ and $x=P(C|A)$, from (\ref{EQ:SHORTITER-COND}) and (\ref{EQ:AandCgAisAC}) we obtain
\begin{equation}\label{EQ:AgCgA}
\small
A|(C|A)=A\wedge (C|A)+\mu(1-C|A)=AC+\mu(1-C|A)=	\left\{\begin{array}{ll}
1, &\mbox{ if }  AC \mbox{ is true,}\\
\mu(1-x), &\mbox{ if }  \no{A} \mbox{ is true,}\\
\mu, &\mbox{ if }  A\no{C}\mbox{ is true.}\\
\end{array}
\right.
\end{equation}
Then, by applying Theorem \ref{THM:DgCgADynamic} with $D=A$,  we obtain
\begin{corollary}\label{COR:AgCgADynamic}
	Let  a coherent assessment  $(x,\mu)$ on $\{C|A,A|(C|A)\}$ be given.
	The iterated conditional $A|(C|A)$ is  the  conditional random quantity 
	\begin{equation}\label{EQ:AgCgADynamic}
	\begin{array}{lll}
	A|(C|A)&=&
	[AC+\mu(1-x)\no{A}]|[AC\vee (x>0)\no{A}]=\\ \\
	&=&\left\{
	\begin{array}{ll}
	A|AC, & \mbox{ if } x=0,\\ 
	\,[AC+\mu(1-x)\no{A}]|(AC\vee \no{A}), & \mbox{ if } 0<x<1,\\
	A|(AC\vee\no{A}),& \mbox{ if } x=1.\\
	\end{array}
	\right.
	\end{array}
	\end{equation}	
\end{corollary}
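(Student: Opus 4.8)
The plan is to derive the statement as a direct specialization of Theorem \ref{THM:DgCgADynamic} to the case $D=A$, so that essentially all the work reduces to simplifying the logical products appearing in the general representation (\ref{EQ:DgCgADynamic}). First I would record the three elementary identities valid when $D=A$: since $A\wedge A=A$, $\no{A}\wedge\no{A}=\no{A}$, and $A\wedge\no{A}=\emptyset$, one has $ACD=AC$, $\no{A}D=\emptyset$, and $\no{A}\,\no{D}=\no{A}$. The conditioning event $AC\vee(x>0)\no{A}$ does not depend on $D$ and is therefore left unchanged by the substitution.

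Next I would insert these identities into the numerator of (\ref{EQ:DgCgADynamic}). The key simplification is that the middle term $(x+\mu(1-x))\no{A}D$ multiplies the impossible event $\no{A}D=\emptyset$ and hence drops out completely; what survives is $ACD+\mu(1-x)\no{A}\,\no{D}=AC+\mu(1-x)\no{A}$. This yields $A|(C|A)=[AC+\mu(1-x)\no{A}]\,|\,[AC\vee(x>0)\no{A}]$, and reading off the three regimes $x=0$, $0<x<1$, and $x=1$ from the corresponding case split of Theorem \ref{THM:DgCgADynamic} produces exactly the three lines of (\ref{EQ:AgCgADynamic}).

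As a consistency check I would confront this with the explicit three-valued representation (\ref{EQ:AgCgA}), whose values are $1$ on $AC$, $\mu(1-x)$ on $\no{A}$, and $\mu$ on $A\no{C}$. When $0<x<1$ the bet is called off precisely on $A\no{C}$ (the paid amount $\mu$ is returned whatever $\mu$ is), so the conditioning event is $\no{A\no{C}}=AC\vee\no{A}$ and the numerator carries the values $1$ on $AC$ and $\mu(1-x)$ on $\no{A}$, in agreement with $AC+\mu(1-x)\no{A}$. When $x=0$ the values on $\no{A}$ and $A\no{C}$ both collapse to $\mu$, so the object is called off on $\no{A}\vee A\no{C}=\no{A}\vee\no{C}$ and reduces to the conditional event $A|AC$ (which by coherence equals $1$, since $AC\subseteq A$); when $x=1$ the value on $\no{A}$ becomes $0$ while $A\no{C}$ remains the call-off region, recovering $A|(AC\vee\no{A})$, the identification of the relevant previsions being supplied by Theorem \ref{THM:EQ-CRQ}. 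Since the dynamic conditioning-event structure and the call-off behaviour are already established in Theorem \ref{THM:DgCgADynamic}, there is no genuine obstacle here; the only point demanding care is to confirm that the vanishing of the $\no{A}D$ term does not alter the conditioning event, which it does not, precisely because that event is independent of $D$.
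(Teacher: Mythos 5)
Your proposal is correct and follows essentially the same route as the paper, which obtains the corollary precisely by applying Theorem \ref{THM:DgCgADynamic} with $D=A$; the substitutions $ACD=AC$, $\no{A}D=\emptyset$, $\no{A}\,\no{D}=\no{A}$ and the observation that the conditioning event is unaffected are exactly the simplifications needed. Your additional consistency check against the three-valued representation (\ref{EQ:AgCgA}) is sound but not required.
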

We observe that $A|AC=1$ because $P(A|AC)=1$. The possible values of $A|AC$, $A|(C|A)$, and  $A|(AC\vee \no{A})$ are given in Table \ref{TAB:AgCgA}.
	\begin{table}[!h]
	\centering
	\begin{tabular}{c|c|lcc|cc}
		$C_h$ & $A|(AC\vee \no{A})$ &        &  $A|(C|A)$   &       & $A|AC$ &  \\
		&                     & $x=1$  &   $0<x<1$    & $x=0$ &        &  \\ \hline
		$AC$     &         $1$         & $1$ \, &     $1$      &  $1$  &  $1$   &  \\
		$\no{A}$   &         $0$         & $0$  &    $\mu(1-x)$   & $\mu$ & $1$  &  \\
		$A\no{C}$  &        $\nu$        & $\mu$  & $\mu$         & $\mu$  &  $1$   &
	\end{tabular}
	\\[.5em]	
	\caption{Possible values of $A|(AC\vee \no{A})$, $A|(C|A)$, and $A|AC$ relative to the constituents $C_h$. Here, $x=P(C|A)$,  $\nu=P[A|(AC\vee \no{A})]$, and
		$\mu=\prev[A|(C|A)]$. 
The iterated conditional $A|(C|A)$ coincides with $A|AC=1$ when $x=0$, and it coincides with $A|(AC\vee \no{A})$ when $x=1$.  \label{TAB:AgCgA}}
\end{table}	

The next result shows that  $\prev[A|(C|A)]=P(A)$,  when $P(C|A)>0$.
\begin{theorem}\label{THM:pA=pAgCgA}
	Let $A$ and $C$ be two events  with  $AC\neq \emptyset$. 
	If $P(C|A)>0$, then $\prev[A|(C|A)]=P(A)$.
\end{theorem}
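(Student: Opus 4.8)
The plan is to compute $\mu=\prev[A|(C|A)]$ directly from the linear representation of the iterated conditional already in hand, and then to turn the resulting identity into a statement about $P(A)$ via the compound probability theorem. The only structural fact I need is the one recorded in~(\ref{EQ:AandCgAisAC}), namely $A\wedge(C|A)=AC$, which by~(\ref{EQ:AgCgA}) gives the representation $A|(C|A)=AC+\mu(1-C|A)$. Everything else is an exploitation of the linearity of prevision together with coherence.

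First I would apply the linearity of prevision to both sides of $A|(C|A)=AC+\mu(1-C|A)$. Writing $x=P(C|A)$ and using $\prev(AC)=P(AC)$ and $\prev(C|A)=x$, this yields $\mu=P(AC)+\mu(1-x)$, and hence $\mu x=P(AC)$. Equivalently, one may simply quote the product relation~(\ref{EQ:PRODUCT}) in the special case $B|K=A|\top$ and $A|H=C|A$: there $z=\prev[A\wedge(C|A)]=\mu x$, and since $A\wedge(C|A)=AC$ we again read off $z=P(AC)$, so $\mu x=P(AC)$. Either route gives the same identity, which is the computational core of the proof.

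Next I would invoke the compound probability theorem, a consequence of coherence: the assessment on $A$, $C|A$, and $AC$ is coherent only if $P(AC)=P(A)\,P(C|A)=x\,P(A)$. Substituting this into $\mu x=P(AC)$ gives $\mu x=x\,P(A)$. The crucial—and only delicate—step is the final cancellation: because we assume $P(C|A)=x>0$, we may divide by $x$ and conclude $\mu=P(A)$, i.e.\ $\prev[A|(C|A)]=P(A)$. I expect the main obstacle to be conceptual rather than computational, namely making sure that the coherence-based multiplication rule is legitimately in force (so that $P(AC)=x\,P(A)$ even without presupposing $P(A)>0$) and that the hypothesis $x>0$ is used exactly at the cancellation. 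That the hypothesis is genuinely needed is confirmed by Corollary~\ref{COR:AgCgADynamic}: when $x=0$ the identity $\mu x=x\,P(A)$ is vacuous and indeed $A|(C|A)=A|AC=1$, so $\prev[A|(C|A)]=1\neq P(A)$ in general.
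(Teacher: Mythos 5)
Your proposal is correct and follows essentially the same route as the paper's proof: it uses $A\wedge(C|A)=AC$ together with the product relation~(\ref{EQ:PRODUCT}) to get $\mu\,P(C|A)=P(AC)$, then the compound probability theorem $P(AC)=P(A)P(C|A)$ and cancellation of $P(C|A)>0$. Your closing observation about the $x=0$ case matches the remark the paper makes immediately after the theorem.
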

\begin{proof}
	By applying equation (\ref{EQ:PRODUCT}), with $(B|K)|(A|H)=A|(C|A)$, it holds that   
\begin{equation}	\label{EQ:PAandCgA}
\prev[A\wedge (C|A)]=\prev[A|(C|A)] P(C|A).
\end{equation}
Moreover, $A\wedge (C|A)=AC$ and hence 
\begin{equation}	\label{EQ:PAC}
\prev[A\wedge (C|A)]=P(AC)=P(A)P(C|A).
\end{equation}
Thus, from (\ref{EQ:PAandCgA}) and (\ref{EQ:PAC}), it follows that 
\begin{equation}\label{EQ:PAgCgA=PA}
\prev[A|(C|A)] P(C|A)=P(A)P(C|A),
\end{equation}
from which it follows  $\prev[A|(C|A)]=P(A)$, when $P(C|A)>0$.
\end{proof}
Notice that  the two objects 
$A|(C|A)$ and $A$ are generally not equivalent (even if  $\prev[A|(C|A)]=P(A)$ when  $P(C|A)>0$)\footnote{Under the material conditional interpretation, where  the conditional \emph{if $\bigcirc$ then  $\Box$} is looked at as the event $\no{\bigcirc}\vee \Box$,   the iterated conditional \emph{if (if $A$ then $C$) then $A$} coincides with $A$. Indeed, in this case, the iterated conditional \emph{if (if $A$ then $C$) then $A$} would coincide with $(\overline{\no{A}\vee C}) \vee A=A\no{C}\vee A=A$. 
 }. Indeed, $A$ is an event, while as shown  in (\ref{EQ:AgCgADynamic}) the iterated conditional $A|(C|A)$  is in general a conditional random quantity.
We also observe that,  when $P(C|A)=0$, formula (\ref{EQ:PAgCgA=PA}) becomes $0=0$, but
 in general  $\prev[A|(C|A)]\neq P(A)$ because from (\ref{EQ:AgCgADynamic}) one has $\prev[A|(C|A)]=1$ and usually $P(A)< 1$. Thus, when $P(C|A)=0$, the equality $\prev[A|(C|A)]=P(A)$ holds only if $P(A)=1$. We also observe that the assessment $(y,\mu)$ on $\{A, A|(C|A)\}$ is coherent for every $(y,\mu)\in[0,1]^2$, while $y=\mu$ under the constraint $P(C|A)>0$. 
\begin{remark}\label{REM:URNS}
The equality  $\prev[A|(C|A)] = P(A)$, when $P(C|A)>0$, 
appears natural in many  examples. Recall,
for instance, 
the experiment  where a ball is drawn from an urn of unknown composition. If we consider the events $A=$ ``the urn contains 9 white balls and 1 black ball", $C=$ ``the  (drawn) ball is white", normally we  evaluate $P(C|A)=0.9$.
 Moreover, the degree of belief in the hypothesis $A$ seems to be completely ``uncorrelated'' with  the conditional \emph{if $A$ then $C$}.  Then, the equality
$\prev[A|(C|A)]=P(A)$ is reasonable,	whatever value we specify for $P(A)$. 	The same happens if, for instance, $A=$ ``the urn contains 2 white balls and 8 black balls", in which case $P(C|A)=0.2$. 
We notice that on the one hand, even if the equality $\prev[A|(C|A)]=P(A)$ may appear sometimes counterintuitive, it is a result of our theory, where the new objects of conjoined  and iterated conditionals are introduced. On the other hand, we do not see any motivations why  the conditional \emph{if $A$ then $C$} should modify the degree of belief in $A$. We will examine later (see  Section \ref{SEC:AgCgAandC} and  Section \ref{SEC:FURTHERITER}) some examples, where  $A|(C|A)$ is replaced by a suitable generalized iterated conditional (which makes explicit some latent information); then, as a consequence, the
 previous  equality will be replaced by an inequality. For instance, in Section \ref{SEC:AgCgAandC}, the latent information  will be made explicit by replacing $A|(C|A)$ by $A|((C|A)\wedge C)$; then, it will be shown that 
 $\prev[A|((C|A)\wedge C)]\geq P(A)$.
\end{remark}

\subsection{Some relations  among $A|AC$, $A|(C|A)$, and $A|(AC\vee \no{A})$}
In this subsection we examine  an order relation  among $A|AC$, $A|(C|A)$ and $A|(AC\vee \no{A})$.
\begin{theorem}\label{THM:AgCgAINEQ}
	Let $A$ and $C$ be two  events, with $AC\neq \emptyset$.  Then, the following order relation holds
	\begin{equation}\label{EQ:AgCgAINEQ}
	A|(AC\vee \no{A})\leq   A|(C|A)\leq A|AC=1.
	\end{equation}
\end{theorem}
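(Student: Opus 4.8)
The plan is to prove the chain of inequalities in (\ref{EQ:AgCgAINEQ}) by the constituent-wise comparison of the three objects, invoking the monotonicity principle recorded in Remark~\ref{REM:INEQ-CRQ}: if two conditional random quantities satisfy $X|H \leq Y|K$ whenever $H\vee K$ is true, then the inequality holds in all cases. First I would read off the values of all three objects from Table~\ref{TAB:AgCgA}, which already tabulates $A|(AC\vee\no{A})$, $A|(C|A)$, and $A|AC$ against the three constituents $AC$, $\no{A}$, and $A\no{C}$. The rightmost inequality $A|(C|A)\leq A|AC=1$ is essentially immediate, since $A|AC=1$ (as $P(A|AC)=1$) and, as noted just before Theorem~\ref{THM:pA=pAgCgA} and in the discussion following (\ref{EQ:SHORTITER-COND}), every value of the iterated conditional $A|(C|A)$ lies in $[0,1]$; so $A|(C|A)\leq 1 = A|AC$ holds trivially in all cases.

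The substantive part is the leftmost inequality $A|(AC\vee\no{A})\leq A|(C|A)$. Here I would distinguish the cases $x=P(C|A)=0$, $0<x<1$, and $x=1$, exactly as in Corollary~\ref{COR:AgCgADynamic}. When $x=1$ the two objects coincide (both equal $A|(AC\vee\no{A})$), so the inequality is an equality. When $0<x<1$, I would compare the values constituent by constituent using Table~\ref{TAB:AgCgA}: on $AC$ both equal $1$; on $A\no{C}$ one has $\nu=P[A|(AC\vee\no{A})]$ versus $\mu=\prev[A|(C|A)]$; and on $\no{A}$ one has $0$ versus $\mu(1-x)\geq 0$. The constituents on which the conditioning events differ must be handled so that the hypothesis of Remark~\ref{REM:INEQ-CRQ} applies, namely comparing the values only where the disjunction of the two conditioning events is true. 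The key numerical fact I expect to need is that $\nu\leq\mu$: since $\prev[A|(C|A)]=P(A)$ when $x>0$ by Theorem~\ref{THM:pA=pAgCgA}, we have $\mu=P(A)$, and one checks that the coherent value $\nu=P[A|(AC\vee\no{A})]$ cannot exceed $P(A)$, because $A\subseteq AC\vee\no{A}$ forces, by a Goodman--Nguyen-type comparison, $P[A|(AC\vee\no{A})]\leq P(A|\Omega)$ in the relevant sense, or more directly because on the conditioning event $AC\vee\no{A}$ the indicator of $A$ sits below the relevant combination.

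The main obstacle I anticipate is the constituent $A\no{C}$, where the two conditioning events disagree ($A\no{C}\subseteq AC\vee\no{A}$ but $A\no{C}$ is the called-off case for $A|(C|A)$ when $0<x<1$): one must verify $\nu\leq\mu$ rather than simply reading off a trivial inequality, and this requires the coherence constraint linking $\nu$, $\mu=P(A)$, and $x$. I would establish $\nu\leq\mu$ by expressing both as conditional probabilities/previsions and using that enlarging the conditioning event from $AC\vee\no{A}$ cannot raise the conditional probability of $A$ above its unconditional value in this configuration; once $\nu\leq\mu$ and $\mu(1-x)\geq 0$ are in hand, all constituent-wise comparisons hold on $AC\vee\no{A}\vee A\no{C}=\Omega$, so Remark~\ref{REM:INEQ-CRQ} yields $A|(AC\vee\no{A})\leq A|(C|A)$ in all cases. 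Finally, the case $x=0$ is settled by Corollary~\ref{COR:AgCgADynamic}, since then $A|(C|A)=A|AC=1$ and the inequality $A|(AC\vee\no{A})\leq 1$ is automatic. Combining the three cases with the already-established right-hand inequality completes the proof of (\ref{EQ:AgCgAINEQ}).
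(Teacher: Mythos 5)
Your overall architecture matches the paper's: the same three-way case split on $x=P(C|A)$, the cases $x=0$ and $x=1$ dispatched via Corollary~\ref{COR:AgCgADynamic}, the right-hand inequality read off from $A|AC=1$ and $A|(C|A)\in[0,1]$, and Remark~\ref{REM:INEQ-CRQ} invoked for the middle case. However, your treatment of the case $0<x<1$ contains two false set-theoretic claims and, as a result, manufactures an obstacle that is not there. You assert $A\no{C}\subseteq AC\vee\no{A}$; in fact $A\no{C}=\no{AC\vee\no{A}}$, i.e.\ $A\no{C}$ is precisely the complement of the common conditioning event. Since for $0<x<1$ \emph{both} $A|(AC\vee\no{A})$ and $A|(C|A)=[AC+\mu(1-x)\no{A}]|(AC\vee\no{A})$ are conditional random quantities given the \emph{same} event $AC\vee\no{A}$, the hypothesis of Remark~\ref{REM:INEQ-CRQ} only requires the comparison where $AC\vee\no{A}$ is true, namely on $AC$ ($1$ vs $1$) and on $\no{A}$ ($0$ vs $\mu(1-x)\geq 0$). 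The inequality $\nu\leq\mu$ on $A\no{C}$ is then a \emph{consequence} of the Remark, not a prerequisite. You also justify $\nu\leq P(A)$ by ``$A\subseteq AC\vee\no{A}$,'' which is likewise false ($A\no{C}$ witnesses the failure); the correct route is the Goodman--Nguyen relation $A|(AC\vee\no{A})\subseteq A|\Omega$, which holds because $A\wedge(AC\vee\no{A})=AC\subseteq A$ and $\no{A}\subseteq\no{A}$. So your detour through $\mu=P(A)$ and $\nu\leq P(A)$ is salvageable but rests on a wrong premise as written, and is in any case unnecessary.

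The paper avoids all of this by subtracting the two quantities directly: since $A|(AC\vee\no{A})=AC|(AC\vee\no{A})$,
\begin{equation*}
A|(C|A)-A|(AC\vee\no{A})=[\mu(1-x)\no{A}]|(AC\vee\no{A})\geq 0,
\end{equation*}
because the restricted payoff $\mu(1-x)\no{A}$ is nonnegative. This is the clean version of the remark you gesture at (``the indicator of $A$ sits below the relevant combination'') but do not develop. I recommend replacing your $A\no{C}$ analysis with this one-line computation, or at minimum correcting the two inclusions and noting that $A\no{C}$ lies outside the conditioning event, so no separate comparison is needed there.
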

\begin{proof}
	We set $P(C|A)=x$,  $P(A|(AC\vee \no{A}))=\nu$, and $\prev(A|(C|A))=\mu$.
	We distinguish three cases: $(i)$ $x=0$; $(ii)$ $0<x<1$; $(iii)$ $x=1$. 
\\
Case $(i)$. As $x=0$, from (\ref{EQ:AgCgADynamic}) $A|(C|A)=A|AC=1$. Then, the inequalities in (\ref{EQ:AgCgAINEQ}) are satisfied.\\
Case $(ii)$. As $0<x<1$, from (\ref{EQ:AgCgADynamic}) it holds that $A|(C|A)=[AC+\mu(1-x)\no{A}]|(AC\vee \no{A})$. Then, by observing that $A|(AC\vee \no{A})=AC|(AC\vee \no{A})$ and that $\mu(1-x)\no{A}\geq 0$, it follows 
\begin{equation}\label{EQ:AgCgAdiff}
\begin{array}{ll}
A|(C|A)-A|(AC\vee \no{A})=[AC+\mu(1-x)\no{A}]|(AC\vee \no{A})-AC|(AC\vee \no{A})=\\
=[AC+\mu(1-x)\no{A}-AC]|(AC\vee \no{A})=[\mu(1-x)\no{A}]|(AC\vee \no{A})\geq 0.
\end{array}
\end{equation}
  Then, the inequalities in (\ref{EQ:AgCgAINEQ}) are satisfied. \\
Case $(iii)$. As $x=1$, from  (\ref{EQ:AgCgADynamic}) it holds that $A|(C|A)=A|(AC\vee \no{A})$. Then, the inequalities in (\ref{EQ:AgCgAINEQ}) are satisfied.
\end{proof}	
We remark that from (\ref{EQ:AgCgAINEQ}) it follows that
	\begin{equation}\label{EQ:PrevAgCgAINEQ}
	P[A|(AC\vee \no{A})] \leq  \prev[A|(C|A)]\leq P(A|AC)=1.
\end{equation}
\begin{remark} 	
We recall that 
$   AC \leq C|A \leq AC \vee \no{A}$. Then, 
symmetrically (see Theorem \ref{THM:AgCgAINEQ}), 
it holds that
\[
A|AC \geq A|(C|A) \geq A|(AC \vee \no{A}).
\]
In other words, the iterated conditional $A|(C|A)$ is an intermediate object between $A|AC$ (which is obtained when in the iterated conditional we  replace the antecedent $C|A$ by $AC$) and $A|(AC \vee \no{A})$ (which is obtained  when we replace $C|A$ by the associated material conditional $AC \vee \no{A}$).	
\end{remark} 

In the next result we illustrate the relation among $P(C|A)$, $	P[A|(AC\vee \no{A})]$, and $\prev[A|(C|A)]$.
\begin{theorem}\label{THM:AgCgAdiff}
	Let $A$ and $C$ be two  events, with $AC\neq \emptyset$. We set $P(C|A)=x$, $	P[A|(AC\vee \no{A})]=\nu$, and $\prev[A|(C|A)]=\mu$. Then,
\begin{equation}\label{EQ:munux}
\mu=\nu+\mu(1-x)(1-\nu).
\end{equation}
\end{theorem}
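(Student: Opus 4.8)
The plan is to exploit the difference identity already computed in the proof of Theorem~\ref{THM:AgCgAINEQ} and then simply apply the linearity of prevision. Recall from equation~(\ref{EQ:AgCgAdiff}) that, in the case $0<x<1$, one has
\begin{equation*}
A|(C|A)-A|(AC\vee \no{A})=[\mu(1-x)\no{A}]|(AC\vee \no{A}).
\end{equation*}
The whole argument reduces to taking the prevision of both sides of this equality, so the bulk of the work has in effect already been done when establishing the order relation~(\ref{EQ:AgCgAINEQ}).

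First I would compute the prevision of the left-hand side, which by the very definitions of $\mu$ and $\nu$ equals $\prev[A|(C|A)]-P[A|(AC\vee \no{A})]=\mu-\nu$. Next I would treat the right-hand side. Since $\mu(1-x)$ is a constant, by the linearity of prevision the conditional random quantity $[\mu(1-x)\no{A}]|(AC\vee \no{A})$ coincides with $\mu(1-x)\,[\no{A}|(AC\vee \no{A})]$, whose prevision is $\mu(1-x)\,\prev[\no{A}|(AC\vee \no{A})]$. Because the negation of the conditional event $A|(AC\vee \no{A})$ is the narrow-scope negation $\no{A}|(AC\vee \no{A})=1-A|(AC\vee \no{A})$, its prevision is $1-\nu$. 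Hence the right-hand side has prevision $\mu(1-x)(1-\nu)$, and equating the two sides yields $\mu-\nu=\mu(1-x)(1-\nu)$, which is exactly~(\ref{EQ:munux}).

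Finally I would verify that~(\ref{EQ:munux}) also holds in the two boundary cases, so that the statement is unconditional in $x$. When $x=1$, Corollary~\ref{COR:AgCgADynamic} gives $A|(C|A)=A|(AC\vee \no{A})$, hence $\mu=\nu$, while the term $\mu(1-x)(1-\nu)$ vanishes, so the identity is immediate. When $x=0$, the same corollary gives $A|(C|A)=A|AC=1$, hence $\mu=1$, and the right-hand side becomes $\nu+1\cdot(1-\nu)=1=\mu$ regardless of the value of $\nu$. The main delicate point is the evaluation of the prevision of the right-hand conditional random quantity: one must justify pulling the constant factor $\mu(1-x)$ outside the prevision and correctly identify $\prev[\no{A}|(AC\vee \no{A})]=1-\nu$; everything else is routine bookkeeping over the three constituents $AC$, $A\no{C}$, and $\no{A}$.
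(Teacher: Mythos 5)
Your proposal is correct and follows essentially the same route as the paper: the paper's proof also splits into the cases $x=0$, $0<x<1$, $x=1$, and in the middle case writes $A|(C|A)=A|(AC\vee \no{A})+\mu(1-x)\,\no{A}|(AC\vee \no{A})$ and applies linearity of prevision with $P[\no{A}|(AC\vee\no{A})]=1-\nu$, which is exactly your step of taking previsions in the difference identity (\ref{EQ:AgCgAdiff}). Your treatment of the boundary cases matches the paper's as well.
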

\begin{proof}
	We distinguish three cases: $(i)$ $x=0$; $(ii)$ $0<x<1$; $(iii)$ $x=1$. 
\\
Case $(i)$. As $x=0$,
formula  (\ref{EQ:munux}) becomes $\mu=\nu+\mu(1-\nu)$, which is satisfied because, from (\ref{EQ:AgCgADynamic}), $A|(C|A)=A|AC=1$ and hence  $\mu=1$. \\
Case $(ii)$. As $0<x<1$, from (\ref{EQ:AgCgADynamic}) it holds that $A|(C|A)=[AC+\mu(1-x)\no{A}]|(AC\vee \no{A})$. Then, as $AC|(AC\vee \no{A})=A|(AC\vee \no{A})$, it follows that
\begin{equation}\label{}
\begin{array}{ll}
A|(C|A)=[AC+\mu(1-x)\no{A}]|(AC\vee \no{A})=A|(AC\vee \no{A})+\mu(1-x)\no{A}|(AC\vee \no{A}).\\
\end{array}
\end{equation}
Then, 
\[
\mu=\prev[A|(C|A)]=
P[A|(AC\vee\no{A})]+\mu(1-x)P[\no{A}|(AC\vee \no{A})]=\nu+\mu(1-x)(1-\nu),
\]
that is (\ref{EQ:munux}).
 \\
Case $(iii)$. 
As $x=1$,
formula  (\ref{EQ:munux}) becomes $\mu=\nu$, which is satisfied because,  from (\ref{EQ:AgCgADynamic}), $A|(C|A)=A|(AC\vee \no{A})$.
\end{proof}

\begin{remark} By exploiting (\ref{EQ:munux}), we can illustrate  another way  to establish  the result of Theorem \ref{THM:pA=pAgCgA}, that is 	$\prev[A|(C|A)]=P(A)$, when $P(C|A)>0$. Indeed,
(as $P(C|A)=x>0$) from (\ref{EQ:munux}) we obtain $\mu=\frac{\nu}{\nu+x-x\nu}$.
Moreover, by setting $P(A)=y$, it holds that
\[
\nu=P(A|(AC\vee \no{A})=\frac{P(C|A)P(A)}{P(C|A)P(A)+P(\no{A})}=\frac{xy}{xy+1-y}.
\]
Then,
\[
\mu=\frac{\frac{xy }{xy+1-y}}{\frac{xy }{xy+1-y}+x-x\frac{xy }{xy+1-y}}=\frac{xy}{xy+x^2y+x-xy-x^2y}=\frac{xy}{x}=y,
\]
that is:  $\prev[A|(C|A)]=P(A)$.
\end{remark}
\subsection{Further results related to the equality $\prev[A|(C|A)]=P(A)$}
We now illustrate some formulas which are obtained from  the equality $\prev[A|(C|A)]=P(A)$ by replacing some events  by their negation. 
\begin{remark}	\label{REM:AgnoCgA}
	Based on  Theorem \ref{THM:pA=pAgCgA}, by replacing  the event
	$C$  by $\no{C}$ or $A$  by $\no{A}$ it holds that 
	\begin{enumerate}		
		\item $\prev[A|(\no{C}|A)]=P(A)$,  if $P(\no{C}|A)>0$ (where  $A\no{C}\neq \emptyset$);
		\item $\prev[\no{A}|(C|\no{A})]=P(\no{A})$,  if $P(C|\no{A})>0$ (where  $\no{A}C\neq \emptyset$); 
		\item $\prev[\no{A}|(\no{C}|\no{A})]=P(\no{A})$,  if $P(\no{C}|\no{A})>0$ (where  $\no{A}\no{C}\neq \emptyset$).
	\end{enumerate}
\end{remark}
We give below a further result  where the consequent $A$ in the iterated conditional $A|(C|A)$ is replaced by $\no{A}$.
\begin{theorem}\label{THM:pAgnCgA}
	Let  two  events $A$ and $C$ be given,   with $AC\neq \emptyset$.   Then,
	$\prev[\no{A}|(C|A)]=P(\no{A})$,  if $P(C|A)>0$.
\end{theorem}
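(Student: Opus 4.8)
The plan is to exploit the narrow-scope negation identity that has just been established. The quantity $\no{A}|(C|A)$ is exactly $\no{D}|(C|A)$ with $D=A$, so the quickest route is to invoke Theorem~\ref{THM:NEGDgCgA}, which gives the identity of conditional random quantities $\no{A}|(C|A)=1-A|(C|A)$ (valid since $AC\neq\emptyset$, independently of the value of $x=P(C|A)$). Taking previsions of both sides and using the linearity of $\prev$ together with $\prev(1)=1$ yields $\prev[\no{A}|(C|A)]=1-\prev[A|(C|A)]$. Then, under the hypothesis $P(C|A)>0$, Theorem~\ref{THM:pA=pAgCgA} supplies $\prev[A|(C|A)]=P(A)$, so that $\prev[\no{A}|(C|A)]=1-P(A)=P(\no{A})$, as claimed.

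As a self-contained alternative that mirrors the proof of Theorem~\ref{THM:pA=pAgCgA}, I would instead apply the compound-prevision identity~(\ref{EQ:PRODUCT}) directly to the iterated conditional $\no{A}|(C|A)$, identifying $(B|K)|(A|H)=\no{A}|(C|A)$ with $B|K=\no{A}$ and $A|H=C|A$; this gives $\prev[\no{A}\wedge (C|A)]=\prev[\no{A}|(C|A)]\,P(C|A)$. The key intermediate step is to identify the conjunction $\no{A}\wedge (C|A)$. Using the additive decomposition $C|A=A\wedge(C|A)+\no{A}\wedge(C|A)$ (already employed in the proof of Theorem~\ref{THM:NEGDgCgA}), together with $A\wedge(C|A)=AC$ from~(\ref{EQ:AandCgAisAC}) and the representation $C|A=AC+x\no{A}$ with $x=P(C|A)$, one obtains $\no{A}\wedge(C|A)=x\no{A}$. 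Hence $\prev[\no{A}\wedge(C|A)]=x\,P(\no{A})=P(C|A)\,P(\no{A})$, and substituting back gives $\prev[\no{A}|(C|A)]\,P(C|A)=P(C|A)\,P(\no{A})$; dividing by $P(C|A)>0$ completes the argument.

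The only point requiring care is the hypothesis $P(C|A)>0$: both arguments effectively divide by $x=P(C|A)$ (explicitly in the second approach, and through the scope of Theorem~\ref{THM:pA=pAgCgA} in the first). When $x=0$ the conclusion fails in general, since then $A|(C|A)=A|AC=1$ forces $\no{A}|(C|A)=0$, which coincides with $P(\no{A})$ only in the degenerate case $P(A)=1$. Apart from this caveat, the sole substantive computation is the evaluation $\no{A}\wedge(C|A)=x\no{A}$, which is immediate from the decomposition of $C|A$; I therefore do not expect any genuine obstacle, and the first (negation-based) route is the most economical.
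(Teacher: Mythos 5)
Your first (negation-based) argument is exactly the paper's own proof: it invokes Theorem~\ref{THM:NEGDgCgA} to get $\no{A}|(C|A)=1-A|(C|A)$, takes previsions, and applies Theorem~\ref{THM:pA=pAgCgA} under the hypothesis $P(C|A)>0$. Your self-contained alternative via~(\ref{EQ:PRODUCT}) and the computation $\no{A}\wedge(C|A)=x\no{A}$ is also correct, and your caveat about the case $x=0$ matches the paper's remarks, so there is nothing to fix.
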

\begin{proof}
By Theorem \ref{THM:NEGDgCgA}, $\no{A}|(C|A)=1-A|(C|A)$. Hence, $\prev[\no{A}|(C|A)]=1-\prev[A|(C|A)]$. If ${P(C|A)>0}$, by Theorem \ref{THM:pA=pAgCgA}, $\prev[A|(C|A)]=P(A)$. Therefore, $\prev[\no{A}|(C|A)]=1-P(A)=P(\no{A})$. 
\end{proof}	
\begin{remark}	
	Based on the proof of  Theorem \ref{THM:pAgnCgA}, and by Remark \ref{REM:AgnoCgA}, we obtain similar formulas by replacing the consequent  $A$  by $\no{A}$ (and hence  $\no{A}$ by $A$):
	\begin{enumerate}		
		\item $\prev[\no{A}|(\no{C}|A)]=P(\no{A})$,  if $P(\no{C}|A)>0$ (where $A\no{C}\neq \emptyset$); 		
		\item $\prev[A|(C|\no{A})]=P(A)$,  if $P(C|\no{A})>0$ (where $\no{A}C\neq \emptyset$);
		\item $\prev[A|(\no{C}|\no{A})]=P(A)$,  if $P(\no{C}|\no{A})>0$ (where $\no{A}\no{C}\neq \emptyset$). 
	\end{enumerate}
\end{remark}

\subsection{On the  Sue example}\label{SEC:sue}
We will now consider the Sue example, where the events  $A$ and $C$ are defined as follows: $A=$\emph{Sue passes the exam}, $C=$\emph{Sue goes on a skiing holiday}, $C|A=$\emph{If Sue passes the exam, then she  goes on a skiing holiday}. 
We recall that,  by Corollary  \ref{COR:AgCgADynamic}, when $P(C|A)=0$  it holds that  $A|(C|A)=A|AC=1$ and hence 
\[
\prev[A|(C|A)]=P(A|AC)=1\geq P(A),
\] with  $\prev[A|(C|A)]>P(A)$ when $P(A)<1$. On the contrary,  when $P(C|A)>0$, by Theorem \ref{THM:pA=pAgCgA} it follows that  $\prev[A|(C|A)]=P(A)$, i.e.,  the 
degree of belief in \emph{Sue passes the exam, given that  if Sue passes the exam, then she  goes on a skiing holiday} coincides with the probability that \emph{Sue passes the exam}.
The equality $\prev[A|(C|A)] = P(A)$, in the Sue example,   may seem counterintuitive (see the criticism in \cite{douven11b}); but, in our opinion this happens because some latent information is not considered.
In what follows we will examine further iterated conditionals, where the antecedent $(C|A)$ is replaced by a suitable conjunction containing the additional information that was latent in the context of the Sue example. The new antecedent, with the extra information, explains the intuition that the degree of belief in $A$ (given the new antecedent) should increase.
A first relevant case is obtained if we replace the antecedent $C|A$ by $(C|A)\wedge C$, that is if we consider the iterated conditional \emph{(Sue passes the exam), given that  (if Sue passes the exam, then she  goes on a skiing holiday) and (she  goes on a skiing holiday)}. In other words we consider the generalized  iterated conditional $A|( (C|A)\wedge C)$, which is examined in  the next section.
\section{The generalized iterated conditional $A|((C|A)\wedge C)$}
\label{SEC:AgCgAandC}
In this section we consider the generalized  iterated conditional $A|((C|A)\wedge C)$. The next result  shows that  $\prev[A|((C|A)\wedge C)]\geq P(A)$. 
\begin{theorem}\label{THM:AgCgAandC>A}
Given two events $A$ and  $C$, with $AC\neq \emptyset$, it holds that $\prev[A|((C|A)\wedge C)]\geq P(A)$.
\end{theorem}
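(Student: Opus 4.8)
The plan is to unfold the object through Definition~\ref{DEF:GENITER} and then exploit the compound prevision theorem~\eqref{EQ:COMPOUNDCN}. Writing $\mu=\prev[A|((C|A)\wedge C)]$, $x=P(C|A)$ and $y=P(A)$, I would apply \eqref{EQ:COMPOUNDCN} with $n=2$, taking $\C_{12}=(C|A)\wedge C$, $\C_{123}=(C|A)\wedge C\wedge A$ and $E_{3}|H_{3}=A$, to get
\[
\prev[(C|A)\wedge C\wedge A]=\mu\,\prev[(C|A)\wedge C].
\]
The whole argument then reduces to evaluating the two previsions appearing here and solving for $\mu$.

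Next I would compute the two conjunctions explicitly. Using associativity and commutativity of conjunction together with \eqref{EQ:AandCgAisAC}, the triple conjunction collapses to an ordinary event: $(C|A)\wedge C\wedge A=(A\wedge(C|A))\wedge C=AC\wedge C=AC$, whence $\prev[(C|A)\wedge C\wedge A]=P(AC)=xy$. For the antecedent, since $C=C|\Omega$ is never void (its conditioning event is $\Omega$), the conjunction is conditioned on $A\vee\Omega=\Omega$ and, by Definition~\ref{CONJUNCTION}, equals the bounded random quantity $AC+x\no{A}C$; by linearity of prevision, $\prev[(C|A)\wedge C]=P(AC)+xP(\no{A}C)$.

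Finally I would combine these facts. Assuming $x>0$ and $y=P(A)>0$, the denominator $P(AC)+xP(\no{A}C)=x\bigl(y+P(\no{A}C)\bigr)$ is strictly positive, so
\[
\mu=\frac{P(AC)}{P(AC)+xP(\no{A}C)}=\frac{y}{y+P(\no{A}C)}.
\]
The desired inequality $\mu\geq y=P(A)$ is then equivalent to $y+P(\no{A}C)\leq 1$, which holds because $P(\no{A}C)\leq P(\no{A})=1-y$. The degenerate cases are immediate: when $x=0$ we have $(C|A)\wedge C=AC$, so the object is $A|AC$ with $\prev=P(A|AC)=1\geq P(A)$ (cf.\ Corollary~\ref{COR:AgCgADynamic}); and when $P(A)=0$ the claim is trivial since coherence forces $\mu\in[0,1]$. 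I expect the main obstacle to be the careful bookkeeping of the two conjunctions—in particular recognizing that the triple conjunction reduces to the event $AC$ and that $(C|A)\wedge C$ is never void—together with isolating the degenerate cases; once these are settled the inequality itself is just the elementary bound $P(\no{A}C)\leq 1-P(A)$.
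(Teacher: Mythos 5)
Your proposal is correct and follows essentially the same route as the paper's proof: both reduce the claim to the identity $\mu\,\prev[(C|A)\wedge C]=P(AC)$, compute $(C|A)\wedge C=AC+x\no{A}C$ so that $\prev[(C|A)\wedge C]=P(AC)+xP(\no{A}C)=xP(A\vee C)$, and conclude $\mu=P(A)/P(A\vee C)\geq P(A)$, with the degenerate cases ($x=0$, resp.\ $P(A)=0$ or $P(A\vee C)=0$) treated separately. The only cosmetic differences are that the paper identifies the resulting value as $P(A|(A\vee C))$ while you bound $y+P(\no{A}C)\leq 1$ directly, and that you invoke the compound prevision theorem where the paper uses linearity of prevision on the expanded form.
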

\begin{proof}
We set $x=P(C|A)$ and $\mu=\prev[A|((C|A)\wedge C)]$. 
As $A\wedge (C|A )\wedge C=AC$,
 from Definition \ref{DEF:GENITER}
it holds that
\begin{equation}\label{EQ:AgCgAandC}
\begin{array}{ll}
A|((C|A)\wedge C)= A\wedge (C|A )\wedge C+\mu(1-(C|A)\wedge C)= \\=
AC+\mu(1-(C|A)\wedge C)
=\left\{\begin{array}{ll}
1, &\mbox{ if }  AC \mbox{ is true,}\\
\mu(1-x), &\mbox{ if }  \no{A}C\mbox{ is true,}\\
\mu, &\mbox{ if }  \no{C} \mbox{ is true.}\\
\end{array}
\right.
\end{array}
\end{equation}	
Then, by the linearity of prevision, it follows that $\mu\prev[((C|A)\wedge C)]=P(AC)$.
We  observe that 
\[
(C|A) \wedge C= \left\{
\begin{array}{ll}
1, &\mbox{ if }  AC \mbox{ is true,}\\
x, &\mbox{ if }  \no{A}C\mbox{ is true,}\\
0, &\mbox{ if }  \no{C} \mbox{ is true,}\\
\end{array}
\right.
\]
that is, $(C|A) \wedge C=AC+x\no{A}C$. Then,
\[
\prev[(C|A) \wedge C]=P(AC)+P(C|A)P(\no{A}C)=P(C|A)(P(A)+P(\no{A}C))=P(C|A)P(A\vee C), 
\]
with 
\[
\prev[(C|A) \wedge C]>0 \Longleftrightarrow P(C|A)>0 \mbox{ and } P(A\vee C)>0.
\]
If $\prev[(C|A)\wedge C]>0$, then
\begin{equation}\label{EQ:AgCgAandC>A}
	\mu=\prev[A|((C|A)\wedge C)]=\frac{P(AC)}{\prev[(C|A)\wedge C]}=\frac{P(AC)}{P(C|A)P(A\vee C)}=\frac{P(A)}{P(A\vee C)}=P(A|(A\vee C))
\geq P(A),
\end{equation}
because $P(A\vee C)\leq 1$. Equivalently, the inequality in (\ref{EQ:AgCgAandC>A})  also follows because $A\subseteq A|(A\vee C)$.\\
If $\prev[(C|A) \wedge C]=0$ we distinguish two cases: $(i)$ $P(C|A)=0$; $(ii) P(A\vee C)=0$.\\
In case $(i)$, from (\ref{EQ:AgCgAandC})
we obtain 
\begin{equation}
\small
A|((C|A)\wedge C)=	\left\{\begin{array}{ll}
1, &\mbox{ if }  AC \mbox{ is true,}\\
\mu, &\mbox{ if }    \no{A}\vee\no{C}  \mbox{ is true.}
\end{array}
\right.
\end{equation}
Then, by coherence, $\mu=1$ and $A|((C|A)\wedge C)=A|AC=1$. Thus $\mu=1\geq P(A)$.	
\\
In case $(ii)$  it holds that $P(A)=0$ and hence $\mu\geq P(A)$.
\\
In conclusion, $\prev[A|((C|A)\wedge C)]\geq P(A)$.
\end{proof}
\begin{remark}
We observe that usually the inequality 
$\prev[A|((C|A)\wedge C)]\geq  P(A)$ is strict,  with the equality   satisfied only in extreme cases. Indeed, 
as shown in the proof of Theorem \ref{THM:AgCgAandC>A}, when $\prev[(C|A)\wedge C]>0$, we obtain that  $\prev[A|((C|A)\wedge C)]=P(A)$ only if $P(A\vee C)=1$, or $P(A)=0$.
  When $\prev[(C|A)\wedge C]=0$, we obtain that  $\prev[A|((C|A)\wedge C)]=P(A)$ only if $P(C|A)=0$ and $P(A)=1$, or $P(A\vee C)=0$ and $\prev[A|((C|A)\wedge C)]=0$. 
\end{remark}
Theorem \ref{THM:AgCgAandC>A} can be used to explicate the intuition in the  example of Sue  (\cite{douven11b}), discussed  in Section \ref{SEC:sue}. The original intuition was that learning the conditional {\em if $A$ then $C$} should increase your degree of belief in $A$. However, in this example, 
we  learn
 {\em if $A$ then $C$} and we have the latent information $C$; then,
 we believe both the conditional {\em if $A$ then $C$} and the event $C$.
Theorem \ref{THM:AgCgAandC>A} shows that having these two beliefs can increase our belief in $A$. In formal terms, if we replace  the antecedent $C|A$ by $(C|A)\wedge C$, we reach the conclusion that the degree of belief in $A|((C|A)\wedge C)$ is greater than or equal to $P(A)$. Such reasoning can be seen as a form of an abductive inference (see, e.g., \cite{sep-abduction}) and it is also an instance of  \emph{Affirmation of the Consequent} (AC): from {\em if $A$ then $C$} and $C$ infer $A$. This argument form is not logically valid. In probability logic, however, it is probabilistically informative and not p-valid (see Section \ref{SECT:AC}).

The next result is similar to Theorem \ref{THM:AgCgAandC>A}, with $P(A)$ replaced by $P(A|C)$.
\begin{theorem}\label{THM:AgCgAandC>AgC}
	Given two events $A$ and $C$, with $AC\neq \emptyset$, it holds that $\prev[A|((C|A)\wedge C)]\geq P(A|C)$.
\end{theorem}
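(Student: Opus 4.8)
The plan is to follow the template of the proof of Theorem~\ref{THM:AgCgAandC>A} and to replace only the final inequality step. Setting $x=P(C|A)$ and $\mu=\prev[A|((C|A)\wedge C)]$, the identity $A\wedge (C|A)\wedge C=AC$ together with Definition~\ref{DEF:GENITER} again yields the representation~(\ref{EQ:AgCgAandC}), the product relation $\mu\,\prev[(C|A)\wedge C]=P(AC)$, and $\prev[(C|A)\wedge C]=P(C|A)P(A\vee C)$. So the argument splits on whether $\prev[(C|A)\wedge C]$ is positive.

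In the main case $\prev[(C|A)\wedge C]>0$ we obtain, exactly as before, $\mu=\frac{P(AC)}{P(C|A)P(A\vee C)}=\frac{P(A)}{P(A\vee C)}=P(A|(A\vee C))$. The only new point is then to show $P(A|(A\vee C))\geq P(A|C)$. I would derive this from the Goodman--Nguyen inclusion $A|C\subseteq A|(A\vee C)$: indeed $AC\subseteq A=A(A\vee C)$ and $\no{A}(A\vee C)=\no{A}C\subseteq \no{A}C$, so both defining conditions of Section~\ref{sect:2.1} hold, and by coherence (see Remark~\ref{REM:AgH}) the inclusion forces $P(A|C)\leq P(A|(A\vee C))=\mu$. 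Equivalently, writing $a=P(AC)$, $b=P(A\no{C})$, $c=P(\no{A}C)$, the inequality $\frac{a+b}{a+b+c}\geq\frac{a}{a+c}$ reduces after cross-multiplication to $bc\geq 0$, which is immediate.

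It remains to treat the degenerate case $\prev[(C|A)\wedge C]=0$, i.e.\ $P(C|A)=0$ or $P(A\vee C)=0$. If $P(C|A)=0$ then, as in Theorem~\ref{THM:AgCgAandC>A}, $A|((C|A)\wedge C)=A|AC=1$, whence $\mu=1\geq P(A|C)$. The harder sub-case, which I expect to be the main obstacle, is $P(A\vee C)=0$ (with $P(C|A)=x>0$): here $P(A)=P(C)=0$, the relation $\mu\cdot 0=0$ leaves $\mu$ undetermined, and $P(A|C)$ sits on the zero-probability layer, so it must be controlled by a finer coherence analysis rather than by a ratio of probabilities.

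My plan for this sub-case is to read off the conditioning event directly from~(\ref{EQ:AgCgAandC}): when $x>0$ the bet on $A|((C|A)\wedge C)$ is called off exactly on $\no{C}$, so the object conditions on $C$, and coherence yields the self-referential identity $\mu=P(A|C)+(1-P(A|C))\,\mu(1-x)$. Solving gives $\mu=\frac{P(A|C)}{P(A|C)+x(1-P(A|C))}$, and since $P(A|C)+x(1-P(A|C))\leq P(A|C)+(1-P(A|C))=1$ we again obtain $\mu\geq P(A|C)$. One checks that this formula agrees with $P(A|(A\vee C))$ whenever $P(A\vee C)>0$, so in fact the same computation covers the main case as well and offers a unified route that only distinguishes $x=0$ from $x>0$. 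Combining the cases then gives $\prev[A|((C|A)\wedge C)]\geq P(A|C)$ in all situations.
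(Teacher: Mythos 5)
Your proposal is correct, and cases $P(C|A)P(A\vee C)>0$ and $P(C|A)=0$ coincide with the paper's treatment: the paper likewise reduces the main case to $\mu=P(A|(A\vee C))\geq P(A|C)$ via the Goodman--Nguyen inclusion $A|C\subseteq A|(A\vee C)$, and handles $P(C|A)=0$ by $A|((C|A)\wedge C)=A|AC=1$. Where you genuinely diverge is the residual case $x=P(C|A)>0$ with $P(A\vee C)=0$. The paper disposes of it by a pointwise comparison: from (\ref{EQ:AgCgAandC}), on $C$ the quantity $A|((C|A)\wedge C)$ takes the value $1$ on $AC$ and $\mu(1-x)\geq 0$ on $\no{A}C$, so $A|C\leq A|((C|A)\wedge C)$ when $C$ is true, and Remark~\ref{REM:INEQ-CRQ} immediately gives $P(A|C)\leq\mu$ --- two lines, no algebra. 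You instead identify $\no{C}$ as the set where the bet is called off, invoke Theorem~\ref{THM:EQ-CRQ} to rewrite the object as $[AC+\mu(1-x)\no{A}C]|C$, and solve the resulting fixed-point equation $\mu=P(A|C)+\mu(1-x)(1-P(A|C))$ to get the closed form $\mu=\frac{P(A|C)}{P(A|C)+x(1-P(A|C))}$. This is legitimate (it is exactly the technique the paper itself uses in Theorem~\ref{THM:DgCgADynamic} and Theorem~\ref{THM:AgCgAdiff}, and linearity of prevision given $C$ is available even when $P(C)=0$), and it buys something the paper's argument does not: an explicit formula for $\prev[A|((C|A)\wedge C)]$ as a function of $P(A|C)$ and $x$, valid uniformly for all $x>0$ and consistent with $P(A|(A\vee C))$ when $P(A\vee C)>0$ and with the paper's closing remark that $\mu=P(A|C)$ when $x=1$. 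The trade-off is that you need the extra step of justifying the conditioning event before you can write the self-referential identity, whereas the paper's domination argument requires no such identification.
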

\begin{proof}
We  distinguish three cases: $(i)$ $P(C|A)P(A\vee C)>0$; $(ii)$ $P(C|A)=0$; $(iii)$ $P(C|A)>0$ and  $P(A\vee C)=0$.
\\
In case $(i)$, as shown in the proof of Theorem \ref{THM:AgCgAandC>A}, it holds that $\prev[A|((C|A)\wedge C)]=P(A|(A\vee C))\geq P(A|C)$, because $A|C \subseteq A|(A\vee C)$.\\
In case $(ii)$, 
as shown in the proof of Theorem \ref{THM:AgCgAandC>A}, it holds that $A|((C|A)\wedge C)=A|AC=1$ and hence  $\prev[A|((C|A)\wedge C)]=1\geq  P(A|C)$.\\
In case $(iii)$, as we obtain:
\[
A|C= \left\{
\begin{array}{ll}
1, &\mbox{ if }  AC \mbox{ is true,}\\
0, &\mbox{ if }  \no{A}C\mbox{ is true,}\\
\eta, &\mbox{ if }  \no{C} \mbox{ is true,}\\
\end{array}
\right. \mbox{ and }
\begin{array}{ll}
	A|((C|A)\wedge C)
	=\left\{\begin{array}{ll}
		1, &\mbox{ if }  AC \mbox{ is true,}\\
		\mu(1-x), &\mbox{ if }  \no{A}C\mbox{ is true,}\\
		\mu, &\mbox{ if }  \no{C} \mbox{ is true,}\\
	\end{array}
	\right.
\end{array}
\]
where $x=P(C|A)>0$.  Then, $A|C\leq  	A|((C|A)\wedge C)$ when $C$ is true and by (Theorem \ref{THM:EQ-CRQ} and) Remark \ref{REM:INEQ-CRQ}  it follows that $ \prev[A|((C|A)\wedge C)]\geq P(A|C) $.
\end{proof}
As shown by Theorem \ref{THM:AgCgAandC>AgC}, in the Sue example the probability of the conditional $A|C$, that is \emph{(Sue passes the exam), given that  (Sue  goes on a skiing holiday)}, can increase if we replace the antecedent $C$ by $(C|A)\wedge C$, that is if we replace 
\emph{(Sue  goes on a skiing holiday)} by 
\emph{(if Sue passes the exam, then she  goes on a skiing holiday) and 
 (Sue  goes on a skiing holiday)}.

\begin{remark}
	We observe that, in the particular case where  $P(C|A)=1$, it holds that
	\[
	\begin{array}{ll}
	A|((C|A)\wedge C)
	=
	\left\{\begin{array}{ll}
	1, &\mbox{ if }  AC \mbox{ is true,}\\
	0, &\mbox{ if }  \no{A}C\mbox{ is true,}\\
	\mu, &\mbox{ if }  \no{C} \mbox{ is true.}\\
	\end{array}
	\right.
	\end{array}
	\]
	Then, $A|((C|A)\wedge C)$ coincides with $A|C$, when $C$ is true and, by Theorem \ref{THM:EQ-CRQ}, it follows that  $\prev(A|((C|A)\wedge C))=P(A|C)$ and $A|((C|A)\wedge C)=A|C$. 
\end{remark}
\section{Lower and Upper Bounds on Affirmation of the Consequent}
\label{SECT:AC}
In the previous section we considered an instance of AC in Theorem \ref{THM:AgCgAandC>A} and the Sue example. In this section we will give a general probabilistic analysis of AC as an inference rule, where  the premise set is $\{C,C|A\}$ and the conclusion is $A$. 
We recall that a family of conditional events
 $\mathcal{F} = \{E_i|H_i \, , \; i=1,\ldots,n\}$  is  {\em p-consistent} if and only if
the assessment $(1,1,\ldots,1)$ on $\mathcal{F}$ is coherent. In addition,
a p-consistent family  $\mathcal{F}$ {\em p-entails} a conditional event $E|H$  if and only if the unique coherent extension on $E|H$ of the assessment $(1,1,\ldots,1)$ on $\mathcal{F}$ is  $P(E|H)=1$ (see, e.g., \cite{gilio13ijar}). 
We say that the inference from $\F$ to $E|H$ is \emph{p-valid} if and only if  $\mathcal{F}$  p-entails $E|H$. The characterization of the p-entailment using the notions of conjunction and  generalized iterated conditional has been given in \cite{GiPS18wp,GiSa19}.
We will show that the inference AC is not p-valid, that is  from $P(C)=P(C|A)=1$ it does not follow that $P(A)=1$.
 Notice that this inference rule has been also examined in \cite{pfeifer09b}. Here we examine the inference rule  without assuming that $0<P(C|A)<1$.
\begin{theorem} \label{THM:AC}
	Let  two logically independent events $A$ and $C$ and any coherent  probability assessment $(x,y)$ on $\{C,C|A\}$ be given. The extension $z=P(A)$ is coherent if and only if $z'\leq z\leq z''$, where
	\begin{equation}\label{EQ:AC}
	\begin{array}{ll}
	z'=0, \;\;&
	z''=
	\begin{cases}
	1, &\mbox{if } x=y,\\
	\frac{x}{y}, &\mbox{if } x<  y,\\
	\frac{1-x}{1-y}, &\mbox{if }    x>y.\\
	\end{cases}
	\end{array}
	\end{equation}
\end{theorem}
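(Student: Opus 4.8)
The plan is to reduce the coherence question to a linear feasibility problem over the four constituents generated by $A$ and $C$, and then to read off the extremal value of $z=P(A)$. Since $A$ and $C$ are logically independent, the constituents $AC$, $A\no{C}$, $\no{A}C$, $\no{A}\,\no{C}$ are all possible, and I would denote their probabilities by $p_1,p_2,p_3,p_4\geq 0$ with $p_1+p_2+p_3+p_4=1$. The assessment then translates into the linear constraints $P(C)=p_1+p_3=x$ and $P(A)=p_1+p_2=z$, together with the constraint coming from $C|A$, which in the system $(\Sigma)$ of Theorem~\ref{CNES-PREV-I_0-INT} reads $p_1=y(p_1+p_2)=yz$. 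Solving this system gives $p_1=yz$, $p_2=(1-y)z$, $p_3=x-yz$, and $p_4=1-x-(1-y)z$.

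For $z>0$ I would argue that coherence is equivalent to the mere solvability of $(\Sigma)$. Indeed, the conditioning event $A$ satisfies $p_1+p_2=z>0$ in every solution (the value $z$ is fixed by the constraint $P(A)=z$), so the index of $C|A$ does not belong to the set $I_0$ of Theorem~\ref{CNES-PREV-I_0-INT}; since the indices of $C$ and $A$ are conditioned on $\Omega$ and hence also lie outside $I_0$, we get $I_0=\emptyset$ and condition (ii) of that theorem is vacuous. Imposing $p_3\geq 0$ and $p_4\geq 0$ on the solution above yields $z\leq x/y$ (when $y>0$) and $z\leq (1-x)/(1-y)$ (when $y<1$), so the coherent positive values of $z$ are exactly those with $0<z\leq \min\{x/y,(1-x)/(1-y)\}$; the boundary cases $y=0$ and $y=1$ are checked directly and give $z''=1-x$ and $z''=x$ respectively, in agreement with the stated formula. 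A short computation comparing the two candidate bounds shows that $x(1-y)\leq y(1-x)$ is equivalent to $x\leq y$, so the minimum equals $x/y$ when $x<y$, equals $(1-x)/(1-y)$ when $x>y$, and equals $1$ when $x=y$; this is precisely the piecewise expression for $z''$ in (\ref{EQ:AC}).

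The main delicate point is the lower bound $z'=0$, which is exactly where the coherence framework, rather than a naive ratio definition of conditional probability, does the work. Here I cannot use $p_1=yz$ to pin down $P(C|A)$, because $P(A)=0$ forces $p_1=p_2=0$ and the conditioning event of $C|A$ has zero probability. Instead I would invoke Theorem~\ref{CNES-PREV-I_0-INT} directly: the unique solution of $(\Sigma)$ is $(p_1,p_2,p_3,p_4)=(0,0,x,1-x)$, so the index of $C|A$ lies in $I_0$, and the required sub-assessment $\M_0$ is just $P(C|A)=y$ on the single conditional event $C|A$; by the characterization recalled in Section~\ref{sect:2.1} (since $\emptyset\neq AC\neq A$ by logical independence) this is coherent for every $y\in[0,1]$. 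Hence $z=0$ is coherent irrespective of $(x,y)$. Combining this with the analysis for $z>0$, and noting that each admissible value is realized by an explicit nonnegative distribution, shows that the set of coherent extensions is the whole interval $[0,z'']$, which is the assertion of the theorem.
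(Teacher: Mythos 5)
Your proof is correct and follows essentially the same route as the paper's: both reduce coherence of $(x,y,z)$ on $\{C,C|A,A\}$ to the solvability of the same linear system over the four constituents, obtain the identical solution $(yz,\,z-yz,\,x-yz,\,1-x-z+yz)$, and derive $z'=0$ and the piecewise $z''$ from the nonnegativity conditions with the same three-case comparison of $x$ and $y$. The only difference is cosmetic: you justify the equivalence of solvability and coherence by computing $I_0$ explicitly (including the $z=0$ case) via Theorem~\ref{CNES-PREV-I_0-INT}, whereas the paper reaches the same conclusion by observing that every two-element sub-assessment is coherent for all values in $[0,1]^2$.
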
	
\begin{proof}
	The constituents are $AC,A\no{C},\no{A}C,\no{A}\no{C}$; the associated points for the family $\{C,C|A,A\}$ are $Q_1=(1,1,1)$, $Q_2=(0,0,1)$, $Q_3=(1,y,0)$, $Q_4=(0,y,0)$. 
	We denote by $\mathcal{I}$  the convex hull of $Q_1,\ldots, Q_4$.	
	We observe that:  the assessment  $(x,y)$  on $\{C,C|A\}$ is coherent, for every $(x,y)\in[0,1]^2$;
	the assessment  $(x,z)$  on $\{C,A\}$ is coherent, for every $(x,z)\in[0,1]^2$; the assessment  $(y,z)$  on $\{C|A,A\}$ is coherent, for every $(y,z)\in[0,1]^2$. 
	Then,  the coherence of $(x,y,z)$ is equivalent to  the  condition $(x,y,z)\in \mathcal{I}$, that is to the
	solvability of the system
	\begin{equation}\label{EQ:SIGMA}
	\begin{array}{lllll}
	\left\{
	\begin{array}{lllllll}
	\lambda_1+\lambda_3=x, \\
	\lambda_1+y\lambda_3+y\lambda_4=y, \\
	\lambda_1+\lambda_2=z, \\
	\lambda_1+\lambda_2+\lambda_3+\lambda_4=1, \;\\
	\lambda_i\geq 0,\; i=1,\ldots,4\,,
	\end{array}
	\right. \mbox{ \;\; that is\;\; } 
	\left\{
	\begin{array}{lllllll}
	\lambda_1=yz,\\
	\lambda_2=z-yz,\\
	\lambda_3=x-yz,\\
	\lambda_4=1-x-z+yz, \;\\
	\lambda_i\geq 0,\; i=1,\ldots,4\,.
	\end{array}
	\right.
	\end{array}
	\end{equation}
	We observe that  the system $(\ref{EQ:SIGMA})$ with $z=0$ is solvable for every $(x,y)\in[0,1]^2$. Then, $z'=0$ for every $(x,y)\in[0,1]^2$. 
	Concerning the upper bound $z''$ we distinguish three cases: $(i)$ $x=y$;  $(ii)$ $x<y$;  $(iii)$ $x>y$. \\
	Case $(i)$. Given any $(x,y)$, with $x=y\in[0,1]$, the system $(\ref{EQ:SIGMA})$ is solvable with $z=1$. Then $z''=1$, when $x=y$.\\
	Case $(ii)$. Given any $(x,y)\in[0,1]^2$, with $x<y$, the system $(\ref{EQ:SIGMA})$ is solvable if and only if  $0\leq z\leq \frac{x}{y}$. Then $z''=\frac{x}{y}$, when $x<y$.\\
	Case $(iii)$. Given any $(x,y)\in[0,1]^2$, with $x>y$, the system $(\ref{EQ:SIGMA})$ is solvable if and only if  $0\leq z\leq \frac{1-x}{1-y}$. Then $z''=\frac{1-x}{1-y}$, when $x>y$. 
\end{proof}
As shown by Theorem~\ref{THM:AC}, the inference AC is not p-valid. Indeed, the assessment $(1,1,z)$ on $\{C,C|A,A\}$ is coherent for every $z\in[0,1]$. 
In \cite{GiPS18wp} it has been shown that
\[
\begin{array}{lll}
P(C)=1,\; P(C|A)=1,\; P(C|\no{A})<1\; \Longrightarrow\; P(A)=1.
\end{array}
\]
Thus, under the probabilistic constraint $P(C|\no{A})<1$, we obtain a (weak) p-valid  AC rule. 
\section{Further generalized iterated conditionals}
\label{SEC:FURTHERITER}
In this section we will examine some further cases, in the Sue example, where the degree of belief in $A$ increases if we consider suitable antecedents in the   generalized iterated conditionals.
\subsection{Analysis of  $A|((C|A) \wedge  K)$}
\label{SUBSEC:AgCgAandK}
We consider the case where the latent information is represented by the event  $K=$ \emph{Sue increases her study time}. 
As it seems reasonable, we assume that the following inequalities hold: 
\begin{equation}\label{EQ:ASSUMPTIONK}
(i)\; P(A|K)\geq  P(A);\;\; (ii)\;P(C|AK)\geq P(C|A);\;\; (iii)\;\prev[(C|A)\wedge K]>0.
\end{equation}
 Then, in order to take into account the  latent information $K$, we study  the generalized iterated conditional
$A|((C|A) \wedge  K))$.  
From Definition \ref{CONJUNCTION} it holds that 
$A\wedge (C|A)=AC$ and hence 
\begin{equation}\label{EQ:ACK}
A\wedge (C|A)\wedge K=ACK.
\end{equation}
We observe that $(C|A)\wedge K \wedge A$ is equal to $(C|A)\wedge K $, or $0$, according to whether $A$ is true, or false. Likewise, $(C|A)\wedge K \wedge \no{A}$ is equal to $(C|A)\wedge K $, or $0$, according to whether $\no{A}$ is true, or false. Therefore
$
(C|A)\wedge K=(C|A)\wedge K\wedge A+(C|A)\wedge K\wedge \no{A} 
$
and hence, from (\ref{EQ:ACK})
\begin{equation}\label{EQ:CgAandKDEC}
(C|A)\wedge K=ACK+(C|A)\wedge K\wedge \no{A}. 
\end{equation}
In addition, 
$\prev[(C|A)|\no{A}K]=P(C|A)$ because $A$ and $\no{A}K$ are incompatible (see \cite[formula (24)]{SPOG18}) and  from (\ref{EQ:COMPOUNDCN}) it follows that
\begin{equation}\label{EQ:CgAandKandnoA}
\prev[(C|A)\wedge  K\wedge \no{A}]=\prev[(C|A)|\no{A}K]P(\no{A}K)=P(C|A)P(\no{A}K).
\end{equation}
We observe that $\prev[(C|A)\wedge K]>0$ implies   $P(K)>0$ and $P(C|A)>0$.
Then, from (\ref{EQ:COMPOUNDCN}), (\ref{EQ:ACK}),
(\ref{EQ:CgAandKDEC}), and  (\ref{EQ:CgAandKandnoA}), we obtain
\begin{equation}
\begin{array}{ll}
\prev[A|((C|A) \wedge K)] = 
\frac{\prev[A\wedge (C|A) \wedge K]}{\prev[(C|A) \wedge K]}
=\frac{P(ACK)}{P(ACK)+P[(C|A)\wedge K \wedge \no{A}]}
=\frac{P(ACK)}{P(ACK)+P(C|A)P(\no{A}K)}=\\ =
\frac{P(AC|K)}{P(AC|K)+P(C|A)P(\no{A}|K)}
=P(A|K)\frac{P(C|AK)}{P(AC|K)+P(C|A)P(\no{A}|K)}
=P(A|K)\frac{P(C|AK)}{P(C|AK)P(A|K)+P(C|A)P(\no{A}|K)}.
\end{array}
\end{equation}
As $P(C|AK)\geq  P(C|A)>0$, it follows
\[
P(C|AK)=P(C|AK)P(A|K)+P(C|AK)P(\no{A}|K)\geq  P(C|AK)P(A|K)+P(C|A)P(\no{A}|K)>0,
\]
and hence
\[
\frac{P(C|AK)}{P(C|AK)P(A|K)+P(C|A)P(\no{A}|K)}\geq 1.
\]
Therefore, as $P(A|K)\geq  P(A)$, it holds that 
\begin{equation}
\begin{array}{ll}
\prev[A|((C|A) \wedge K)] =  P(A|K)\frac{P(C|AK)}{P(C|AK)P(A|K)+P(C|A)P(\no{A}|K)}\geq P(A|K)\geq P(A).
\end{array}
\end{equation}
In summary, if  the latent information is represented by the event $K$ and  
we assume the inequalities in  (\ref{EQ:ASSUMPTIONK}), then the degree of belief in $A$ increases, that is  $\prev[A|((C|A) \wedge K)] \geq P(A)$.
\begin{remark}
If we consider  the event $H$=\emph{Harry sees Sue buying some skiing equipment}, then (likewise (\ref{EQ:ASSUMPTIONK})) 
it is reasonable  to evaluate $P(A|H)\geq P(A)$,
$P(C|AH)\geq P(C|A)$, and 
$\prev[(C|A)\wedge H]>0$.
Then, by the  same reasoning, we reach the conclusion that 
$\prev[A|((C|A) \wedge H)] \geq P(A)$.
\end{remark}

\subsection{Analysis of the new object $A|[(C|A)\wedge (K|(C|A))]$}
It could seem  that the reasoning in  Section \ref{SUBSEC:AgCgAandK} should be developed by replacing  in $A|[(C|A)\wedge K]$ the event $K$ by the iterated conditional $K|(C|A)$, which formalizes the sentence \emph{if  Sue goes to holiday when she passes the exam, then she increases her study time}. 
If we replace $K$ by $K|(C|A)$, the iterated conditional  $A|[(C|A)\wedge K]$ becomes the new object $A|[(C|A)\wedge (K|(C|A))]$ which we need to examine.
We start our analysis by defining the conjunction  between a conditional event and  an iterated conditional.
\begin{definition}\label{DEF:CgF&BgKgAgH} Let  a conditional event $C|F$ and an iterated conditional $(B|K)|(A|H)$ be given, with $\prev[(B|K)|(A|H)]=\nu$. We define
\begin{equation}\label{EQ:CgF&BgKgAgH}
 (C|F)\wedge[(B|K)|(A|H)]=[(B|K)|(A|H)]\wedge (C|F)=
 (A|H)\wedge (B|K)\wedge (C|F)+\nu\,[(\no{A}|H)\wedge (C|F)].
\end{equation}
\end{definition}
We observe that, as  $(B|K)|(A|H)=(A|H)\wedge(B|K)+\nu\, \no{A}|H$, formula (\ref{EQ:CgF&BgKgAgH}) implicitly assumes a suitable distributive property. Indeed
\[
\begin{array}{l}
(C|F)\wedge[(B|K)|(A|H)]=(C|F)\wedge[(A|H)\wedge(B|K)+\nu\, \no{A}|H]=\\
=
 (A|H)\wedge (B|K)\wedge (C|F)+\nu\,[(\no{A}|H)\wedge (C|F)],
\end{array}
\]

and as we can see the equality 
\begin{equation}\label{EQ:DISTRIBUTIVE}
(C|F)\wedge[(A|H)\wedge(B|K)+\nu\, \no{A}|H]=
 (A|H)\wedge (B|K)\wedge (C|F)+\nu\,[(\no{A}|H)\wedge (C|F)]
\end{equation}
represents a kind of  distributive property of the conjunction over the sum. This property has  been already introduced in \cite{GiSa19PG}.
\begin{remark}
Notice that, 
by applying Definition \ref{DEF:CgF&BgKgAgH} with 
 $C|F=A|H$, as $(\no{A}|H)\wedge (A|H)=0$ it follows that 
 \begin{equation}\label{EQ:BgKgAgHgAgH}
[(B|K)|(A|H)]\wedge (A|H)=(A|H)\wedge(B|K), 
\end{equation}
which has the same structure of the equality $(B|A)\wedge A=AB$. Moreover, by recalling (\ref{EQ:PRODUCT}), it holds that  
\begin{equation}
\prev[((B|K)|(A|H))\wedge (A|H)]=\prev[(B|K)|(A|H)]P( A|H).
\end{equation}
\end{remark}
Concerning the antecedent $(C|A)\wedge (K|(C|A))$ of the new object studied in this section, $A|[(C|A)\wedge (K|(C|A))]$,
 as $K|\Omega=K$ from (\ref{EQ:BgKgAgHgAgH}) it follows that
\begin{equation}\label{EQ:(CgA&KgCgA}
(C|A)\wedge [K|(C|A)]=
(C|A)\wedge [(K|\Omega)|(C|A)]=(C|A)\wedge K.
\end{equation}
Thus  
\begin{equation}
A|[(C|A)\wedge (K|(C|A))] = A|[(C|A)\wedge K],
\end{equation}
that is the new object $A|[(C|A)\wedge (K|(C|A))]$ coincides with the generalized iterated conditional $A|[(C|A)\wedge K]$. Finally, for the degree of belief in $A|[(C|A)\wedge (K|(C|A))]$ we reach the same conclusions  given for $A|[(C|A)\wedge K]$ in  Section \ref{SUBSEC:AgCgAandK}.
\\
\subsection{The generalized iterated conditional $A|((C|A)\wedge (A|H))$ }
In this section we consider the generalized  iterated conditional where  in the antecedent we add the  conditional $A|H$. Here,  $H$ is a further event. Of course, when $H=\Omega$ it holds that 
\[
A|((C|A)\wedge (A|H))=A|((C|A)\wedge A)=A|AC=1\geq A.
\]
Hence, $\prev[A|((C|A)\wedge A)]=1\geq P(A)$. We show below that, given any event $H\neq \Omega$, we obtain the weaker result that $\prev[A|((C|A)\wedge (A|H))]=P(A\vee H)\geq P(A)$.
\begin{theorem}\label{THM:CgAandAgH=AvH}
	Let $A,C,H$ be three logically independent events, with $ P(ACH)>0$. Then, $\prev[A|((C|A)\wedge (A|H))]=P(A\vee H)$.
\end{theorem}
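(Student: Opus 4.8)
The plan is to follow the same template as the proof of Theorem~\ref{THM:AgCgAandC>A}: reduce the target prevision $\mu := \prev[A|((C|A)\wedge(A|H))]$ to a ratio of two previsions via the compound prevision theorem, and then evaluate both previsions explicitly on the eight constituents generated by the logically independent events $A,C,H$. First I would apply Definition~\ref{DEF:GENITER}, with the conjunction $(C|A)\wedge(A|H)$ as antecedent and $A=A|\Omega$ as consequent, to write $A|((C|A)\wedge(A|H)) = A\wedge(C|A)\wedge(A|H) + \mu\,(1-(C|A)\wedge(A|H))$. Taking the prevision and invoking the compound prevision theorem~(\ref{EQ:COMPOUNDCN}) gives $\prev[A\wedge(C|A)\wedge(A|H)] = \mu\,\prev[(C|A)\wedge(A|H)]$, so that $\mu = \prev[A\wedge(C|A)\wedge(A|H)]/\prev[(C|A)\wedge(A|H)]$ whenever the denominator is positive. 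The whole statement then reduces to computing these two previsions.

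Next I would compute the denominator. Using associativity/commutativity of conjunction together with Definition~\ref{CONJUNCTION}, I would expand $(C|A)\wedge(A|H)$ on the constituents $A^{\pm}C^{\pm}H^{\pm}$. Setting $y=P(A|H)$ and $z=\prev[(C|A)\wedge(A|H)]$, the conjunction takes the value $1$ on $ACH$, $y$ on $AC\no{H}$, $z$ on $\no{A}\,\no{H}$, and $0$ on the remaining constituents; the ``$P(C|A)$'' case in~(\ref{EQ:CONJUNCTION}) drops out because $\no{A}\cdot AH=\emptyset$. Imposing the self-consistency of the prevision as in~(\ref{EQ:PREVXgH}), i.e.\ reading $z=\prev[(C|A)\wedge(A|H)]$ off this table, yields $z = P(ACH)+y\,P(AC\no{H})+z\,P(\no{A}\,\no{H})$, hence $z\,P(A\vee H)=P(ACH)+P(A|H)\,P(AC\no{H})$.

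Then I would compute the numerator. Since $A\wedge(C|A)=AC$ by~(\ref{EQ:AandCgAisAC}), associativity gives $A\wedge(C|A)\wedge(A|H)=AC\wedge(A|H)$, whose value on the constituents is $1$ on $ACH$, $P(A|H)$ on $AC\no{H}$, and $0$ elsewhere (here $AC|\Omega$ is never void, so no extra prevision value enters). Taking the prevision, $\prev[A\wedge(C|A)\wedge(A|H)] = P(ACH)+P(A|H)\,P(AC\no{H})$, which is exactly the right-hand side obtained for $z\,P(A\vee H)$ in the previous step. Therefore the numerator equals $z\,P(A\vee H)$, and dividing gives $\mu = z\,P(A\vee H)/z = P(A\vee H)$. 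Positivity of $z$, needed to divide, follows from the hypothesis $P(ACH)>0$: since $z\,P(A\vee H)\geq P(ACH)>0$ and $P(A\vee H)\leq 1$, we get $z>0$.

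The hard part will be the bookkeeping in the two middle steps: correctly classifying, on each of the eight constituents, whether $C|A$ and $A|H$ are true, false, or void, and confirming that the spurious ``void'' prevision terms cancel so that the numerator and $z\,P(A\vee H)$ genuinely coincide. The logical independence of $A,C,H$ is what guarantees that all eight constituents are possible, so that no row of the tables silently collapses and the identity $z\,P(A\vee H)=P(ACH)+P(A|H)\,P(AC\no{H})$ holds as stated.
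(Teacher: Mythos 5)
Your proposal is correct and follows essentially the same route as the paper's proof: reduce $\mu$ to the ratio $\prev[AC\wedge(A|H)]/\prev[(C|A)\wedge(A|H)]$ via the compound prevision theorem, evaluate both previsions on the constituents (obtaining $P(ACH)+P(A|H)P(AC\no{H})$ for the numerator and that same quantity divided by $P(A\vee H)$ for the denominator), and conclude $\mu=P(A\vee H)$. The only cosmetic difference is that you obtain the denominator through the self-consistency identity $z=P(ACH)+yP(AC\no{H})+zP(\no{A}\no{H})$ rather than writing $z$ directly as a conditional prevision ratio, which amounts to the same computation.
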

\begin{proof}
	We set $x=P(C|A)$, $y=P(A|H)$, $z=\prev[(C|A)\wedge (A|H)]$, $\mu=\prev[A|((C|A)\wedge (A|H))]$. We observe that 
	\begin{equation}\label{EQ:CgAandAgH}
	\begin{array}{l}
	(C|A)\wedge (A|H)=\left\{\begin{array}{ll}
	1, &\mbox{ if }  ACH \mbox{ is true,}\\
	y, &\mbox{ if }  AC\no{H} \mbox{ is true,}\\
	0, &\mbox{ if }  
	A\no{C}\vee \no{A}H
	\mbox{ is true,}\\
	z, &\mbox{ if }  \no{A} \no{H} \mbox{ is true.}\\
	\end{array}
	\right.
	\end{array}
	\end{equation}
	Thus, as $P(ACH)>0$, it holds that $ACH\neq \emptyset$ and hence
	$(C|A)\wedge (A|H)\neq 0$, so that $A|((C|A)\wedge (A|H))$ makes sense.  We also observe that $P(ACH)>0$
	implies $P(A\vee H)>0$; then 
	\begin{equation}\label{EQ:z>0}
	z=\prev[(C|A)\wedge (A|H)]=\frac{P(ACH)+yP(AC\no{H})}{P(A\vee H)}>0.
	\end{equation}
	Moreover,
	\[
	\begin{array}{l}
	A|((C|A)\wedge (A|H))=AC\wedge (A|H)+\mu(1-(C|A)\wedge (A|H))=\\
	=\left\{\begin{array}{ll}
	1, &\mbox{ if }  ACH \mbox{ is true,}\\
	y+\mu(1-y), &\mbox{ if }  AC\no{H} \mbox{ is true,}\\
	\mu(1-z), &\mbox{ if }  \no{A}\no{H} \mbox{ is true,}\\
	\mu, &\mbox{ if }  A\no{C} \vee \no{A}H \mbox{ is true.}\\
	\end{array}
	\right.\\
	\end{array}
	\]
	From  (\ref{EQ:COMPOUNDCN}) and (\ref{EQ:z>0})  it holds that 
	\[
	\mu=\prev[A|((C|A)\wedge (A|H))]=\frac{\prev[A\wedge (C|A)\wedge (A|H)]}{\prev[(C|A)\wedge (A|H)]}=\frac{\prev[AC\wedge (A|H)]}{\prev[(C|A)\wedge (A|H)]}.
	\]
	We observe that
	\[
	\begin{array}{l}
	(AC)\wedge (A|H)=\left\{\begin{array}{ll}
	1, &\mbox{ if }  ACH \mbox{ is true,}\\
	y, &\mbox{ if }  AC\no{H} \mbox{ is true,}\\
	0, &\mbox{ if }  \no{A}\vee \no{C} \mbox{ is true;}\\
	\end{array}
	\right.
	\end{array}
	\]
	then 
	\begin{equation}\label{EQ:ACandAgH}
	\prev[AC\wedge (A|H)]=P(ACH)+yP(AC\no{H}).
	\end{equation}
	Notice that (\ref{EQ:ACandAgH}) also follows by applying the distributive property; indeed \[
	AC\wedge(A|H)=AC\wedge(AH+y\no{H})=ACH+yAC\no{H},
	\]
	and hence (\ref{EQ:ACandAgH}) follows by the linearity of prevision.
	Finally, 
	\[
	\mu=\frac{\prev[AC\wedge (A|H)]}{\prev[(C|A)\wedge (A|H)]}=\frac{P(ACH)+yP(AC\no{H})}{\frac{P(ACH)+yP(AC\no{H})}{P(A\vee H)}}=P(A\vee H).
	\]
\end{proof}
Thus, under the assumption that $P(ACH)>0$, by Theorem \ref{THM:CgAandAgH=AvH}  it holds  that 
\begin{equation}\label{EQ:AgCgAandAgH>A}
\prev[A|((C|A)\wedge (A|H))]=P(A\vee H)\geq P(A).
\end{equation}
\begin{remark}
	If in  $A|((C|A)\wedge (A|H))$ we exchange the antecedent with the consequent, we obtain 
	the generalized iterated conditional 
	\begin{equation}\label{EQ:CgAandAgHgA}
	\begin{array}{l}
	((C|A)\wedge (A|H))|A=AC\wedge (A|H)+\eta(1-A)
	=\left\{\begin{array}{ll}
	1, &\mbox{ if }  ACH \mbox{ is true,}\\
	y, &\mbox{ if }  AC\no{H} \mbox{ is true,}\\
	0, &\mbox{ if }  A\no{C} \mbox{ is true,}\\
	\eta, &\mbox{ if }  \no{A} \mbox{ is true,}\\
	\end{array}
	\right.\\
	\end{array}
	\end{equation}
	where $\eta =\prev[((C|A)\wedge (A|H))|A]$. Notice that:\\ $(a)$ the conjunction $(C|A)\wedge (A|H)$ is a conditional random quantity with conditioning event $A\vee H$, that is $(C|A)\wedge (A|H)=X|(A\vee H)=(ACH+yAC\no{H})|(A\vee H)$ (see formula (\ref{EQ:CgAandAgH}));\\
	$(b)$ as $A\subseteq A\vee H$, it holds that (see Remark \ref{REM:AHK})
	\[
	((C|A)\wedge (A|H))|A=(X|(A\vee H))|A=X|A=(ACH+yAC\no{H})|A=(CH+yC\no{H})|A;
	\]
that is 
	the generalized iterated conditional $((C|A)\wedge (A|H))|A$ is a conditional random quantity with the conditioning event $A$;
	\\
	$(c)$ from (\ref{EQ:CgAandAgH}) and (\ref{EQ:CgAandAgHgA}) it holds that  $((C|A)\wedge (A|H))|A\geq (C|A)\wedge (A|H)$, when $A\vee H$ is true. \\
	Thus, by Remark \ref{REM:INEQ-CRQ} (and Theorem \ref{THM:EQ-CRQ}), it follows that 
	\begin{equation}\label{EQ:STRMOT}
	(i)\, \prev[((C|A)\wedge (A|H))|A]\geq \prev[(C|A)\wedge (A|H)];\;\;  (ii)\, ((C|A)\wedge (A|H))|A\geq (C|A)\wedge (A|H).
	\end{equation}
	Then, (under the assumption that ${P(ACH)>0}$) the inequality $\prev[A|((C|A)\wedge (A|H))]\geq P(A)$ in (\ref{EQ:AgCgAandAgH>A})    also follows by the following reasoning:
\begin{equation}{\label{EQ:INFIN}}
	\begin{array}{ll}
	\prev[A|((C|A)\wedge (A|H))]=\frac{\prev[AC \wedge (A|H)]}{\prev[(C|A) \wedge (A|H)]}=\frac{\prev[A\wedge (C|A) \wedge (A|H)]}{\prev[(C|A) \wedge (A|H)]}=P(A)\frac{\prev[(C|A) \wedge (A|H)|A]}{\prev[(C|A) \wedge (A|H)]}\geq P(A).
	\end{array}
\end{equation}
\end{remark}
Notice that the inequality $\prev[A|((C|A)\wedge (A|H))]\geq P(A)$ is valid for every event $H$; for instance, in 
the Sue example it is not necessary  that $H$ is the event \emph{Harry sees Sue buying some skiing equipment}. Indeed, the strong motivation for the inequality $\prev[A|((C|A)\wedge (A|H))]\geq P(A)$ in (\ref{EQ:INFIN}) is given by (\ref{EQ:STRMOT}).
\section{Independence and uncorrelation}
\label{SEC:INDINC}
In this section we give further comments on the notions of independence and uncorrelation.
As a first example, we recall that  $\prev[(C|A)\wedge A]=P(C|A)P(A)$ (see formula (\ref{EQ:PAC})). 
This result  is rejected by some researchers, because it is supposed to be ``deeply problematic''(\cite{douven11b}, see also \cite{edgington95}) for $A$ and $C|A$ to be  ``independent''. But,  we observe that for discussing  independence in the context of iterated conditionals a definition of independence should be  given first. 
We also observe that in our approach conditional events and conjunctions are conditional random quantities. In addition, concerning the notions  of independence and uncorrelation, 
we recall that two random quantities $X$ and $Y$ are \emph{uncorrelated} if  $\prev(XY)=\prev(X)\prev(Y)$.
While, $X,Y$  are \emph{stochastically  independent} when $P(X=x\wedge Y=y)=P(X=x)P(Y=y)$ for every pair $(x,y)$.
As it is well known, if  $X,Y$ are independent, then $X,Y$ are uncorrelated; the converse is in general not valid.
As independence implies uncorrelation, there are three cases: $X,Y$ are uncorrelated and independent; $X,Y$ are uncorrelated but not independent;  $X,Y$ are not uncorrelated and not independent.

A case where $X,Y$ are uncorrelated but not independent is the following one.   If $Y=X^2$ and $X\in\{-1,0,1\}$, with $P(X=1)=P(X=-1)=P(X=0)=1/3$, then $Y\in\{0,1\}$ with $P(Y=1)=2/3$ and $P(Y=0)=1/3$. In this case $\prev(X)=0$, $\prev(Y)=2/3$ and $\prev(X)\prev(Y)=0$. Moreover,  $XY=X^3=X$, then $\prev(XY)=0=\prev(X)\prev(Y)$. However,  $X,Y$ are not independent because, for instance, $P(X=1 \wedge Y=1)=P(X=1)=1/3$, while $P(X=1)P(Y=1)=2/9$, so that $P(X=1 \wedge Y=1)\neq P(X=1)P(Y=1)$.

From this point of view, the  equality  $\prev((C|A)\wedge A)=P(C|A)P(A)$  represents the property of  uncorrelation between the random quantities $C|A$ and $A$. Indeed, 
we observe that $ (C|A)\wedge A= (C|A)\cdot A$ in all cases and hence $\prev[(C|A)\wedge A]= P(C|A)P(A)=\prev[(C|A)\cdot A]$. Therefore, in our framework, we could look at $C|A$ and $A$ as uncorrelated random quantities. However,  $C|A$ and $A$ cannot be seen as two stochastically  independent random quantities because, for instance, by assuming $P(C|A)=P(A)=\frac{1}{2}$ ,   it holds that 
\[
P((A=1)\wedge ((C|A)=1))=P(AC=1)=P(AC)=\frac{1}{4}\neq P(A=1)P((C|A)=1)=P(A)P(AC)=\frac{1}{8}.\]
As a further example, 
let us consider  two conditional events $A|H$ and $B|K$, with $HK=\emptyset$. 
We recall that (see \cite[Section 5]{GiSa13a})
\[
\prev[(A|H)\wedge (B|K)]=\prev[(A|H)\cdot (B|K)]=P(A|H)P(B|K).
\]
Then, as already observed  in \cite{GiSa13a}, when $HK=\emptyset$ the conditional events $A|H$ and $B|K$ should be looked at as uncorrelated random quantities. However,  when $HK=\emptyset$, (the two random quantities)
$A|H$ and $B|K$ are (in general) not independent. Indeed, by setting $P(A|H)=x\in(0,1)$,   $P(B|K)=y\in(0,1)$, it holds that 
$A|H\in\{1,0,x\}$,  $B|K\in\{1,0,y\}$,   
$(A|H,B|K)\in\{(1,y),(0,y),(x,1),(x,0),(x,y)\}$, and
$(A|H)\cdot (B|K)\in\{0,x,y,xy\}$.
Then $A|H$ and $B|K$ are  not independent because,  by assuming for instance that $P(AH)>0$ and $P(\no{K})<1$, one has
\[
P[((A|H)=1)\wedge (B|K)=y]=P(AH\no{K})=P(AH)\neq  P(AH)P(\no{K})=P[(A|H)=1]P[(B|K)=y].
\]
\section{Conclusions}
\label{SEC:CONCLUSIONS}
In this paper, we examined Lewis's triviality proof, for what it could tell us about conditionals and iterations of them, and we studied the prevision of several iterated conditionals in the framework of coherence and conditional random quantities. 
We analyzed the antecedent-nested  conditional  $D|(C|A)$ and its negation $\no{D}|(C|A)$, by verifying  that the import-export principle does not hold. In particular,  we showed  that  $\prev[A|(C|A)]=P(A)$ and $\prev[\no{A}|(C|A)]=P(\no{A})$,  when $P(C|A)>0$. We also  proved the ordering $A|AC \leq A|(C|A) \leq A|(AC\vee \no{A})$.

Then, we examined the Sue example where the equality $\prev[A|(C|A)]=P(A)$ appears counterintuitive. 
To support the intuition that the degree of belief in  $A$ should increase, 
we introduced and studied the following  (generalized) iterated conditionals: $A|((C|A)\wedge C)$,  $A|((C|A)\wedge K)$,  $A|((C|A)\wedge (K|(C|A)))$, and 
$A|((C|A)\wedge (A|H))$. In these (generalized) iterated conditionals the respective antecedents are strengthened by additional information. This additional information can be seen as explicated latent information, which may derive   from background knowledge or  from conversational implicatures. We verified that for all these iterated conditionals, with suitably ``strengthened'' antecedents, the prevision is  greater than or equal to the probability of the consequent $A$. Thus these iterated conditionals seem  valid formalizations of different types of additional  information in the antecedent, for which it holds that  the degree of belief in $A$  increases.

Our examination of the Sue example illustrates for us a general point about the analysis of intuitions in philosophical 
thought experiments. If our intuitions are in conflict with the results of the available formal methods, it could be that 
$(i)$ the analysis requires a richer formal structure, or $(ii)$ implicit information in the thought experiment has to be made 
explicit for a correct analysis. The formal understanding of the Sue example requires both, a richer formal structure and 
the explication of implicit information.

We also deepened the study of the probabilistic propagation from the premises to the conclusion for the Affirmation of the Consequent, which is an abductive  inference form. Finally, we considered the equalities $\prev[(C|A)\wedge A]=P(C|A)P(A)$ and $\prev[(A|H)\wedge (B|K)]=P(A|H)P(B|K)$ when $HK=\emptyset$. 
Some authors see these equalities as cases of probabilistic independence, but we argued that such equalities are correctly interpreted (only) as instances of uncorrelation between two random quantities.

Lewis (\cite{lewis76}) was aware that, to retain the attractive qualities of the CPH and yet to avoid triviality, an approach something like ours would have to be developed. But he did not want himself to go down this line, because it would mean ``$\ldots$ too much of a fresh start $\ldots$'' and would burden ``$\ldots$ us with too much work to be done $\ldots$''. He specifically pointed to compounds and iterations of conditionals as serious problems for this fresh work. But we have done that work in this paper, proving that our approach implies the CPH as a natural consequence, avoids triviality, and leads to an account of compounds and iterations of conditionals.
\section*{Acknowledgments}
\noindent
We    thank    the    anonymous    reviewers    for    their    careful    reading    of    our    manuscript.  Their    many    insightful    comments    and    suggestions  were very helpful in improving this paper. Giuseppe Sanfilippo was partially supported by MIUR-FFABR 2017. Niki Pfeifer is supported by the German Federal Ministry of Education and Research, BMBF (grant 01UL1906X) and was supported by  Erasmus+.   

\begin{thebibliography}{92}
	\expandafter\ifx\csname natexlab\endcsname\relax\def\natexlab#1{#1}\fi
	\providecommand{\url}[1]{\texttt{#1}}
	\providecommand{\href}[2]{#2}
	\providecommand{\path}[1]{#1}
	\providecommand{\DOIprefix}{doi:}
	\providecommand{\ArXivprefix}{arXiv:}
	\providecommand{\URLprefix}{URL: }
	\providecommand{\Pubmedprefix}{pmid:}
	\providecommand{\doi}[1]{\href{http://dx.doi.org/#1}{\path{#1}}}
	\providecommand{\Pubmed}[1]{\href{pmid:#1}{\path{#1}}}
	\providecommand{\bibinfo}[2]{#2}
	\ifx\xfnm\relax \def\xfnm[#1]{\unskip,\space#1}\fi
	\bibitem[{Adams(1975)}]{adams75}
	\bibinfo{author}{Adams, E.W.}, \bibinfo{year}{1975}.
	\newblock \bibinfo{title}{The logic of conditionals: an application of
		probability to deductive logic}.
	\newblock \bibinfo{publisher}{Synthese Library. Springer},
	\bibinfo{address}{Dordrecht}.
	\newblock \DOIprefix\doi{https://doi.org/10.1007/978-94-015-7622-2}.
	\bibitem[{Adams(1998)}]{adams98}
	\bibinfo{author}{Adams, E.W.}, \bibinfo{year}{1998}.
	\newblock \bibinfo{title}{A primer of probability logic}.
	\newblock \bibinfo{publisher}{CSLI}, \bibinfo{address}{{S}tanford}.
	\bibitem[{Baioletti and Capotorti(2019)}]{Baioletti2019}
	\bibinfo{author}{Baioletti, M.}, \bibinfo{author}{Capotorti, A.},
	\bibinfo{year}{2019}.
	\newblock \bibinfo{title}{A {L1} based probabilistic merging algorithm and its
		application to statistical matching}.
	\newblock \bibinfo{journal}{Applied Intelligence} \bibinfo{volume}{49},
	\bibinfo{pages}{112--124}.
	\newblock \DOIprefix\doi{https://doi.org/10.1007/s10489-018-1233-z}.
	\bibitem[{Baioletti et~al.(2011)Baioletti, Coletti, Petturiti and
		Vantaggi}]{BCPV11}
	\bibinfo{author}{Baioletti, M.}, \bibinfo{author}{Coletti, G.},
	\bibinfo{author}{Petturiti, D.}, \bibinfo{author}{Vantaggi, B.},
	\bibinfo{year}{2011}.
	\newblock \bibinfo{title}{Inferential models and relevant algorithms in a
		possibilistic framework}.
	\newblock \bibinfo{journal}{Int. J. Approx. Reasoning} \bibinfo{volume}{52},
	\bibinfo{pages}{580--598}.
	\newblock \DOIprefix\doi{10.1016/j.ijar.2010.12.006}.
	\bibitem[{Baratgin et~al.(2013)Baratgin, Over and Politzer}]{baratgin13a}
	\bibinfo{author}{Baratgin, J.}, \bibinfo{author}{Over, D.E.},
	\bibinfo{author}{Politzer, G.}, \bibinfo{year}{2013}.
	\newblock \bibinfo{title}{Uncertainty and the de {F}inetti tables}.
	\newblock \bibinfo{journal}{Thinking \& Reasoning} \bibinfo{volume}{19},
	\bibinfo{pages}{308--328}.
	\newblock \URLprefix \url{https://doi.org/10.1080/13546783.2013.809018}.
	\bibitem[{Bennett(2003)}]{bennett03}
	\bibinfo{author}{Bennett, J.}, \bibinfo{year}{2003}.
	\newblock \bibinfo{title}{A philosophical guide to conditionals}.
	\newblock \bibinfo{publisher}{Oxford University Press},
	\bibinfo{address}{Oxford}.
	\newblock \DOIprefix\doi{https://doi.org/10.1093/0199258872.001.0001}.
	\bibitem[{Berti et~al.(2017)Berti, Miranda and Rigo}]{berti17}
	\bibinfo{author}{Berti, P.}, \bibinfo{author}{Miranda, E.},
	\bibinfo{author}{Rigo, P.}, \bibinfo{year}{2017}.
	\newblock \bibinfo{title}{Basic ideas underlying conglomerability and
		disintegrability}.
	\newblock \bibinfo{journal}{International Journal of Approximate Reasoning}
	\bibinfo{volume}{88}, \bibinfo{pages}{387 -- 400}.
	\newblock \DOIprefix\doi{https://doi.org/10.1016/j.ijar.2017.06.009}.
	\bibitem[{Berti et~al.(1998)Berti, Regazzini and Rigo}]{BeRR98}
	\bibinfo{author}{Berti, P.}, \bibinfo{author}{Regazzini, E.},
	\bibinfo{author}{Rigo, P.}, \bibinfo{year}{1998}.
	\newblock \bibinfo{title}{Well calibrated, coherent forecasting systems}.
	\newblock \bibinfo{journal}{Theory of Probability \& Its Applications}
	\bibinfo{volume}{42}, \bibinfo{pages}{82--102}.
	\newblock \DOIprefix\doi{https://doi.org/10.1137/S0040585X97975988}.
	\bibitem[{Biazzo and Gilio(2000)}]{biazzo00}
	\bibinfo{author}{Biazzo, V.}, \bibinfo{author}{Gilio, A.},
	\bibinfo{year}{2000}.
	\newblock \bibinfo{title}{A generalization of the fundamental theorem of de
		{F}inetti for imprecise conditional probability assessments}.
	\newblock \bibinfo{journal}{International Journal of Approximate Reasoning}
	\bibinfo{volume}{24}, \bibinfo{pages}{251--272}.
	\newblock \DOIprefix\doi{https://doi.org/10.1016/S0888-613X(00)00038-4}.
	\bibitem[{Biazzo et~al.(2005)Biazzo, Gilio, Lukasiewicz and
		Sanfilippo}]{biazzo05}
	\bibinfo{author}{Biazzo, V.}, \bibinfo{author}{Gilio, A.},
	\bibinfo{author}{Lukasiewicz, T.}, \bibinfo{author}{Sanfilippo, G.},
	\bibinfo{year}{2005}.
	\newblock \bibinfo{title}{Probabilistic logic under coherence: {C}omplexity and
		algorithms}.
	\newblock \bibinfo{journal}{Annals of Mathematics and Artificial Intelligence}
	\bibinfo{volume}{45}, \bibinfo{pages}{35--81}.
	\newblock \DOIprefix\doi{https://doi.org/10.1007/s10472-005-9005-y}.
	\bibitem[{Biazzo et~al.(2008)Biazzo, Gilio and Sanfilippo}]{BiGS08}
	\bibinfo{author}{Biazzo, V.}, \bibinfo{author}{Gilio, A.},
	\bibinfo{author}{Sanfilippo, G.}, \bibinfo{year}{2008}.
	\newblock \bibinfo{title}{Generalized coherence and connection property of
		imprecise conditional previsions}, in: \bibinfo{booktitle}{Proc. of {IPMU
			2008}}, pp. \bibinfo{pages}{907--914}.
	\newblock \URLprefix
	\url{http://www.gimac.uma.es/ipmu08/proceedings/html/120.html}.
	\bibitem[{Biazzo et~al.(2012)Biazzo, Gilio and Sanfilippo}]{BiGS12}
	\bibinfo{author}{Biazzo, V.}, \bibinfo{author}{Gilio, A.},
	\bibinfo{author}{Sanfilippo, G.}, \bibinfo{year}{2012}.
	\newblock \bibinfo{title}{Coherent conditional previsions and proper scoring
		rules}, in: \bibinfo{editor}{Greco, S.}, \bibinfo{editor}{Bouchon-Meunier,
		B.}, \bibinfo{editor}{Coletti, G.}, \bibinfo{editor}{Fedrizzi, M.},
	\bibinfo{editor}{Matarazzo, B.}, \bibinfo{editor}{Yager, R.R.} (Eds.),
	\bibinfo{booktitle}{IPMU 2012, Part IV}. \bibinfo{publisher}{Springer Berlin
		Heidelberg}. volume \bibinfo{volume}{300} of \textit{\bibinfo{series}{CCIS}},
	pp. \bibinfo{pages}{146--156}.
	\newblock \DOIprefix\doi{https://doi.org/10.1007/978-3-642-31724-8_16}.
	\bibitem[{Bouchon-Meunier et~al.(2002)Bouchon-Meunier, Coletti and
		Marsala}]{BMCM02}
	\bibinfo{author}{Bouchon-Meunier, B.}, \bibinfo{author}{Coletti, G.},
	\bibinfo{author}{Marsala, C.}, \bibinfo{year}{2002}.
	\newblock \bibinfo{title}{Independence and possibilistic conditioning}.
	\newblock \bibinfo{journal}{Ann. Math. Artif. Intell.} \bibinfo{volume}{35},
	\bibinfo{pages}{107--123}.
	\newblock \DOIprefix\doi{https://doi.org/10.1023/A:1014579015954}.
	\bibitem[{Calabrese(2017)}]{Cala17}
	\bibinfo{author}{Calabrese, P.}, \bibinfo{year}{2017}.
	\newblock \bibinfo{title}{Logic and Conditional Probability - A Synthesis}.
	\newblock \bibinfo{publisher}{{C}ollege {P}ublications},
	\bibinfo{address}{London}.
	\bibitem[{Capotorti et~al.(2007)Capotorti, Lad and Sanfilippo}]{CaLS07}
	\bibinfo{author}{Capotorti, A.}, \bibinfo{author}{Lad, F.},
	\bibinfo{author}{Sanfilippo, G.}, \bibinfo{year}{2007}.
	\newblock \bibinfo{title}{Reassessing accuracy rates of median decisions}.
	\newblock \bibinfo{journal}{The American Statistician} \bibinfo{volume}{61},
	\bibinfo{pages}{132--138}.
	\newblock \DOIprefix\doi{http://dx.doi.org/10.1198/000313007X190943}.
	\bibitem[{Coletti and Scozzafava(1996)}]{cosco96}
	\bibinfo{author}{Coletti, G.}, \bibinfo{author}{Scozzafava, R.},
	\bibinfo{year}{1996}.
	\newblock \bibinfo{title}{Characterization of coherent conditional
		probabilities as a tool for their assessment and estension}.
	\newblock \bibinfo{journal}{International Journal of Uncertainty, Fuzziness and
		Knowledge-Based Systems} \bibinfo{volume}{04}, \bibinfo{pages}{103--127}.
	\newblock \DOIprefix\doi{http://dx.doi.org/10.1142/S021848859600007X}.
	\bibitem[{Coletti and Scozzafava(1999)}]{CoSc99a}
	\bibinfo{author}{Coletti, G.}, \bibinfo{author}{Scozzafava, R.},
	\bibinfo{year}{1999}.
	\newblock \bibinfo{title}{Conditioning and inference in intelligent systems}.
	\newblock \bibinfo{journal}{Soft Computing} \bibinfo{volume}{3},
	\bibinfo{pages}{118--130}.
	\newblock \DOIprefix\doi{https://doi.org/10.1007/s005000050060}.
	\bibitem[{Coletti and Scozzafava(2001)}]{CoSc01}
	\bibinfo{author}{Coletti, G.}, \bibinfo{author}{Scozzafava, R.},
	\bibinfo{year}{2001}.
	\newblock \bibinfo{title}{From conditional events to conditional measures: A
		new axiomatic approach}.
	\newblock \bibinfo{journal}{Ann. Math. Artif. Intell.} \bibinfo{volume}{32},
	\bibinfo{pages}{373--392}.
	\newblock \DOIprefix\doi{https://doi.org/10.1023/A:1016786121626}.
	\bibitem[{Coletti and Scozzafava(2002)}]{coletti02}
	\bibinfo{author}{Coletti, G.}, \bibinfo{author}{Scozzafava, R.},
	\bibinfo{year}{2002}.
	\newblock \bibinfo{title}{Probabilistic logic in a coherent setting}.
	\newblock \bibinfo{publisher}{Kluwer}, \bibinfo{address}{Dordrecht}.
	\newblock \DOIprefix\doi{http://dx.doi.org/10.1007/978-94-010-0474-9}.
	\bibitem[{Coletti et~al.(2015)Coletti, Scozzafava and Vantaggi}]{coletti15}
	\bibinfo{author}{Coletti, G.}, \bibinfo{author}{Scozzafava, R.},
	\bibinfo{author}{Vantaggi, B.}, \bibinfo{year}{2015}.
	\newblock \bibinfo{title}{Possibilistic and probabilistic logic under
		coherence: {D}efault reasoning and {S}ystem {P}}.
	\newblock \bibinfo{journal}{Mathematica Slovaca} \bibinfo{volume}{65},
	\bibinfo{pages}{863--890}.
	\newblock \DOIprefix\doi{https://doi.org/10.1515/ms-2015-0060}.
	\bibitem[{Cruz and Oberauer(2014)}]{cruz14}
	\bibinfo{author}{Cruz, N.}, \bibinfo{author}{Oberauer, K.},
	\bibinfo{year}{2014}.
	\newblock \bibinfo{title}{Comparing the meaning of `if' and `all'}.
	\newblock \bibinfo{journal}{Memory \& Cognition} \bibinfo{volume}{42},
	\bibinfo{pages}{1345--1356}.
	\newblock \DOIprefix\doi{https://doi.org/10.3758/s13421-014-0442-x}.
	\bibitem[{{de~Finetti}(1931)}]{definetti31}
	\bibinfo{author}{{de~Finetti}, B.}, \bibinfo{year}{1931}.
	\newblock \bibinfo{title}{Sul significato soggettivo della probabilit\'a}.
	\newblock \bibinfo{journal}{Fundamenta Mathematicae} \bibinfo{volume}{17},
	\bibinfo{pages}{298--329}.
	\bibitem[{{de~Finetti}(1937/1980)}]{definetti37}
	\bibinfo{author}{{de~Finetti}, B.}, \bibinfo{year}{1937/1980}.
	\newblock \bibinfo{title}{Foresight: {I}ts logical laws, its subjective
		sources}, in: \bibinfo{booktitle}{Studies in subjective probability}.
	\bibinfo{publisher}{Krieger}, \bibinfo{address}{Huntington}, pp.
	\bibinfo{pages}{55--118}.
	\bibitem[{Douven(2011)}]{douven10}
	\bibinfo{author}{Douven, I.}, \bibinfo{year}{2011}.
	\newblock \bibinfo{title}{Indicative conditionals}, in:
	\bibinfo{editor}{Pettigrew, R.}, \bibinfo{editor}{Horsten, L.} (Eds.),
	\bibinfo{booktitle}{Continuum companion to philosophical logic}.
	\bibinfo{publisher}{Continuum Press}, \bibinfo{address}{London, New York},
	pp. \bibinfo{pages}{383--405}.
	\bibitem[{Douven(2016)}]{douven16}
	\bibinfo{author}{Douven, I.}, \bibinfo{year}{2016}.
	\newblock \bibinfo{title}{The epistemology of indicative conditionals: {F}ormal
		and empirical approaches}.
	\newblock \bibinfo{publisher}{Cambridge University Press},
	\bibinfo{address}{Cambridge}.
	\newblock \DOIprefix\doi{https://doi.org/10.1017/CBO9781316275962}.
	\bibitem[{Douven(2017)}]{sep-abduction}
	\bibinfo{author}{Douven, I.}, \bibinfo{year}{2017}.
	\newblock \bibinfo{title}{Abduction}, in: \bibinfo{editor}{Zalta, E.N.} (Ed.),
	\bibinfo{booktitle}{The Stanford Encyclopedia of Philosophy}.
	\bibinfo{edition}{summer 2017} ed.
	\newblock \URLprefix
	\url{https://plato.stanford.edu/archives/sum2017/entries/abduction}.
	\bibitem[{Douven and Dietz(2011)}]{douven11b}
	\bibinfo{author}{Douven, I.}, \bibinfo{author}{Dietz, R.},
	\bibinfo{year}{2011}.
	\newblock \bibinfo{title}{A puzzle about {S}talnaker's hypothesis}.
	\newblock \bibinfo{journal}{Topoi} ,
	\bibinfo{pages}{31--37}\DOIprefix\doi{https://doi.org/10.1007/s11245-010-9082-3}.
	\bibitem[{Dubois et~al.(2008)Dubois, Gilio and Kern-Isberner}]{DuGK08}
	\bibinfo{author}{Dubois, D.}, \bibinfo{author}{Gilio, A.},
	\bibinfo{author}{Kern-Isberner, G.}, \bibinfo{year}{2008}.
	\newblock \bibinfo{title}{Probabilistic abduction without priors}.
	\newblock \bibinfo{journal}{International Journal of Approximate Reasoning}
	\bibinfo{volume}{47}, \bibinfo{pages}{333 -- 351}.
	\newblock \DOIprefix\doi{https://doi.org/10.1016/j.ijar.2007.05.012}.
	\bibitem[{Edgington(1995)}]{edgington95}
	\bibinfo{author}{Edgington, D.}, \bibinfo{year}{1995}.
	\newblock \bibinfo{title}{On conditionals}.
	\newblock \bibinfo{journal}{Mind} \bibinfo{volume}{104},
	\bibinfo{pages}{235--329}.
	\bibitem[{Elqayam and Over(2013)}]{elqayam13}
	\bibinfo{author}{Elqayam, S.}, \bibinfo{author}{Over, D.E.},
	\bibinfo{year}{2013}.
	\newblock \bibinfo{title}{New paradigm psychology of reasoning: {A}n
		introduction to the special issue edited by {S.} {E}lqayam, {J. F.} bonnefon,
		\& {D. E.} over}.
	\newblock \bibinfo{journal}{Thinking \& Reasoning} \bibinfo{volume}{19},
	\bibinfo{pages}{249--265}.
	\newblock \DOIprefix\doi{https://doi.org/10.1080/13546783.2013.841591}.
	\bibitem[{Evans et~al.(2003)Evans, Handley and Over}]{evans03}
	\bibinfo{author}{Evans, J.{\relax St}.B.T.}, \bibinfo{author}{Handley, S.J.},
	\bibinfo{author}{Over, D.E.}, \bibinfo{year}{2003}.
	\newblock \bibinfo{title}{Conditionals and conditional probability}.
	\newblock \bibinfo{journal}{JEP:LMC} \bibinfo{volume}{29},
	\bibinfo{pages}{321--355}.
	\newblock \DOIprefix\doi{https://doi.org/10.1037/0278-7393.29.2.321}.
	\bibitem[{Fine(1973)}]{fine73}
	\bibinfo{author}{Fine, T.L.}, \bibinfo{year}{1973}.
	\newblock \bibinfo{title}{Theories of probabilities. {A}n examination of
		foundations}.
	\newblock \bibinfo{publisher}{Academic Press}, \bibinfo{address}{New York}.
	\bibitem[{Flaminio et~al.(2017)Flaminio, Godo and Hosni}]{Godo17}
	\bibinfo{author}{Flaminio, T.}, \bibinfo{author}{Godo, L.},
	\bibinfo{author}{Hosni, H.}, \bibinfo{year}{2017}.
	\newblock \bibinfo{title}{On boolean algebras of conditionals and their logical
		counterpart}, in: \bibinfo{editor}{Antonucci, A.}, \bibinfo{editor}{Cholvy,
		L.}, \bibinfo{editor}{Papini, O.} (Eds.), \bibinfo{booktitle}{ECSQARU 2017}.
	\bibinfo{publisher}{Springer International Publishing}. volume
	\bibinfo{volume}{10369} of \textit{\bibinfo{series}{LNCS}}, pp.
	\bibinfo{pages}{246--256}.
	\newblock \DOIprefix\doi{https://doi.org/10.1007/978-3-319-61581-3_23}.
	\bibitem[{Flaminio et~al.(2018)Flaminio, Hosni and Montagna}]{flaminio18}
	\bibinfo{author}{Flaminio, T.}, \bibinfo{author}{Hosni, H.},
	\bibinfo{author}{Montagna, F.}, \bibinfo{year}{2018}.
	\newblock \bibinfo{title}{Strict coherence on many-valued events}.
	\newblock \bibinfo{journal}{The Journal of Symbolic Logic}
	\bibinfo{volume}{83}, \bibinfo{pages}{55–69}.
	\newblock \DOIprefix\doi{https://doi.org/10.1017/jsl.2017.34}.
	\bibitem[{Fugard et~al.(2011a)Fugard, Pfeifer, Mayerhofer and
		Kleiter}]{fugard11a}
	\bibinfo{author}{Fugard, A.}, \bibinfo{author}{Pfeifer, N.},
	\bibinfo{author}{Mayerhofer, B.}, \bibinfo{author}{Kleiter, G.D.},
	\bibinfo{year}{2011a}.
	\newblock \bibinfo{title}{How people interpret conditionals: {S}hifts towards
		the conditional event}.
	\newblock \bibinfo{journal}{JEP:LMC} \bibinfo{volume}{37},
	\bibinfo{pages}{635--648}.
	\newblock \DOIprefix\doi{https://doi.org/10.1037/a0022329}.
	\bibitem[{Gilio(1990)}]{gilio90}
	\bibinfo{author}{Gilio, A.}, \bibinfo{year}{1990}.
	\newblock \bibinfo{title}{Criterio di penalizzazione e condizioni di coerenza
		nella valutazione soggettiva della probabilit{\`a}}.
	\newblock \bibinfo{journal}{Bollettino dell'Unione Matematica Italiana}
	\bibinfo{volume}{4--B}, \bibinfo{pages}{645--660}.
	\bibitem[{Gilio(2002)}]{gilio02}
	\bibinfo{author}{Gilio, A.}, \bibinfo{year}{2002}.
	\newblock \bibinfo{title}{Probabilistic {R}easoning {U}nder {C}oherence in
		{S}ystem {P}}.
	\newblock \bibinfo{journal}{Annals of Mathematics and Artificial Intelligence}
	\bibinfo{volume}{34}, \bibinfo{pages}{5--34}.
	\newblock \DOIprefix\doi{https://doi.org/10.1023/A:1014422615720}.
	\bibitem[{Gilio(2012)}]{gilio12ijar}
	\bibinfo{author}{Gilio, A.}, \bibinfo{year}{2012}.
	\newblock \bibinfo{title}{Generalizing inference rules in a coherence-based
		probabilistic default reasoning}.
	\newblock \bibinfo{journal}{International Journal of Approximate Reasoning}
	\bibinfo{volume}{53}, \bibinfo{pages}{413--434}.
	\newblock \DOIprefix\doi{https://doi.org/10.1016/j.ijar.2011.08.004}.
	\bibitem[{Gilio and Over(2012)}]{gilio12}
	\bibinfo{author}{Gilio, A.}, \bibinfo{author}{Over, D.E.},
	\bibinfo{year}{2012}.
	\newblock \bibinfo{title}{The psychology of inferring conditionals from
		disjunctions: {A} probabilistic study}.
	\newblock \bibinfo{journal}{Journal of Mathematical Psychology}
	\bibinfo{volume}{56}, \bibinfo{pages}{118--131}.
	\newblock \DOIprefix\doi{https://doi.org/10.1016/j.jmp.2012.02.006}.
	\bibitem[{Gilio et~al.(2017)Gilio, Over, Pfeifer and Sanfilippo}]{GOPS16}
	\bibinfo{author}{Gilio, A.}, \bibinfo{author}{Over, D.E.},
	\bibinfo{author}{Pfeifer, N.}, \bibinfo{author}{Sanfilippo, G.},
	\bibinfo{year}{2017}.
	\newblock \bibinfo{title}{Centering and compound conditionals under coherence},
	in: \bibinfo{editor}{Ferraro, M.B.}, \bibinfo{editor}{Giordani, P.},
	\bibinfo{editor}{Vantaggi, B.}, \bibinfo{editor}{Gagolewski, M.},
	\bibinfo{editor}{Gil, M.{\'A}.}, \bibinfo{editor}{Grzegorzewski, P.},
	\bibinfo{editor}{Hryniewicz, O.} (Eds.), \bibinfo{booktitle}{Soft Methods for
		Data Science}. \bibinfo{publisher}{Springer}, \bibinfo{address}{Cham}. volume
	\bibinfo{volume}{456} of \textit{\bibinfo{series}{Advances in Intelligent
			Systems and Computing}}, pp. \bibinfo{pages}{253--260}.
	\newblock \DOIprefix\doi{https://doi.org/10.1007/978-3-319-42972-4_32}.
	\bibitem[{Gilio et~al.(2016)Gilio, Pfeifer and Sanfilippo}]{gilio16}
	\bibinfo{author}{Gilio, A.}, \bibinfo{author}{Pfeifer, N.},
	\bibinfo{author}{Sanfilippo, G.}, \bibinfo{year}{2016}.
	\newblock \bibinfo{title}{Transitivity in coherence-based probability logic}.
	\newblock \bibinfo{journal}{Journal of Applied Logic} \bibinfo{volume}{14},
	\bibinfo{pages}{46--64}.
	\newblock \DOIprefix\doi{https://doi.org/10.1016/j.jal.2015.09.012}.
	\bibitem[{Gilio et~al.(In press)Gilio, Pfeifer and Sanfilippo}]{GiPS18wp}
	\bibinfo{author}{Gilio, A.}, \bibinfo{author}{Pfeifer, N.},
	\bibinfo{author}{Sanfilippo, G.}, \bibinfo{year}{In press}.
	\newblock \bibinfo{title}{Probabilistic entailment and iterated conditionals},
	in: \bibinfo{editor}{Elqayam, S.}, \bibinfo{editor}{Douven, I.},
	\bibinfo{editor}{Evans, J.}, \bibinfo{editor}{Cruz, N.} (Eds.),
	\bibinfo{booktitle}{Logic and {U}ncertainty in the {H}uman {M}ind: A
		{T}ribute to {D}avid {O}ver}. \bibinfo{publisher}{Routledge}.
	\newblock \URLprefix
	\url{https://www.routledge.com/Logic-and-Uncertainty-in-the-Human-Mind-A-Tribute-to-David-E-Over/Elqayam-Douven-Evans-Cruz/p/book/9781138084254}.
	\bibitem[{Gilio and Sanfilippo(2011)}]{GiSa11a}
	\bibinfo{author}{Gilio, A.}, \bibinfo{author}{Sanfilippo, G.},
	\bibinfo{year}{2011}.
	\newblock \bibinfo{title}{Coherent conditional probabilities and proper scoring
		rules}, in: \bibinfo{editor}{Coolen, F.}, \bibinfo{editor}{{De~Cooman}, G.},
	\bibinfo{editor}{Fetz, T.}, \bibinfo{editor}{Oberguggenberger, M.} (Eds.),
	\bibinfo{booktitle}{Proc. of ISIPTA' 11}, \bibinfo{publisher}{SIPTA},
	\bibinfo{address}{Innsburck}. pp. \bibinfo{pages}{189--198}.
	\newblock \URLprefix \url{http://www.sipta.org/isipta11/proceedings/043.html}.
	\bibitem[{Gilio and Sanfilippo(2013a)}]{GiSa13c}
	\bibinfo{author}{Gilio, A.}, \bibinfo{author}{Sanfilippo, G.},
	\bibinfo{year}{2013}a.
	\newblock \bibinfo{title}{Conditional random quantities and iterated
		conditioning in the setting of coherence}, in: \bibinfo{editor}{van~der Gaag,
		L.C.} (Ed.), \bibinfo{booktitle}{ECSQARU 2013}.
	\bibinfo{publisher}{Springer}, \bibinfo{address}{Berlin, Heidelberg}. volume
	\bibinfo{volume}{7958} of \textit{\bibinfo{series}{LNCS}}, pp.
	\bibinfo{pages}{218--229}.
	\newblock \DOIprefix\doi{https://doi.org/10.1007/978-3-642-39091-3_19}.
	\bibitem[{Gilio and Sanfilippo(2013b)}]{GiSa13a}
	\bibinfo{author}{Gilio, A.}, \bibinfo{author}{Sanfilippo, G.},
	\bibinfo{year}{2013}b.
	\newblock \bibinfo{title}{{C}onjunction, disjunction and iterated conditioning
		of conditional events}, in: \bibinfo{booktitle}{Synergies of Soft Computing
		and Statistics for Intelligent Data Analysis}. \bibinfo{publisher}{Springer},
	\bibinfo{address}{Berlin}. volume \bibinfo{volume}{190} of
	\textit{\bibinfo{series}{AISC}}, pp. \bibinfo{pages}{399--407}.
	\newblock \DOIprefix\doi{https://doi.org/10.1007/978-3-642-33042-1_43}.
	\bibitem[{Gilio and Sanfilippo(2013c)}]{gilio13ijar}
	\bibinfo{author}{Gilio, A.}, \bibinfo{author}{Sanfilippo, G.},
	\bibinfo{year}{2013}c.
	\newblock \bibinfo{title}{Probabilistic entailment in the setting of coherence:
		The role of quasi conjunction and inclusion relation}.
	\newblock \bibinfo{journal}{International Journal of Approximate Reasoning}
	\bibinfo{volume}{54}, \bibinfo{pages}{513--525}.
	\newblock \DOIprefix\doi{https://doi.org/10.1016/j.ijar.2012.11.001}.
	\bibitem[{Gilio and Sanfilippo(2013d)}]{gilio13ins}
	\bibinfo{author}{Gilio, A.}, \bibinfo{author}{Sanfilippo, G.},
	\bibinfo{year}{2013}d.
	\newblock \bibinfo{title}{Quasi conjunction, quasi disjunction, t-norms and
		t-conorms: {P}robabilistic aspects}.
	\newblock \bibinfo{journal}{Information Sciences} \bibinfo{volume}{245},
	\bibinfo{pages}{146--167}.
	\newblock \DOIprefix\doi{https://doi.org/10.1016/j.ins.2013.03.019}.
	\bibitem[{Gilio and Sanfilippo(2014)}]{GiSa14}
	\bibinfo{author}{Gilio, A.}, \bibinfo{author}{Sanfilippo, G.},
	\bibinfo{year}{2014}.
	\newblock \bibinfo{title}{Conditional random quantities and compounds of
		conditionals}.
	\newblock \bibinfo{journal}{Studia Logica} \bibinfo{volume}{102},
	\bibinfo{pages}{709--729}.
	\newblock \DOIprefix\doi{10.1007/s11225-013-9511-6}.
	\bibitem[{Gilio and Sanfilippo(2017)}]{GiSa17}
	\bibinfo{author}{Gilio, A.}, \bibinfo{author}{Sanfilippo, G.},
	\bibinfo{year}{2017}.
	\newblock \bibinfo{title}{Conjunction and disjunction among conditional
		events}, in: \bibinfo{editor}{Benferhat, S.}, \bibinfo{editor}{Tabia, K.},
	\bibinfo{editor}{Ali, M.} (Eds.), \bibinfo{booktitle}{IEA/AIE 2017, Part II}.
	\bibinfo{publisher}{Springer International Publishing},
	\bibinfo{address}{Cham}. volume \bibinfo{volume}{10351} of
	\textit{\bibinfo{series}{LNCS}}, pp. \bibinfo{pages}{85--96}.
	\newblock \DOIprefix\doi{https://doi.org/10.1007/978-3-319-60045-1_11}.
	\bibitem[{Gilio and Sanfilippo(2019a)}]{GiSa19b}
	\bibinfo{author}{Gilio, A.}, \bibinfo{author}{Sanfilippo, G.},
	\bibinfo{year}{2019}a.
	\newblock \bibinfo{title}{Conjunction of conditional events and t-norms}, in:
	\bibinfo{editor}{Kern-Isberner, G.}, \bibinfo{editor}{Ognjanovi\'c, Z.}
	(Eds.), \bibinfo{booktitle}{{ECSQARU 2019}}. \bibinfo{publisher}{Springer},
	\bibinfo{address}{Cham}. LNCS, pp. \bibinfo{pages}{199--211}.
	\newblock \DOIprefix\doi{http://dx.doi.org/10.1007/978-3-030-29765-7_17}.
	\bibitem[{Gilio and Sanfilippo(2019b)}]{GiSa19}
	\bibinfo{author}{Gilio, A.}, \bibinfo{author}{Sanfilippo, G.},
	\bibinfo{year}{2019}b.
	\newblock \bibinfo{title}{Generalized logical operations among conditional
		events}.
	\newblock \bibinfo{journal}{Applied Intelligence} \bibinfo{volume}{49},
	\bibinfo{pages}{79--102}.
	\newblock \DOIprefix\doi{https://doi.org/10.1007/s10489-018-1229-8}.
	\bibitem[{Gilio and Sanfilippo(submitted)}]{GiSa19PG}
	\bibinfo{author}{Gilio, A.}, \bibinfo{author}{Sanfilippo, G.},
	\bibinfo{year}{submitted}.
	\newblock \bibinfo{title}{Algebraic aspects and coherence conditions for
		conjoined and disjoined conditionals}.
	\newblock \bibinfo{note}{\url{https://arxiv.org/abs/1909.11996}}.
	\bibitem[{Goodman and Nguyen(1988)}]{GoNg88}
	\bibinfo{author}{Goodman, I.R.}, \bibinfo{author}{Nguyen, H.T.},
	\bibinfo{year}{1988}.
	\newblock \bibinfo{title}{{Conditional Objects and the Modeling of
			Uncertainties}}, in: \bibinfo{editor}{Gupta, M.M.},
	\bibinfo{editor}{Yamakawa, T.} (Eds.), \bibinfo{booktitle}{Fuzzy Computing}.
	\bibinfo{publisher}{North-Holland}, pp. \bibinfo{pages}{119--138}.
	\bibitem[{Grice(1989)}]{grice89}
	\bibinfo{editor}{Grice, H.P.} (Ed.), \bibinfo{year}{1989}.
	\newblock \bibinfo{title}{Studies in the way of words}.
	\newblock \bibinfo{publisher}{Harvard University Press},
	\bibinfo{address}{Cambridge}.
	\bibitem[{Jeffrey(1991)}]{Jeff91}
	\bibinfo{author}{Jeffrey, R.}, \bibinfo{year}{1991}.
	\newblock \bibinfo{title}{Matter-of-fact conditionals}.
	\newblock \bibinfo{journal}{Proceedings of the Aristotelian Society,
		Supplementary Volume} \bibinfo{volume}{65}, \bibinfo{pages}{161--183}.
	\newblock \DOIprefix\doi{https://doi.org/10.1093/aristoteliansupp/65.1.161}.
	\bibitem[{Johnson-Laird and Byrne(1991)}]{johnsonlaird91}
	\bibinfo{author}{Johnson-Laird, P.N.}, \bibinfo{author}{Byrne, R.M.J.},
	\bibinfo{year}{1991}.
	\newblock \bibinfo{title}{Deduction}.
	\newblock \bibinfo{publisher}{Erlbaum}, \bibinfo{address}{Hillsdale}.
	\bibitem[{Kaufmann(2009)}]{kaufmann09}
	\bibinfo{author}{Kaufmann, S.}, \bibinfo{year}{2009}.
	\newblock \bibinfo{title}{Conditionals right and left: Probabilities for the
		whole family}.
	\newblock \bibinfo{journal}{Journal of Philosophical Logic}
	\bibinfo{volume}{38}, \bibinfo{pages}{1--53}.
	\newblock \DOIprefix\doi{http://dx.doi.org/10.1007/s10992-008-9088-0}.
	\bibitem[{Kneale and Kneale(1984)}]{kneale84}
	\bibinfo{author}{Kneale, W.}, \bibinfo{author}{Kneale, M.},
	\bibinfo{year}{1984}.
	\newblock \bibinfo{title}{The development of logic}.
	\newblock \bibinfo{publisher}{Clarendon Press}, \bibinfo{address}{Oxford}.
	\bibitem[{Lad(1996)}]{lad96}
	\bibinfo{author}{Lad, F.}, \bibinfo{year}{1996}.
	\newblock \bibinfo{title}{Operational subjective statistical methods: A
		mathematical, philosophical, and historical introduction}.
	\newblock \bibinfo{publisher}{Wiley}, \bibinfo{address}{New York}.
	\bibitem[{Lad and Sanfilippo(2018)}]{LS18}
	\bibinfo{author}{Lad, F.}, \bibinfo{author}{Sanfilippo, G.},
	\bibinfo{year}{2018}.
	\newblock \bibinfo{title}{Scoring alternative forecast distributions:
		completing the {K}ullback symmetric distance complex}.
	\newblock \bibinfo{journal}{Global and Local Economic Review}
	\bibinfo{volume}{22}, \bibinfo{pages}{63--90}.
	\newblock \URLprefix \url{http://www.gler.it/archivio/ISSUE/gler_22_2.pdf}.
	\bibitem[{Lad et~al.(2012)Lad, Sanfilippo and Agr\'{o}}]{LaSA12}
	\bibinfo{author}{Lad, F.}, \bibinfo{author}{Sanfilippo, G.},
	\bibinfo{author}{Agr\'{o}, G.}, \bibinfo{year}{2012}.
	\newblock \bibinfo{title}{Completing the logarithmic scoring rule for assessing
		probability distributions}.
	\newblock \bibinfo{journal}{AIP Conference Proceedings} \bibinfo{volume}{1490},
	\bibinfo{pages}{13--30}.
	\newblock \DOIprefix\doi{https://doi.org/10.1063/1.4759585}.
	\bibitem[{Lad et~al.(2015)Lad, Sanfilippo and Agr\'o}]{LSA15}
	\bibinfo{author}{Lad, F.}, \bibinfo{author}{Sanfilippo, G.},
	\bibinfo{author}{Agr\'o, G.}, \bibinfo{year}{2015}.
	\newblock \bibinfo{title}{Extropy: complementary dual of entropy}.
	\newblock \bibinfo{journal}{Statistical Science} \bibinfo{volume}{30},
	\bibinfo{pages}{40--58}.
	\newblock \DOIprefix\doi{http://dx.doi.org/10.1214/14-STS430}.
	\bibitem[{Lad et~al.(2018)Lad, Sanfilippo and Agr\'{o}}]{LaSA18}
	\bibinfo{author}{Lad, F.}, \bibinfo{author}{Sanfilippo, G.},
	\bibinfo{author}{Agr\'{o}, G.}, \bibinfo{year}{2018}.
	\newblock \bibinfo{title}{The duality of entropy/extropy, and completion of the
		{K}ullback information complex}.
	\newblock \bibinfo{journal}{Entropy} \bibinfo{volume}{20},
	\bibinfo{pages}{article number 593}.
	\newblock \DOIprefix\doi{https://doi.org/10.3390/e20080593}.
	\bibitem[{Lewis(1973)}]{lewis73}
	\bibinfo{author}{Lewis, D.}, \bibinfo{year}{1973}.
	\newblock \bibinfo{title}{Counterfactuals}.
	\newblock \bibinfo{publisher}{Blackwell}, \bibinfo{address}{Oxford}.
	\bibitem[{Lewis(1976)}]{lewis76}
	\bibinfo{author}{Lewis, D.}, \bibinfo{year}{1976}.
	\newblock \bibinfo{title}{Probabilities of conditionals and conditional
		probabilities}.
	\newblock \bibinfo{journal}{Philosophical Review} \bibinfo{volume}{85},
	\bibinfo{pages}{297--315}.
	\newblock \DOIprefix\doi{https://doi.org/10.2307/2184045}.
	\bibitem[{McGee(1985)}]{mcgee85}
	\bibinfo{author}{McGee, V.}, \bibinfo{year}{1985}.
	\newblock \bibinfo{title}{A counterexample to modus ponens}.
	\newblock \bibinfo{journal}{Journal of Philosophy} \bibinfo{volume}{82},
	\bibinfo{pages}{462--471}.
	\newblock \DOIprefix\doi{https://doi.org/10.2307/2026276}.
	\bibitem[{McGee(1989)}]{mcgee89}
	\bibinfo{author}{McGee, V.}, \bibinfo{year}{1989}.
	\newblock \bibinfo{title}{Conditional probabilities and compounds of
		conditionals}.
	\newblock \bibinfo{journal}{Philosophical Review} \bibinfo{volume}{98},
	\bibinfo{pages}{485--541}.
	\newblock \DOIprefix\doi{http://dx.doi.org/10.2307/2185116}.
	\bibitem[{Oaksford and Chater(2007)}]{oaksford07}
	\bibinfo{author}{Oaksford, M.}, \bibinfo{author}{Chater, N.},
	\bibinfo{year}{2007}.
	\newblock \bibinfo{title}{Bayesian rationality: {T}he probabilistic approach to
		human reasoning}.
	\newblock \bibinfo{publisher}{Oxford University Press},
	\bibinfo{address}{Oxford}.
	\newblock
	\DOIprefix\doi{https://doi.org/10.1093/acprof:oso/9780198524496.001.0001}.
	\bibitem[{Over and Cruz(2018)}]{OverCruz18}
	\bibinfo{author}{Over, D.E.}, \bibinfo{author}{Cruz, N.}, \bibinfo{year}{2018}.
	\newblock \bibinfo{title}{Probabilistic accounts of conditional reasoning}, in:
	\bibinfo{editor}{Macchi, L.}, \bibinfo{editor}{Bagassi, M.},
	\bibinfo{editor}{Vialem, R.} (Eds.), \bibinfo{booktitle}{International
		Handbook of Thinking and Reasoning}. \bibinfo{publisher}{Psychology Press},
	\bibinfo{address}{Hove Sussex}, pp. \bibinfo{pages}{434--450}.
	\newblock \DOIprefix\doi{https://doi.org/10.4324/9781315725697}.
	\bibitem[{Over et~al.(2007)Over, Hadjichristidis, Evans, Handley and
		Sloman}]{over07b}
	\bibinfo{author}{Over, D.E.}, \bibinfo{author}{Hadjichristidis, C.},
	\bibinfo{author}{Evans, J.{\relax St}.B.T.}, \bibinfo{author}{Handley, S.J.},
	\bibinfo{author}{Sloman, S.}, \bibinfo{year}{2007}.
	\newblock \bibinfo{title}{The probability of causal conditionals}.
	\newblock \bibinfo{journal}{Cognitive Psychology} \bibinfo{volume}{54},
	\bibinfo{pages}{62--97}.
	\newblock \DOIprefix\doi{10.1016/j.cogpsych.2006.05.002}.
	\bibitem[{Pelessoni and Vicig(2014)}]{PeVi14}
	\bibinfo{author}{Pelessoni, R.}, \bibinfo{author}{Vicig, P.},
	\bibinfo{year}{2014}.
	\newblock \bibinfo{title}{The {G}oodman–{N}guyen relation within imprecise
		probability theory}.
	\newblock \bibinfo{journal}{International Journal of Approximate Reasoning}
	\bibinfo{volume}{55}, \bibinfo{pages}{1694 -- 1707}.
	\newblock \DOIprefix\doi{https://doi.org/10.1016/j.ijar.2014.06.002}.
	\bibitem[{Petturiti and Vantaggi(2017)}]{PeVa17}
	\bibinfo{author}{Petturiti, D.}, \bibinfo{author}{Vantaggi, B.},
	\bibinfo{year}{2017}.
	\newblock \bibinfo{title}{Envelopes of conditional probabilities extending a
		strategy and a prior probability}.
	\newblock \bibinfo{journal}{International Journal of Approximate Reasoning}
	\bibinfo{volume}{81}, \bibinfo{pages}{160 -- 182}.
	\newblock \DOIprefix\doi{https://doi.org/10.1016/j.ijar.2016.11.014}.
	\bibitem[{Pfeifer(2013)}]{pfeifer13b}
	\bibinfo{author}{Pfeifer, N.}, \bibinfo{year}{2013}.
	\newblock \bibinfo{title}{The new psychology of reasoning: {A} mental
		probability logical perspective}.
	\newblock \bibinfo{journal}{Thinking \& Reasoning} \bibinfo{volume}{19},
	\bibinfo{pages}{329--345}.
	\newblock \DOIprefix\doi{https://doi.org/10.1080/13546783.2013.838189}.
	\bibitem[{Pfeifer(2014)}]{pfeifer13}
	\bibinfo{author}{Pfeifer, N.}, \bibinfo{year}{2014}.
	\newblock \bibinfo{title}{Reasoning about uncertain conditionals}.
	\newblock \bibinfo{journal}{Studia Logica} \bibinfo{volume}{102},
	\bibinfo{pages}{849--866}.
	\newblock \DOIprefix\doi{https://doi.org/10.1007/s11225-013-9505-4}.
	\bibitem[{Pfeifer(to appear)}]{pfeiferHOR20}
	\bibinfo{author}{Pfeifer, N.}, \bibinfo{year}{to appear}.
	\newblock \bibinfo{title}{Probability logic}, in: \bibinfo{editor}{Knauff, M.},
	\bibinfo{editor}{Spohn, W.} (Eds.), \bibinfo{booktitle}{Handbook of
		Rationality}. \bibinfo{publisher}{MIT Press}, \bibinfo{address}{Cambridge}.
	\bibitem[{Pfeifer and Douven(2014)}]{pfeifer12y}
	\bibinfo{author}{Pfeifer, N.}, \bibinfo{author}{Douven, I.},
	\bibinfo{year}{2014}.
	\newblock \bibinfo{title}{Formal epistemology and the new paradigm psychology
		of reasoning}.
	\newblock \bibinfo{journal}{The Review of Philosophy and Psychology}
	\bibinfo{volume}{5}, \bibinfo{pages}{199--221}.
	\newblock \DOIprefix\doi{https://doi.org/10.1007/s13164-013-0165-0}.
	\bibitem[{Pfeifer and Kleiter(2009)}]{pfeifer09b}
	\bibinfo{author}{Pfeifer, N.}, \bibinfo{author}{Kleiter, G.D.},
	\bibinfo{year}{2009}.
	\newblock \bibinfo{title}{Framing human inference by coherence based
		probability logic}.
	\newblock \bibinfo{journal}{Journal of Applied Logic} \bibinfo{volume}{7},
	\bibinfo{pages}{206--217}.
	\newblock \DOIprefix\doi{https://doi.org/10.1016/j.jal.2007.11.005}.
	\bibitem[{Pfeifer and Sanfilippo(2017)}]{PS2017}
	\bibinfo{author}{Pfeifer, N.}, \bibinfo{author}{Sanfilippo, G.},
	\bibinfo{year}{2017}.
	\newblock \bibinfo{title}{Probabilistic squares and hexagons of opposition
		under coherence}.
	\newblock \bibinfo{journal}{International Journal of Approximate Reasoning}
	\bibinfo{volume}{88}, \bibinfo{pages}{282--294}.
	\newblock \DOIprefix\doi{https://doi.org/10.1016/j.ijar.2017.05.014}.
	\bibitem[{Pfeifer and Sanfilippo(2018)}]{SUM2018PS}
	\bibinfo{author}{Pfeifer, N.}, \bibinfo{author}{Sanfilippo, G.},
	\bibinfo{year}{2018}.
	\newblock \bibinfo{title}{Probabilistic semantics for categorical syllogisms of
		{F}igure {II}}, in: \bibinfo{editor}{Ciucci, D.}, \bibinfo{editor}{Pasi, G.},
	\bibinfo{editor}{Vantaggi, B.} (Eds.), \bibinfo{booktitle}{{SUM} 2018}.
	\bibinfo{publisher}{Springer International Publishing},
	\bibinfo{address}{Cham}. volume \bibinfo{volume}{11142} of
	\textit{\bibinfo{series}{LNCS}}, pp. \bibinfo{pages}{196--211}.
	\newblock \DOIprefix\doi{https://doi.org/10.1007/978-3-030-00461-3_14}.
	\bibitem[{Pfeifer and Sanfilippo(2019)}]{PfSa19}
	\bibinfo{author}{Pfeifer, N.}, \bibinfo{author}{Sanfilippo, G.},
	\bibinfo{year}{2019}.
	\newblock \bibinfo{title}{Probability propagation in selected {A}ristotelian
		syllogisms}, in: \bibinfo{editor}{Kern-Isberner, G.},
	\bibinfo{editor}{Ognjanovi\'c, Z.} (Eds.), \bibinfo{booktitle}{{ECSQARU
			2019}}. \bibinfo{publisher}{Springer International Publishing}. volume
	\bibinfo{volume}{11726} of \textit{\bibinfo{series}{LNCS}}, pp.
	\bibinfo{pages}{419--431}.
	\newblock \DOIprefix\doi{https://doi.org/10.1007/978-3-030-29765-7_35}.
	\bibitem[{{Pfeifer} and {Tulkki}(2017)}]{pfeifertulkkicogsci17}
	\bibinfo{author}{{Pfeifer}, N.}, \bibinfo{author}{{Tulkki}, L.},
	\bibinfo{year}{2017}.
	\newblock \bibinfo{title}{{Abductive, causal, and counterfactual conditionals
			under incomplete probabilistic knowledge}}, in: \bibinfo{editor}{Gunzelmann,
		G.}, \bibinfo{editor}{Howes, A.}, \bibinfo{editor}{Tenbrink, T.},
	\bibinfo{editor}{Davelaar, E.} (Eds.), \bibinfo{booktitle}{Proceedings of the
		39\textsuperscript{th} {C}ognitive {S}cience {S}ociety {M}eeting},
	\bibinfo{publisher}{The Cognitive Science Society}, \bibinfo{address}{Austin,
		TX}. pp. \bibinfo{pages}{2888--2893}.
	\bibitem[{Pfeifer and Tulkki(2017)}]{pfeifertulkki17}
	\bibinfo{author}{Pfeifer, N.}, \bibinfo{author}{Tulkki, L.},
	\bibinfo{year}{2017}.
	\newblock \bibinfo{title}{Conditionals, counterfactuals, and rational
		reasoning. {A}n experimental study on basic principles}.
	\newblock \bibinfo{journal}{Minds and Machines} \bibinfo{volume}{27},
	\bibinfo{pages}{119--165}.
	\newblock \DOIprefix\doi{https://doi.org/10.1007/s11023-017-9425-6}.
	\bibitem[{Ramsey(1926/1978)}]{ramsey26}
	\bibinfo{author}{Ramsey, F.P.}, \bibinfo{year}{1926/1978}.
	\newblock \bibinfo{title}{Truth and probability (1926)}, in:
	\bibinfo{editor}{Mellor, D.H.} (Ed.), \bibinfo{booktitle}{Foundations.
		{E}ssays in philosophy, logic, mathematics and economics}.
	\bibinfo{publisher}{Routledge \& Kegan Paul}, \bibinfo{address}{London}, pp.
	\bibinfo{pages}{58--100}.
	\bibitem[{Ramsey(1929/1994)}]{ramsey94}
	\bibinfo{author}{Ramsey, F.P.}, \bibinfo{year}{1929/1994}.
	\newblock \bibinfo{title}{General propositions and causality (1929)}, in:
	\bibinfo{editor}{Mellor, D.H.} (Ed.), \bibinfo{booktitle}{Philosophical
		Papers by {F. P. R}amsey}. \bibinfo{publisher}{Cambridge University Press},
	\bibinfo{address}{Cambridge}, pp. \bibinfo{pages}{145--163}.
	\bibitem[{Sanfilippo(2018)}]{SUM2018S}
	\bibinfo{author}{Sanfilippo, G.}, \bibinfo{year}{2018}.
	\newblock \bibinfo{title}{Lower and upper probability bounds for some
		conjunctions of two conditional events}, in: \bibinfo{editor}{Ciucci, D.},
	\bibinfo{editor}{Pasi, G.}, \bibinfo{editor}{Vantaggi, B.} (Eds.),
	\bibinfo{booktitle}{{SUM} 2018}. \bibinfo{publisher}{Springer International
		Publishing}, \bibinfo{address}{Cham}. volume \bibinfo{volume}{11142} of
	\textit{\bibinfo{series}{LNCS}}, pp. \bibinfo{pages}{260--275}.
	\newblock \DOIprefix\doi{https://doi.org/10.1007/978-3-030-00461-3_18}.
	\bibitem[{Sanfilippo et~al.(2017)Sanfilippo, Pfeifer and Gilio}]{ECSQARU17}
	\bibinfo{author}{Sanfilippo, G.}, \bibinfo{author}{Pfeifer, N.},
	\bibinfo{author}{Gilio, A.}, \bibinfo{year}{2017}.
	\newblock \bibinfo{title}{Generalized probabilistic modus ponens}, in:
	\bibinfo{editor}{Antonucci, A.}, \bibinfo{editor}{Cholvy, L.},
	\bibinfo{editor}{Papini, O.} (Eds.), \bibinfo{booktitle}{ECSQARU 2017}.
	\bibinfo{publisher}{Springer International Publishing}. volume
	\bibinfo{volume}{10369} of \textit{\bibinfo{series}{LNCS}}, pp.
	\bibinfo{pages}{480--490}.
	\newblock \DOIprefix\doi{https://doi.org/10.1007/978-3-319-61581-3_43}.
	\bibitem[{Sanfilippo et~al.(2018)Sanfilippo, Pfeifer, Over and Gilio}]{SPOG18}
	\bibinfo{author}{Sanfilippo, G.}, \bibinfo{author}{Pfeifer, N.},
	\bibinfo{author}{Over, D.}, \bibinfo{author}{Gilio, A.},
	\bibinfo{year}{2018}.
	\newblock \bibinfo{title}{Probabilistic inferences from conjoined to iterated
		conditionals}.
	\newblock \bibinfo{journal}{International Journal of Approximate Reasoning}
	\bibinfo{volume}{93}, \bibinfo{pages}{103--118}.
	\newblock \DOIprefix\doi{https://doi.org/10.1016/j.ijar.2017.10.027}.
	\bibitem[{Skovgaard-Olsen et~al.(2016)Skovgaard-Olsen, Singmann and
		Klauer}]{skovgaard16}
	\bibinfo{author}{Skovgaard-Olsen, N.}, \bibinfo{author}{Singmann, H.},
	\bibinfo{author}{Klauer, K.C.}, \bibinfo{year}{2016}.
	\newblock \bibinfo{title}{The relevance effect and conditionals}.
	\newblock \bibinfo{journal}{Cognition} \bibinfo{volume}{150},
	\bibinfo{pages}{26--36}.
	\newblock \DOIprefix\doi{https://doi.org/10.1016/j.cognition.2015.12.017}.
	\bibitem[{Stalnaker(1968)}]{stalnaker68}
	\bibinfo{author}{Stalnaker, R.}, \bibinfo{year}{1968}.
	\newblock \bibinfo{title}{A theory of conditionals}, in:
	\bibinfo{editor}{Rescher, N.} (Ed.), \bibinfo{booktitle}{Studies in logical
		theory}. \bibinfo{publisher}{Blackwell}, \bibinfo{address}{Oxford}, pp.
	\bibinfo{pages}{98--112}.
	\newblock \DOIprefix\doi{https://doi.org/10.1007/978-94-009-9117-0_2}.
	\bibitem[{Stalnaker(1970)}]{stalnaker70}
	\bibinfo{author}{Stalnaker, R.}, \bibinfo{year}{1970}.
	\newblock \bibinfo{title}{Probability and conditionals}.
	\newblock \bibinfo{journal}{Philosophy of Science} \bibinfo{volume}{37},
	\bibinfo{pages}{64--80}.
	\newblock \DOIprefix\doi{https://doi.org/10.1007/978-94-010-1853-1_10}.
	\bibitem[{{van Fraassen}(1976)}]{vanfraassen76}
	\bibinfo{author}{{van Fraassen}, B.}, \bibinfo{year}{1976}.
	\newblock \bibinfo{title}{Probabilities of conditionals}, in:
	\bibinfo{editor}{Harper, W.L.}, \bibinfo{editor}{Hooker, C.A.} (Eds.),
	\bibinfo{booktitle}{Foundations of Probability Theory, Statistical Inference,
		and Statistical Theories of Science, I}. \bibinfo{publisher}{Reidel},
	\bibinfo{address}{Dordrecht}, pp. \bibinfo{pages}{261--308}.
	\newblock \DOIprefix\doi{https://doi.org/10.1007/978-94-010-1853-1_10}.
	\bibitem[{van Wijnbergen-Huitink et~al.(2015)van Wijnbergen-Huitink, Elqayam
		and Over}]{Over2015}
	\bibinfo{author}{van Wijnbergen-Huitink, J.}, \bibinfo{author}{Elqayam, S.},
	\bibinfo{author}{Over, D.}, \bibinfo{year}{2015}.
	\newblock \bibinfo{title}{The probability of iterated conditionals}.
	\newblock \bibinfo{journal}{Cognitive Science} \bibinfo{volume}{39},
	\bibinfo{pages}{788--803}.
	\newblock \DOIprefix\doi{https://doi.org/10.1111/cogs.12169}.
	
\end{thebibliography}

\end{document}